\numberwithin{figure}{section}
\newcommand{\Aset}{\mathcal{A}}
\newcommand{\Bset}{\mathcal{B}}
\newcommand{\Cset}{\mathcal{C}}
\newcommand{\Xset}{\mathcal{X}}
\newcommand{\Yset}{\mathcal{Y}}
\newcommand{\Zset}{\mathcal{Z}}
\newtheorem{thm}{Theorem}[section]
\newtheorem{cor}{Corollary}[section]
\newtheorem*{cor*}{Corollary}
\newtheorem{defn}[thm]{Definition}
\newtheorem{rem}[thm]{\bf{Remark}}
\newtheorem{proposition}[thm]{Proposition}
\newtheorem{lem}[thm]{Lemma}
\begin{document}


\title{Sets of lines passing through one point each of three sets
of points}

\title{\Large{\bf On \ plane \ algebraic curves \ passing \ through\\ $n$-independent \ nodes}}
\author{H. A. Hakopian, \ H. M. Kloyan, \ D. S. Voskanyan \\ \\
\emph{Department of Informatics and Applied Mathematics,}\\  \emph{Yerevan State University}}

\date{}

\maketitle

\begin{abstract}
 Let a set of nodes $\mathcal X$ in the plane be $n$-independent, i.e., each node has a fundamental polynomial of degree $n.$ Assume that\\
$\#\mathcal X=d(n,k-3)+3= (n+1)+n+\cdots+(n-k+5)+3$ and $4 \le k\le n-1.$ In this paper we prove that there are at most seven linearly independent curves of degree less than or equal to $k$ that pass through all the nodes of $\mathcal X.$ We provide a characterization of the case when there are exactly seven such curves. Namely, we prove that then the set $\mathcal X$ has a very special construction: all its nodes but three belong to a (maximal) curve of degree $k-3.$ Let us mention that in a series of such results this is the third one. At the end an important application to the bivariate polynomial interpolation is provided, which is essential also for the study of the Gasca-Maeztu conjecture.
\vskip 4pt
\textbf{MSC2010:}  14H50, 41A05, 41A63.
\vskip 4pt
\textbf{\textit{Keywords}:} algebraic curves,  maximal curves,  bivariate polynomial\\ interpolation, fundamental polynomial, $n$-independent nodes.
\end{abstract}

\parindent=1cm

\section{Introduction}

 Denote the space of all bivariate polynomials of total degree not exceeding $n$ by $$\Pi_n= \left\{\sum_{i+j\leq n} a_{ij} x^i y^j \right\}.$$
We have that
\begin{equation*}
  N := N_n := \dim \Pi_n = (1/2)(n+1)(n+2).
\end{equation*}
Denote by $\Pi$ the space of all bivariate polynomials.

Consider a set of $s$ distinct nodes ${\mathcal X}= {\mathcal X}_s = \{(x_1,y_1), (x_2,y_2), \ldots, (x_s,y_s) \}.$
 The problem of finding a polynomial $p \in \Pi_n,$ which satisfies the conditions
\begin{equation}\label{eq:intpr}
  p(x_i,y_i) = c_i, \quad i=1, \ldots, s,
\end{equation}
is called interpolation problem.

A polynomial $p \in \Pi_n$ is called a fundamental polynomial for a node \linebreak$A\in{\mathcal X}$ if
$p(A)=1\ \ \text{and}\ \ p\big\vert_{{\mathcal X}\setminus\{A\}} = 0,$ where $p\big\vert_{\mathcal X}$ means the restriction of $p$ on $\mathcal X.$ We denote this $n$-fundamental polynomial
by $p_{A}^\star:=p_{A,\Xset}^\star.$ 

\begin{defn} \label{poised}
\textit {The interpolation problem with a set of nodes ${\mathcal X}_s$ is called $n$-poised if for any data $(c_1,\ldots, c_s)$
there is a unique polynomial $p\in\Pi_n$ satisfying the interpolation conditions \eqref{eq:intpr}.}
\end{defn}
A necessary condition of
poisedness is $\#{\mathcal X}_s=s = N.$

Next, let us consider the concept of $n$-independence (see \cite{E,HJZ}).

\begin{defn}
\textit {A set of nodes ${\mathcal X}_s$ is called $n$-independent, if all its
nodes have $n$-fundamental polynomials. Otherwise, it is called $n$-dependent.}
\end{defn}

Fundamental polynomials are linearly independent.~Therefore a necessary condition of $n$-independence for ${\mathcal X}_s$
is $s \leq N$.

\subsection{Some properties of $n$-independent nodes}

 Let us start with the following
\begin{lem}[Lemma 2.2, \cite{HM}] \label{XA}
\textit {Suppose that a set of nodes $\Xset$ is $n$-independent and the nodes of another set $\Yset$ have
$n$-fundamental polynomials with respect to the set $\Zset = \Xset\cup\Yset.$ Then the set $\Zset$ is $n$-independent too.}
\end{lem}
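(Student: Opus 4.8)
The plan is to reduce the claim to constructing fundamental polynomials only for the nodes of $\Xset$ with respect to $\Zset$. Indeed, by hypothesis every node $B\in\Yset$ already possesses an $n$-fundamental polynomial with respect to $\Zset$, say $q_B:=p_{B,\Zset}^\star\in\Pi_n$, satisfying $q_B(B')=\delta_{B,B'}$ for every $B'\in\Yset$ and, crucially, $q_B\big\vert_{\Xset}=0$ (the latter because $\Xset\subseteq\Zset\setminus\{B\}$). Thus it remains only to show that each $A\in\Xset$ also has an $n$-fundamental polynomial with respect to the larger set $\Zset$.

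First I would take the fundamental polynomial $p:=p_{A,\Xset}^\star\in\Pi_n$ guaranteed by the $n$-independence of $\Xset$, so that $p(A)=1$ and $p\big\vert_{\Xset\setminus\{A\}}=0$. This $p$ need not vanish on $\Yset$, so the idea is to correct it by subtracting its values there, weighted by the polynomials $q_B$. Concretely, I would set
$$r_A:=p-\sum_{B\in\Yset}p(B)\,q_B,$$
which lies in $\Pi_n$ as a linear combination of polynomials of degree at most $n$.

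Then I would verify the three required conditions. Since every $q_B$ vanishes on $\Xset$, the correction term does not affect the values of $r_A$ at the nodes of $\Xset$; hence $r_A(A)=p(A)=1$ and $r_A\big\vert_{\Xset\setminus\{A\}}=p\big\vert_{\Xset\setminus\{A\}}=0$. For a node $B'\in\Yset$ one computes $r_A(B')=p(B')-\sum_{B\in\Yset}p(B)\,q_B(B')=p(B')-p(B')=0$, using $q_B(B')=\delta_{B,B'}$. Thus $r_A=p_{A,\Zset}^\star$ is an $n$-fundamental polynomial for $A$ with respect to $\Zset$, and combined with the already available $q_B$ this shows that every node of $\Zset$ has one, i.e. $\Zset$ is $n$-independent.

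The construction itself is the heart of the matter; there is no serious analytic obstacle, only the bookkeeping. The single point that makes everything go through is the observation that the fundamental polynomials $q_B$ of the $\Yset$-nodes automatically vanish on all of $\Xset$, so the correction used to annihilate the values of $p$ on $\Yset$ leaves its behaviour on $\Xset$ untouched. Verifying this vanishing from the inclusion $\Xset\subseteq\Zset\setminus\{B\}$ is the step I would be most careful about.
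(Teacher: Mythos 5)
Your proof is correct and is exactly the standard argument for this lemma; the paper itself does not prove it but cites it from [HM], where the same correction construction $p_{A,\Xset}^\star-\sum_{B\in\Yset}p_{A,\Xset}^\star(B)\,q_B$ is used. The only cosmetic point is that your justification $\Xset\subseteq\Zset\setminus\{B\}$ tacitly assumes $B\notin\Xset$; if $\Xset\cap\Yset\neq\emptyset$ the argument still goes through because for such $B\ (\neq A)$ the coefficient $p_{A,\Xset}^\star(B)$ is already zero, so one may as well assume $\Yset$ disjoint from $\Xset$ at the outset.
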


Denote the distance between the points $A$ and $B$ by $\rho(A,B).$ Let us recall the following (see \cite{H00})
\begin{lem} \label{eps'}
\textit {Suppose that ${\mathcal X}_s=\{A_i\}_{i=1}^s$ is an $n$-independent set. Then there is a number $\epsilon>0$ such that
any set ${\mathcal X}_s'=\{A_i'\}_{i=1}^s,$ with the property that  $\rho(A_i,A_i')<\epsilon,\ i=1,\ldots,s,$  is $n$-independent too.}
\end{lem}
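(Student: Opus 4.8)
The plan is to recast $n$-independence as a rank condition on a matrix whose entries depend polynomially, hence continuously, on the coordinates of the nodes, and then to invoke the continuity of the determinant.

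First I would fix a basis $m_1, \ldots, m_N$ of $\Pi_n$ (for instance the monomials $x^i y^j$ with $i+j \le n$) and associate with the configuration $\mathcal{X}_s = \{A_i\}_{i=1}^s$ the $s \times N$ collocation matrix $V = V(\mathcal{X}_s)$ whose $(i,j)$ entry is $m_j(A_i)$. The key observation is that $\mathcal{X}_s$ is $n$-independent if and only if the rows of $V$ are linearly independent, i.e. $\mathrm{rank}\, V = s$. Indeed, the rows of $V$ represent the evaluation functionals $L_i \colon p \mapsto p(A_i)$ in the chosen basis; if these are linearly independent, then the evaluation map $p \mapsto (p(A_1), \ldots, p(A_s))$ is onto $\mathbb{R}^s$, so each coordinate vector is attained, which yields precisely the $n$-fundamental polynomial $p_{A_i}^\star$ for every node, while conversely a relation $\sum_k c_k\, p(A_k) \equiv 0$ tested on $p = p_{A_i}^\star$ forces $c_i = 0$. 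Thus the two notions coincide, and $n$-independence is equivalent to $\mathrm{rank}\, V = s$.

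Next I would note that $\mathrm{rank}\, V = s$ is equivalent to the nonvanishing of at least one $s \times s$ minor of $V$. Fix columns $j_1 < \cdots < j_s$ for which the corresponding minor $D(\mathcal{X}_s) := \det\big(m_{j_\ell}(A_i)\big)_{i,\ell=1}^{s}$ is nonzero at the given configuration. Since each $m_{j_\ell}$ is a polynomial, $D$ is a polynomial in the $2s$ coordinates $x_1, y_1, \ldots, x_s, y_s$, and in particular a continuous function of the nodes.

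Finally, because $D(\mathcal{X}_s) \ne 0$, continuity yields an $\epsilon > 0$ such that $D(\mathcal{X}_s') \ne 0$ for every perturbed configuration $\mathcal{X}_s' = \{A_i'\}_{i=1}^s$ with $\rho(A_i, A_i') < \epsilon$ for all $i$. For each such $\mathcal{X}_s'$ the matrix $V(\mathcal{X}_s')$ again has rank $s$, since the same minor survives; hence its rows are independent and, by the equivalence above, $\mathcal{X}_s'$ is $n$-independent. The nonvanishing of the minor also forces the rows of $V(\mathcal{X}_s')$ to be pairwise distinct, so the perturbed nodes remain distinct and $\mathcal{X}_s'$ is genuinely a set of $s$ points. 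The argument presents no serious obstacle: its only real content is the equivalence between $n$-independence and the rank/minor condition, after which the conclusion is a routine consequence of the continuity of the determinant in the node coordinates.
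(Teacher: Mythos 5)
Your argument is correct: the equivalence of $n$-independence with the full-rank (equivalently, nonvanishing-minor) condition on the collocation matrix is right, and the conclusion then follows from continuity of that polynomial minor in the node coordinates, with the distinctness of the perturbed nodes also handled. The paper itself gives no proof of this lemma (it only cites the reference [H00]), and your argument is the standard one that such a statement rests on, so there is nothing to contrast.
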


Next result concerns the extensions of $n$-independent sets.
\begin{lem}[Lemma 2.1, \cite{HJZ}]\label{ext}
\textit {Any $n$-independent set ${\mathcal X}$ with $\#{\mathcal X}<N$ can be enlarged to an $n$-poised set.}
\end{lem}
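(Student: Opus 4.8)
The plan is to enlarge $\Xset$ by adding one node at a time, keeping the set $n$-independent at every stage, until its cardinality reaches $N$; at that point $n$-independence already forces $n$-poisedness. Thus the argument splits into two parts: an \emph{extension step}, showing that an $n$-independent set of size $s<N$ can be augmented by a single node without losing $n$-independence, and a \emph{termination step}, showing that an $n$-independent set of size exactly $N$ is $n$-poised. Iterating the extension step $N-\#\Xset$ times then yields the claim.

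For the extension step, I would first observe that, since $\Xset$ is $n$-independent, its $s$ evaluation functionals are linearly independent on $\Pi_n$, so the subspace $\{p\in\Pi_n:\ p|_{\Xset}=0\}$ has dimension $N-s>0$. Hence there is a nonzero $q\in\Pi_n$ vanishing on $\Xset$. Because the zero set of a nonzero bivariate polynomial is a proper algebraic curve, one can choose a point $A$ with $q(A)\neq 0$; note that then automatically $A\notin\Xset$. The polynomial $p:=q/q(A)$ satisfies $p(A)=1$ and $p|_{\Xset}=0$, so $A$ has an $n$-fundamental polynomial with respect to $\Zset:=\Xset\cup\{A\}$. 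Applying Lemma~\ref{XA} with $\Yset=\{A\}$ shows that $\Zset$ is $n$-independent, which completes the step.

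For the termination step, suppose $\Xset=\{A_1,\dots,A_N\}$ is $n$-independent. Then the $N$ fundamental polynomials $p_{A_1}^\star,\dots,p_{A_N}^\star$ are linearly independent in $\Pi_n$, and since $\dim\Pi_n=N$ they form a basis. For arbitrary data $(c_1,\dots,c_N)$ the polynomial $p=\sum_i c_i\, p_{A_i}^\star$ interpolates, by the defining relations $p_{A_i}^\star(A_j)=\delta_{ij}$, so the evaluation map $p\mapsto(p(A_1),\dots,p(A_N))$ from $\Pi_n$ to $\mathbb{R}^N$ is surjective; being a linear map between spaces of equal dimension $N$, it is therefore bijective, which is precisely the statement that $\Xset$ is $n$-poised.

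I do not expect a serious obstacle here: the content is essentially bookkeeping around Lemma~\ref{XA} together with a dimension count. The one place that needs a little care is the extension step, where one must guarantee that a genuinely new point $A$ can be selected; this is exactly where the fact that a nonzero polynomial cannot vanish identically on the plane is used. Lemma~\ref{eps'} offers an alternative route to the same conclusion by perturbing a suitable candidate node, but the direct argument above avoids invoking it.
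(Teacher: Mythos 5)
Your argument is correct. Note that the paper does not prove this lemma at all --- it is quoted verbatim from [HJZ, Lemma 2.1] --- so there is no in-text proof to compare against; but your proof is the standard one and is exactly what that reference does: since $\Xset$ is $n$-independent, $\dim{\mathcal P}_{n,\Xset}=N-\#\Xset>0$ (this is Proposition \ref{PnX} with $\Yset=\Xset$), a nonzero $q\in{\mathcal P}_{n,\Xset}$ cannot vanish on all of $\mathbb{R}^2$, and normalizing $q$ at a point $A$ off its zero set gives the fundamental polynomial needed to invoke Lemma \ref{XA}; iterating up to cardinality $N$ and using that the $N$ fundamental polynomials then form a basis of $\Pi_n$ yields poisedness. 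Both halves of your write-up (the single-node extension and the termination step) are sound, and you are right that Lemma \ref{eps'} is not needed.
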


Denote the linear space of polynomials of total degree at most $n$
vanishing on ${\mathcal X}$ by
\begin{equation*}{{\mathcal P}}_{n,{\mathcal X}}:=\left\{p\in \Pi_n
: p\big\vert_{\mathcal X}=0\right\}.
\end{equation*}
The following two propositions are well-known  (see, e.g.,\cite{HJZ}).
\begin{proposition} \label{PnX}
\textit {For any node set ${\mathcal X}$ we have that
\begin{equation*}\label{eq:theta1} \dim {{\mathcal P}}_{n,{\mathcal X}} = N - \#{\mathcal Y},\end{equation*}
where ${\mathcal Y}$ is a maximal $n$-independent subset of ${\mathcal X}.$}
\end{proposition}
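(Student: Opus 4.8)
The plan is to translate everything into the language of point-evaluation functionals and then apply rank--nullity. Writing $\mathcal{X}=\{A_1,\ldots,A_s\}$ with $s=\#\mathcal{X}$, I would consider the sampling map $L\colon \Pi_n\to\mathbb{R}^{s}$ defined by $L(p)=(p(A_1),\ldots,p(A_s))$. By construction its kernel is exactly $\mathcal{P}_{n,\mathcal{X}}$, so rank--nullity gives $\dim\mathcal{P}_{n,\mathcal{X}}=N-\operatorname{rank}L$. Thus it suffices to prove that $\operatorname{rank}L=\#\mathcal{Y}$; since $\operatorname{rank}L$ is a single well-defined number, this will simultaneously show that every maximal $n$-independent subset of $\mathcal{X}$ has the common cardinality $\operatorname{rank}L$, so that the statement is well-posed.

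The key step is the following equivalence: a subset $\mathcal{Z}\subseteq\mathcal{X}$ is $n$-independent if and only if the evaluation functionals $\{\delta_A:A\in\mathcal{Z}\}$, regarded as elements of the dual space $\Pi_n^{*}$ via $\delta_A(p)=p(A)$, are linearly independent. For the forward direction, $n$-independence supplies a fundamental polynomial $p_A^\star$ for each $A\in\mathcal{Z}$ with $\delta_B(p_A^\star)=\delta_{AB}$ (Kronecker delta) for $B\in\mathcal{Z}$; testing any linear relation $\sum_A c_A\delta_A=0$ against $p_B^\star$ forces every $c_B=0$. Conversely, if $\{\delta_A:A\in\mathcal{Z}\}$ are linearly independent, then the restricted sampling map $\Pi_n\to\mathbb{R}^{\#\mathcal{Z}}$ is surjective; pulling back each standard basis vector produces a fundamental polynomial for the corresponding node, so $\mathcal{Z}$ is $n$-independent.

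With this equivalence in hand, the proof concludes by standard linear algebra. The rank of $L$ equals the dimension of the span of its rows, namely $\dim\operatorname{span}\{\delta_A:A\in\mathcal{X}\}$. By the equivalence, a subset $\mathcal{Y}\subseteq\mathcal{X}$ is a maximal $n$-independent subset precisely when $\{\delta_A:A\in\mathcal{Y}\}$ is a maximal linearly independent subfamily of $\{\delta_A:A\in\mathcal{X}\}$, that is, a basis of their span. Every such basis has cardinality $\dim\operatorname{span}\{\delta_A:A\in\mathcal{X}\}=\operatorname{rank}L$, whence $\#\mathcal{Y}=\operatorname{rank}L$ and the proposition follows.

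The only genuinely nontrivial point is the equivalence between $n$-independence and linear independence of the evaluation functionals; once it is established, the counting is automatic and, as a bonus, guarantees that the cardinality $\#\mathcal{Y}$ appearing in the statement is independent of the chosen maximal $n$-independent subset. This last observation could alternatively be extracted from Lemma \ref{XA}, but the functional-theoretic route yields it for free.
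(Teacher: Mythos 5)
Your proof is correct. Note that the paper does not actually prove this proposition: it is stated as a well-known fact with a citation to \cite{HJZ}, so there is no internal argument to compare against. Your route — identifying $\mathcal{P}_{n,\mathcal{X}}$ as the kernel of the sampling map, proving that $n$-independence of a subset $\mathcal{Z}\subseteq\mathcal{X}$ is equivalent to linear independence of the evaluation functionals $\{\delta_A : A\in\mathcal{Z}\}$ in $\Pi_n^{*}$, and then applying rank--nullity — is the standard proof of this statement, and it is complete; in particular you correctly observe that it also shows $\#\mathcal{Y}$ is the same for every maximal $n$-independent subset, which is needed for the formula to be well-posed.
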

\begin{proposition}\label{maxline}
\textit {If a polynomial $p\in \Pi_n$ vanishes at $n+1$ points of  a line $\ell$, then we have that $p\big\vert_{\ell}=0$ and 
$p = \ell r,$ where $r \in \Pi_{n-1}.$}
\end{proposition}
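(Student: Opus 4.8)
The plan is to separate the statement into its two assertions: first that $p$ vanishes identically on $\ell$, and second that $\ell$ then divides $p$ with the quotient landing in $\Pi_{n-1}$.

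For the first assertion, I would parametrize the line. Fixing a point $(x_0,y_0)\in\ell$ and a nonzero direction vector $(u,v)$, every point of $\ell$ has the form $(x_0+tu,\,y_0+tv)$ for a unique $t\in\mathbb{R}$, and the $n+1$ given points correspond to $n+1$ distinct parameter values $t_1,\ldots,t_{n+1}$. The composition $\phi(t):=p(x_0+tu,\,y_0+tv)$ is then a univariate polynomial in $t$ of degree at most $n$, since substituting degree-one expressions into a polynomial of total degree $n$ cannot raise the degree. By hypothesis $\phi(t_i)=0$ for $i=1,\ldots,n+1$, so $\phi$ has at least $n+1$ distinct roots while having degree at most $n$; hence $\phi\equiv 0$. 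As every point of $\ell$ is attained by some value of $t$, this gives $p\big\vert_\ell=0.$

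For the second assertion, I would reduce to a coordinate line by an invertible affine change of variables. Choose an affine map $T$ carrying $\ell$ onto the axis $\{Y=0\}$, and set $q:=p\circ T^{-1}.$ Because $T$ is affine with invertible linear part, $q\in\Pi_n$ and $q$ vanishes on $\{Y=0\}$, i.e.\ $q(X,0)\equiv 0.$ Writing $q(X,Y)=\sum_{j\ge 0}Y^{j}q_j(X)$ and setting $Y=0$ shows $q_0\equiv 0$, so $Y$ divides $q$: we have $q=Y\,s$ with $s\in\Pi_{n-1}.$ Pulling back through $T$, the function $Y\circ T$ is an affine polynomial vanishing precisely on $\ell$, hence a nonzero scalar multiple of $\ell$, and $s\circ T\in\Pi_{n-1}.$ Absorbing the scalar yields $p=\ell\,r$ with $r\in\Pi_{n-1},$ as claimed.

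The routine part is the univariate root count in the first step. The one point requiring care is the divisibility step: one must justify that vanishing on a line forces an exact algebraic factorization over $\Pi$, not merely numerical vanishing. I expect this to be the main obstacle, and I resolve it by the affine normalization above, which turns the divisibility claim into the transparent observation that a polynomial vanishing identically on $\{Y=0\}$ has no $Y$-free terms. The preservation of total degree under an invertible affine substitution is what guarantees $r\in\Pi_{n-1}$ rather than merely $r\in\Pi.$
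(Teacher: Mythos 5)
Your proof is correct. Note that the paper itself gives no argument for this proposition: it is stated as well-known with a citation, so there is no in-text proof to compare against. Your two-step argument is the standard one and is complete: the univariate restriction $\phi(t)=p(x_0+tu,\,y_0+tv)$ has degree at most $n$ and $n+1$ distinct roots, hence vanishes identically, which gives $p\big\vert_\ell=0$; and the affine normalization sending $\ell$ to $\{Y=0\}$ correctly reduces the divisibility claim to the observation that a polynomial vanishing on $\{Y=0\}$ has every monomial divisible by $Y$, with the invertible affine substitution preserving total degree so that the quotient lies in $\Pi_{n-1}$. You rightly flag the divisibility step as the one needing care, and your normalization handles it cleanly; an equivalent alternative would be to expand $p$ directly in powers of the linear polynomial defining $\ell$ and a transversal coordinate, but that is the same idea in different clothing.
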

A plane algebraic curve is the zero set of some bivariate polynomial of degree $\ge 1.$~To simplify notation, we shall use the same letter,  say $p$,
to denote the polynomial $p$ and the curve given by the equation $p(x,y)=0$.

In the sequel we will need the following
\begin{proposition}[Prop. 1.10, \cite{HM}]\label{hm}
Let $\Xset$ be a set of nodes. Then ${{\mathcal P}}_{n,{\mathcal X}}=\{0\}$ if and only if  
$\Xset$ has an $n$-poised subset.
\end{proposition}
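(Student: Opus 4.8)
The plan is to reduce the entire equivalence to the dimension formula of Proposition \ref{PnX}. Let $\Yset\subseteq\Xset$ denote a maximal $n$-independent subset, which exists because $\Xset$ is finite. Proposition \ref{PnX} gives $\dim \mathcal{P}_{n,\Xset} = N - \#\Yset$, and since the fundamental polynomials of an $n$-independent set are linearly independent, every $n$-independent subset of $\Xset$ has cardinality at most $N$. Hence the condition $\mathcal{P}_{n,\Xset}=\{0\}$ is equivalent to $\#\Yset=N$. In this way the whole statement becomes the claim that $\Xset$ contains an $n$-poised subset precisely when a maximal $n$-independent subset attains the full cardinality $N$.

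For the direction ``$\Xset$ has an $n$-poised subset $\Rightarrow \mathcal{P}_{n,\Xset}=\{0\}$'', I would first observe that any $n$-poised set $\Zset$ is automatically $n$-independent: poisedness lets one interpolate the data equal to $1$ at a chosen node and $0$ at the remaining ones, which produces an $n$-fundamental polynomial for that node. Since $\#\Zset=N$ by the necessary condition for poisedness, and no $n$-independent subset of $\Xset$ can have more than $N$ nodes, such a $\Zset$ is itself a maximal $n$-independent subset of $\Xset$. Substituting it into Proposition \ref{PnX} yields $\dim \mathcal{P}_{n,\Xset}=N-N=0$.

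For the converse, suppose $\mathcal{P}_{n,\Xset}=\{0\}$, so that $\#\Yset=N$ for the maximal $n$-independent subset $\Yset$. I would claim that $\Yset$ is the desired $n$-poised subset. The $N$ fundamental polynomials $\{p_{A,\Yset}^\star : A\in\Yset\}$ are linearly independent and therefore form a basis of $\Pi_n$; this immediately gives existence of an interpolant for arbitrary data via $p=\sum_{A\in\Yset} c_A\, p_{A,\Yset}^\star$. For uniqueness I would apply Proposition \ref{PnX} a second time, now to $\Yset$ itself: as $\Yset$ is $n$-independent with $N$ nodes it is its own maximal $n$-independent subset, whence $\dim \mathcal{P}_{n,\Yset}=N-N=0$, so any two interpolants of the same data must coincide.

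The step demanding the most care is the converse, specifically the uniqueness of the interpolant: existence follows at once from the basis of fundamental polynomials, but uniqueness relies on the fact that no nonzero polynomial in $\Pi_n$ vanishes on all of $\Yset$, which is exactly where the dimension formula must be invoked again. Beyond this, no genuinely analytic or geometric difficulty arises; the argument is essentially a bookkeeping exercise around Proposition \ref{PnX}, combined with the elementary observation that $n$-poisedness and $n$-independence coincide once the number of nodes equals $N$.
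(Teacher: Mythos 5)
Your argument is correct. Note that the paper itself gives no proof of Proposition \ref{hm}; it is imported verbatim as Prop.~1.10 of \cite{HM}, so there is nothing internal to compare against. Your reduction of the whole equivalence to Proposition \ref{PnX} is sound: the identity $\dim{\mathcal P}_{n,\Xset}=N-\#\Yset$ turns the vanishing of ${\mathcal P}_{n,\Xset}$ into the statement $\#\Yset=N$ for a maximal $n$-independent subset $\Yset$; an $n$-poised subset is $n$-independent of cardinality $N$ (the delta-data interpolants are fundamental polynomials), which forces $\#\Yset=N$ in one direction; and in the other direction the $N$ linearly independent fundamental polynomials of $\Yset$ form a basis of $\Pi_n$, giving existence of interpolants, while a second application of Proposition \ref{PnX} to $\Yset$ itself gives ${\mathcal P}_{n,\Yset}=\{0\}$ and hence uniqueness. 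You correctly isolate the only delicate point, namely that existence alone does not give poisedness and uniqueness must be extracted from the dimension count. The one item worth making explicit is that in this setting ``maximal'' $n$-independent subset means one of maximum cardinality (equivalently, all inclusion-maximal $n$-independent subsets have the same size, since $n$-independence is linear independence of evaluation functionals and hence matroidal); this is implicitly assumed when you feed the $n$-poised subset $\Zset$ into Proposition \ref{PnX} in place of $\Yset$, and a one-line remark would close that gap.
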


Set $d(n, k) := N_n - N_{n-k} = (1/2) k(2n+3-k).$
The following is a  generalization of Proposition \ref{maxline}.
\begin{proposition}[Prop. 3.1, \cite{Raf}]\label{maxcurve}
\textit {Let $q$ be an algebraic curve of degree $k \le n$ with no multiple components. Then the following hold:}

\textit {$i)$ any subset of $q$ containing more than $d(n,k)$ nodes is
$n$-dependent;}

\textit {$ii)$ any subset ${\mathcal X}$ of $q$ containing exactly $d(n,k)$ nodes is $n$-independent if and only if the following condition
holds:
\begin{equation}\label{p=qr}
p\in {\Pi_{n}}\quad \text{and} \quad p|_{{\mathcal X}} = 0 \implies  p = qr,\ \hbox{where}\ r \in \Pi_{n-k}.
\end{equation}}
\end{proposition}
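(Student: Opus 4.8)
The plan is to compare the solution space ${\mathcal P}_{n,{\mathcal X}}$ with the fixed subspace
$V:=\{qr : r\in\Pi_{n-k}\}\subseteq\Pi_n$, which consists of exactly those polynomials of degree $\le n$ that are forced to vanish on the whole curve $q$ by carrying $q$ as a factor, and to push everything through the dimension formula of Proposition \ref{PnX}. All three assertions will fall out of one inequality bounding the size of a maximal $n$-independent subset.

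First I would pin down $\dim V$. Since $\deg q = k\le n$, each product $qr$ with $r\in\Pi_{n-k}$ lies in $\Pi_n$, so $V\subseteq\Pi_n$; and the linear map $r\mapsto qr$ is injective because the polynomial ring is an integral domain and $q\ne 0$, whence $\dim V=\dim\Pi_{n-k}=N_{n-k}$. Every element of $V$ vanishes identically on the curve $q$, so for any node set ${\mathcal X}\subseteq q$ it vanishes on ${\mathcal X}$; thus $V\subseteq {\mathcal P}_{n,{\mathcal X}}$ and $\dim {\mathcal P}_{n,{\mathcal X}}\ge N_{n-k}$. Writing ${\mathcal Y}$ for a maximal $n$-independent subset of ${\mathcal X}$, Proposition \ref{PnX} then gives
$\#{\mathcal Y}=N-\dim {\mathcal P}_{n,{\mathcal X}}\le N-N_{n-k}=d(n,k)$.

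This inequality immediately yields $i)$: if $\#{\mathcal X}>d(n,k)$, then $\#{\mathcal Y}\le d(n,k)<\#{\mathcal X}$, so ${\mathcal Y}\subsetneq{\mathcal X}$, and ${\mathcal X}$ cannot be $n$-independent, since an $n$-independent set is its own maximal $n$-independent subset. For $ii)$, with $\#{\mathcal X}=d(n,k)$, I would chain two equivalences. By Proposition \ref{PnX}, ${\mathcal X}$ is $n$-independent $\iff\#{\mathcal Y}=\#{\mathcal X}=d(n,k)\iff\dim{\mathcal P}_{n,{\mathcal X}}=N-d(n,k)=N_{n-k}$. On the other hand, because $V\subseteq{\mathcal P}_{n,{\mathcal X}}$ and $\dim V=N_{n-k}$, the equality $\dim{\mathcal P}_{n,{\mathcal X}}=N_{n-k}$ holds $\iff{\mathcal P}_{n,{\mathcal X}}=V$, and this last identity is exactly condition \eqref{p=qr} (the inclusion ${\mathcal P}_{n,{\mathcal X}}\supseteq V$ being automatic). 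Combining the two equivalences finishes $ii)$.

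The argument is short once $V$ is in view, so there is no serious analytic obstacle; the only points demanding care are the injectivity of multiplication by $q$ (needed to get $\dim V=N_{n-k}$ exactly, not merely $\ge$) and the careful bookkeeping of dimensions through Proposition \ref{PnX}, where the identity $N-N_{n-k}=d(n,k)$ is used repeatedly. I would note in passing that this dimension count uses only $q\ne 0$ and $\deg q=k\le n$; the hypothesis that $q$ have no multiple components, while natural in the surrounding theory of maximal curves, is not what drives the present argument.
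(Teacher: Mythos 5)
Your proof is correct and complete: the inclusion $V=q\,\Pi_{n-k}\subseteq{\mathcal P}_{n,{\mathcal X}}$ together with $\dim V=N_{n-k}$ (injectivity of multiplication by $q$) and the dimension formula of Proposition \ref{PnX} yields both $i)$ and the chain of equivalences in $ii)$ exactly as you describe. The paper itself imports this statement from \cite{Raf} without reproducing a proof, so there is no in-paper argument to compare against; your dimension count is the standard route to this result. Your closing observation is also accurate for the proposition as stated: the absence of multiple components is not needed for the two assertions themselves (in the degenerate case, e.g.\ $q=\ell^2$, both sides of the equivalence in $ii)$ simply fail together), though it is essential elsewhere in the theory, for instance to guarantee that $d(n,k)$ $n$-independent nodes can actually be placed on the curve as in Proposition \ref{extcurve}.
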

Thus, according to Proposition~\ref{maxcurve}, $i)$, at most $d(n,k)$ $n$-independent nodes can lie in a curve $q$ of degree $k \le n$.
This motivates the following
\begin{defn}[Def. 3.1, \cite{Raf}]\label{def:maximal}
\textit {Given an $n$-independent set of nodes $\mathcal X$ with $\#\mathcal X\ge d(n,k).$ A curve of degree $k \le n$ passing through $d(n,k)$ points
of $\mathcal X$ is called maximal.}
\end{defn}

Let us bring a characterization of maximal curves:

\begin{proposition}[Prop. 3.3, \cite{Raf}] \label{maxcor}
\textit {Given an $n$-independent set of nodes $\mathcal X$ with $\#\mathcal X\ge d(n,k).$ Then
a curve $\mu$ of degree $k,\ k\le n,$ is a maximal curve if and only if }
$p\in\Pi_n,\ p|_{\mathcal X \cap\mu}=0 \implies p=\mu s,\ s\in\Pi_{n-k}.$
\end{proposition}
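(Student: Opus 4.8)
The plan is to reduce both implications to a single dimension count for the space $\mathcal{P}_{n,\mathcal{X}\cap\mu}$ of polynomials in $\Pi_n$ that vanish on the nodes of $\mathcal{X}$ lying on $\mu$. The key preliminary observation is that the multiplication map $s\mapsto \mu s$ carries $\Pi_{n-k}$ injectively into $\mathcal{P}_{n,\mathcal{X}\cap\mu}$: each product $\mu s$ has degree at most $k+(n-k)=n$ and vanishes on the whole curve $\mu$, hence on $\mathcal{X}\cap\mu$, while injectivity follows from $\mu\neq 0$ together with the fact that $\Pi$ is an integral domain. Thus $\{\mu s: s\in\Pi_{n-k}\}$ is a subspace of $\mathcal{P}_{n,\mathcal{X}\cap\mu}$ of dimension exactly $N_{n-k}$.

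Next I would use that $\mathcal{X}$ is $n$-independent, so its subset $\mathcal{X}\cap\mu$ is $n$-independent as well; hence $\mathcal{X}\cap\mu$ is its own maximal $n$-independent subset, and Proposition~\ref{PnX} yields $\dim\mathcal{P}_{n,\mathcal{X}\cap\mu}=N_n-\#(\mathcal{X}\cap\mu)$. Combined with the previous paragraph this already forces $\#(\mathcal{X}\cap\mu)\le N_n-N_{n-k}=d(n,k)$, so that the equivalence can be read off from whether this bound is attained. For the ``only if'' direction, $\mu$ maximal means $\#(\mathcal{X}\cap\mu)=d(n,k)$, whence $\dim\mathcal{P}_{n,\mathcal{X}\cap\mu}=N_{n-k}$; since the subspace $\{\mu s\}$ has the same dimension and is contained in $\mathcal{P}_{n,\mathcal{X}\cap\mu}$, the two spaces coincide, which is exactly the stated implication. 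For the ``if'' direction, the stated implication says $\mathcal{P}_{n,\mathcal{X}\cap\mu}\subseteq\{\mu s: s\in\Pi_{n-k}\}$; as the reverse inclusion always holds we get $\dim\mathcal{P}_{n,\mathcal{X}\cap\mu}=N_{n-k}$, and feeding this back into the formula of Proposition~\ref{PnX} gives $\#(\mathcal{X}\cap\mu)=d(n,k)$, i.e.\ $\mu$ is maximal.

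I expect the main obstacle to be the dimension bookkeeping rather than any geometric input: one must verify that the multiplication map is genuinely injective (so its image has the full dimension $N_{n-k}$) and that the $n$-independence of $\mathcal{X}\cap\mu$ collapses the formula of Proposition~\ref{PnX} to $N_n-\#(\mathcal{X}\cap\mu)$. It is worth noting that this route needs no hypothesis on multiple components of $\mu$: the inequality $\#(\mathcal{X}\cap\mu)\le d(n,k)$ drops out of the dimension comparison itself, rather than from Proposition~\ref{maxcurve}, so the argument applies to an arbitrary curve $\mu$ of degree $k$.
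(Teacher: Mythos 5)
Your argument is correct, and it is worth noting at the outset that this paper does not prove Proposition~\ref{maxcor} at all --- it is imported verbatim from \cite{Raf} --- so there is no in-text proof to compare against; your write-up is a genuine, self-contained derivation from Proposition~\ref{PnX}. The two pillars both hold: the multiplication map $s\mapsto\mu s$ is injective because $\Pi$ is an integral domain, so $\{\mu s:s\in\Pi_{n-k}\}$ is an $N_{n-k}$-dimensional subspace of $\mathcal{P}_{n,\mathcal{X}\cap\mu}$; and since any subset of an $n$-independent set is $n$-independent (a fundamental polynomial for a node relative to $\mathcal{X}$ serves a fortiori relative to $\mathcal{X}\cap\mu$), Proposition~\ref{PnX} gives $\dim\mathcal{P}_{n,\mathcal{X}\cap\mu}=N_n-\#(\mathcal{X}\cap\mu)$. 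Comparing the two numbers yields $\#(\mathcal{X}\cap\mu)\le d(n,k)$ and reduces both implications to whether equality holds, exactly as you say. Your closing remark is also accurate and is a small improvement over the way the bound is packaged in Proposition~\ref{maxcurve}$(i)$, which assumes $\mu$ has no multiple components: your dimension count needs no such hypothesis, at the price of the bound $d(n,k)$ being non-sharp for curves with repeated factors (e.g.\ a double line supports at most $n+1$ independent nodes, not $d(n,2)$). The only cosmetic caveat is that Definition~\ref{def:maximal} reads ``passing through $d(n,k)$ points,'' so in the ``only if'' direction you should say explicitly that maximality together with your upper bound forces $\#(\mathcal{X}\cap\mu)=d(n,k)$ exactly --- which you have effectively done.
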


Next result concerns  maximal independent sets in curves.
\begin{proposition}[Prop. 3.5, \cite{HakTor}] \label{extcurve}
\textit {Assume that $\sigma$ is an algebraic curve  of degree $k$ with no multiple components and ${\mathcal X}_s\subset \sigma$ is any $n$-independent node set of cardinality $s,\ s<d(n,k).$ Then the set ${\mathcal X}_s$ can be enlarged to a maximal $n$-independent set ${\mathcal X}_{d}\subset \sigma$ of cardinality $d=d(n,k)$.}
\end{proposition}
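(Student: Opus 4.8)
The plan is to build ${\mathcal X}_d$ from ${\mathcal X}_s$ by adjoining one node of $\sigma$ at a time, each adjunction preserving $n$-independence; since $d(n,k)=N_n-N_{n-k}$ is finite, the process terminates after $d(n,k)-s$ steps. So it suffices to prove the single-step claim: if ${\mathcal X}\subset\sigma$ is $n$-independent with $\#{\mathcal X}=s<d(n,k)$, then there is a node $A\in\sigma\setminus{\mathcal X}$ such that ${\mathcal X}\cup\{A\}$ is again $n$-independent. Once this is available, iterating and stopping at cardinality $d(n,k)$ yields an $n$-independent subset of $\sigma$ of the maximal admissible size (maximal by Proposition \ref{maxcurve}, $i)$, which bounds the number of $n$-independent nodes on $\sigma$ by $d(n,k)$), i.e. exactly the asserted set.

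To reduce the single-step claim to a statement about fundamental polynomials, I would invoke Lemma \ref{XA} with $\Yset=\{A\}$: since ${\mathcal X}$ is already $n$-independent, ${\mathcal X}\cup\{A\}$ is $n$-independent as soon as $A$ admits an $n$-fundamental polynomial with respect to ${\mathcal X}\cup\{A\}$. Thus it is enough to exhibit some $p\in{\mathcal P}_{n,{\mathcal X}}$ and some $A\in\sigma$ with $p(A)\ne0$: after rescaling, $p/p(A)$ is the required fundamental polynomial, and necessarily $A\notin{\mathcal X}$ because $p$ vanishes on ${\mathcal X}$.

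The existence of such a $p$ is a dimension count. The assignment $r\mapsto\sigma r$ embeds $\Pi_{n-k}$ injectively into ${\mathcal P}_{n,{\mathcal X}}$ (injective because the polynomial ring is an integral domain and $\sigma\ne0$; the image lies in ${\mathcal P}_{n,{\mathcal X}}$ since every $\sigma r$ vanishes on $\sigma\supset{\mathcal X}$ and has degree $\le n$), so $\sigma\,\Pi_{n-k}$ is a subspace of ${\mathcal P}_{n,{\mathcal X}}$ of dimension $N_{n-k}$. On the other hand, as ${\mathcal X}$ is $n$-independent, Proposition \ref{PnX} gives $\dim{\mathcal P}_{n,{\mathcal X}}=N_n-s$, and the hypothesis $s<d(n,k)=N_n-N_{n-k}$ forces $N_n-s>N_{n-k}$. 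Hence the inclusion $\sigma\,\Pi_{n-k}\subset{\mathcal P}_{n,{\mathcal X}}$ is strict, and we may pick $p\in{\mathcal P}_{n,{\mathcal X}}$ with $\sigma\nmid p$.

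It remains to see that this $p$ does not vanish identically on $\sigma$, which supplies the sought point $A\in\sigma$. This is the one place where the assumption that $\sigma$ has no multiple components is essential: a polynomial of degree $\le n$ vanishing at every point of a reduced curve $\sigma$ must be divisible by $\sigma$, so that $p=\sigma r$ with $r\in\Pi_{n-k}$, contradicting $\sigma\nmid p$. I expect this divisibility (Study-type) step to be the main obstacle, since over $\mathbb R$ it requires each component of $\sigma$ to carry infinitely many real points; I would either cite it as the standard fact already underlying Propositions \ref{maxcurve} and \ref{maxcor}, or establish it component-by-component via Bézout. Granting it, the single-step claim follows, and the proposition is proved by the iteration described above.
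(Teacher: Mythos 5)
This proposition is only cited in the paper (it is Prop.~3.5 of \cite{HakTor}), so there is no in-paper proof to compare against; your argument is correct and follows essentially the same route as the standard one in the cited source. Namely: the dimension count $\dim{\mathcal P}_{n,{\mathcal X}}=N_n-s>N_{n-k}=\dim\bigl(\sigma\,\Pi_{n-k}\bigr)$ produces a $p\in\Pi_n$ vanishing on ${\mathcal X}$ but not divisible by $\sigma$, hence (by the reduced-curve divisibility fact you rightly flag as the crux, the same fact underlying Propositions \ref{maxline} and \ref{maxcurve}) not vanishing identically on $\sigma$; Lemma \ref{XA} then lets you adjoin a point of $\sigma$ where $p\neq 0$ and iterate until the cardinality reaches $d(n,k)$, which is maximal by Proposition \ref{maxcurve}, $i)$.
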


Below a replacement of a node in an $n$-independent set is described such that the set remains $n$-independent.
\begin{lem}[Lemma 6, \cite{HK}] \label{lem44}
\textit {Assume that ${\mathcal X}$ is an $n$-independent node set and a node $A\in \mathcal X$ has an $n$-fundamental polynomial $p^\star_A$ such that $p^\star_A(A')\neq 0.$ Then we can replace the node $A$ with $A'$ such that the resulted set ${\mathcal X}':={\mathcal X}\cup\{A'\}\setminus \{A\}$
is $n$-independent too. In particular, such replacement can be done in the following two cases:}

$i)$ if a node $A\in \mathcal X$ belongs to several components of $\sigma,$ then we can replace it with a node $A',$ which belongs to only one (desired) component,

$ii)$ if a curve $q$ is not a component of an $n$-fundamental polynomial $p^\star_A$ then we can replace the node $A$ with a node $A'$ lying in $q.$
\end{lem}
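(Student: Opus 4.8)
The plan is to exhibit, by direct construction, an $n$-fundamental polynomial for every node of $\mathcal{X}' = (\mathcal{X}\setminus\{A\})\cup\{A'\}$, using only the polynomial $p_A^\star$ together with the fundamental polynomials $p_B^\star$ that already exist because $\mathcal{X}$ is $n$-independent. Since the special cases $(i)$ and $(ii)$ merely produce a point $A'$ satisfying the hypothesis $p_A^\star(A')\neq 0$, the core of the argument is the main assertion, and this part is pure linear algebra.

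First I would treat the incoming node $A'$. As $p_A^\star(A')\neq 0$, the rescaled polynomial $\tilde p_{A'}:=p_A^\star/p_A^\star(A')\in\Pi_n$ satisfies $\tilde p_{A'}(A')=1$ and vanishes on $\mathcal{X}\setminus\{A\}=\mathcal{X}'\setminus\{A'\}$, because $p_A^\star$ does. Hence $\tilde p_{A'}$ is an $n$-fundamental polynomial for $A'$ with respect to $\mathcal{X}'$. Next, for each surviving node $B\in\mathcal{X}\setminus\{A\}$ I would repair its old fundamental polynomial so that it also kills $A'$, by setting $\tilde p_B:=p_B^\star-p_B^\star(A')\,\tilde p_{A'}\in\Pi_n$. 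Using $p_A^\star(B)=0$ (so $\tilde p_{A'}(B)=0$) one gets $\tilde p_B(B)=1$; for $C\in\mathcal{X}\setminus\{A,B\}$ both summands vanish, so $\tilde p_B(C)=0$; and at $A'$ one gets $\tilde p_B(A')=p_B^\star(A')-p_B^\star(A')=0$. Thus $\tilde p_B$ is an $n$-fundamental polynomial for $B$ with respect to $\mathcal{X}'$. Every node of $\mathcal{X}'$ then has an $n$-fundamental polynomial, so $\mathcal{X}'$ is $n$-independent, which proves the main statement.

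It remains to supply a suitable $A'$ in the two listed situations, and this geometric step is the only real obstacle. For $(ii)$, if the curve $q$ is not a component of $p_A^\star$, then $p_A^\star$ does not vanish identically on $q$; hence its restriction to $q$ has only finitely many zeros, and any $A'\in q$ avoiding them satisfies $p_A^\star(A')\neq 0$, so the main part applies. For $(i)$ the node $A$ lies on the prescribed component of $\sigma$ and $p_A^\star(A)=1\neq0$, so $p_A^\star$ is not identically zero on that component. The delicate point is to choose $A'$ lying on this component alone and with $p_A^\star(A')\neq0$. I would use that the intersections of the chosen component with the remaining components of $\sigma$ form a finite (isolated) set, while $p_A^\star\neq0$ throughout a neighborhood of $A$ by continuity; moving along the chosen component away from $A$, but staying inside that neighborhood, then yields a point $A'$ off all other components with $p_A^\star(A')\neq0$, as needed.
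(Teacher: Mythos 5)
Your proof is correct. The paper itself gives no proof of this lemma (it is quoted from reference [HK]), but your argument is the standard one: the explicit construction $\tilde p_{A'}=p_A^\star/p_A^\star(A')$ and $\tilde p_B=p_B^\star-p_B^\star(A')\,\tilde p_{A'}$ verifies $n$-independence of $\mathcal{X}'$ directly, and the reductions of cases $(i)$ and $(ii)$ to the hypothesis $p_A^\star(A')\neq 0$ are sound (in $(ii)$ the phrase ``only finitely many zeros on $q$'' is slightly imprecise when $q$ is reducible and $p_A^\star$ contains one of its components, but all that is needed, and all that is true in general, is that $p_A^\star$ does not vanish identically on $q$, which already yields a suitable $A'$).
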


Next result from Algebraic Geometry will be used in the sequel:
\begin{thm}[Th.~2.2, \cite{W}] \label{intn} If $\Cset$ is a curve of degree $n$ with no multiple components, then through any point $O$ not in $\Cset$ there pass lines which intersect $\Cset$ in $n$ distinct points.
\end{thm}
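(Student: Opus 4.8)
The plan is to work over the complex projective plane; the statement is genuinely one about an algebraically closed ground field, and it is false over $\mathbb{R}$, as witnessed by the smooth oval $x^4+y^4=1$, which no real line meets in four distinct real points. Homogenize $f$ to a squarefree form $F$ of degree $n$ cutting out $\mathcal{C}$, choose projective coordinates so that $O=[0:0:1]$, and consider the pencil of all lines through $O$, a family parametrized by a projective line $\mathbb{P}^1$. By B\'ezout every line not contained in $\mathcal{C}$ meets $\mathcal{C}$ in exactly $n$ points counted with multiplicity, so a line through $O$ meets $\mathcal{C}$ in $n$ distinct affine points precisely when it (a) passes through no singular point of $\mathcal{C}$, (b) is tangent to $\mathcal{C}$ nowhere, and (c) meets $\mathcal{C}$ in no point of the line at infinity. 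The entire proof thus reduces to showing that only finitely many lines of the pencil are excluded by (a)--(c); since the pencil is infinite, a good line---indeed infinitely many---will then exist.

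Conditions (a) and (c) are the easy ones: a reduced curve has only finitely many singular points, and $\mathcal{C}$ meets the line at infinity in at most $n$ points, so in either case only finitely many lines through $O$ are spoiled. The crux is (b), the control of the tangent lines emanating from $O$. For this I would bring in the first polar $P_O := F_Z$, a curve of degree $n-1$, whose classical defining property is that $\mathcal{C}\cap P_O$ consists exactly of the singular points of $\mathcal{C}$ together with the points at which the tangent to $\mathcal{C}$ passes through $O$. Consequently the number of tangent lines from $O$ is bounded by $\#(\mathcal{C}\cap P_O)$, and everything comes down to showing this intersection is finite, i.e. that $\mathcal{C}$ and $P_O$ have no common component.

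This is the main obstacle, and it is precisely the point at which the hypothesis of no multiple components is indispensable. I would argue by contradiction. An irreducible common component $\Gamma=\{G=0\}$ occurs in $F$ with multiplicity one by reducedness, so $F=GH$ with $\gcd(G,H)=1$; differentiating, $F_Z=G_Z H+G H_Z$, and $G\mid F_Z$ then gives $G\mid G_Z H$, whence $G\mid G_Z$. Since $\deg G_Z<\deg G$ this forces $G_Z\equiv 0$, so $G$ is a form in $X,Y$ alone and $\Gamma$ is a union of lines through $O=[0:0:1]$. This is impossible: for $\deg\Gamma\ge 2$ it contradicts irreducibility, while for $\deg\Gamma=1$ it puts $O\in\Gamma\subset\mathcal{C}$, against $O\notin\mathcal{C}$. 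Hence $\mathcal{C}\cap P_O$ is finite, (b) excludes only finitely many lines, and the reduction of the first paragraph completes the argument: any line through $O$ outside the resulting finite exceptional set meets $\mathcal{C}$ transversally in $n$ distinct smooth affine points.
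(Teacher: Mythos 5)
The paper does not prove Theorem \ref{intn} at all: it is imported verbatim from Walker's book, so there is no in-paper argument to compare yours against. Judged on its own, your proof is correct and complete in the setting where the theorem is actually a theorem, namely over an algebraically closed field. The skeleton --- reduce to showing that only finitely many lines of the pencil through $O$ are excluded by passing through a singular point, being tangent somewhere, or meeting the curve at infinity --- is right, and the one nontrivial step is handled properly: writing $F=GH$ with $G$ an irreducible common factor of $F$ and the polar $F_Z$, deducing $G\mid G_Z$, hence $G_Z\equiv 0$, hence that $G$ is a linear form through $O=[0:0:1]$, is exactly where both hypotheses (no multiple components, and $O\notin\Cset$) enter, and you use each of them at the right moment. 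This polar-curve argument is the classical one and is essentially Walker's own route.

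One point deserves emphasis, and you raise it yourself only to set it aside. Your counterexample $x^4+y^4=1$ shows the statement is false over $\mathbb{R}$ if ``$n$ distinct points'' means real points; but in the paper the intersection points $A_1,A_2,A_3$ produced by Theorem \ref{intn} are immediately adopted as interpolation nodes, which must be real. So the version of the theorem the paper actually needs is a real one, and it does not follow from the complex statement you proved without further information about the real locus of the curve $\gamma_0$. This is not a gap in your argument --- you prove precisely the cited statement in its natural habitat --- but you should be aware that your own opening remark identifies a genuine issue with how the theorem is applied later in the paper, not merely a pedantic caveat.
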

\noindent Let us mention also that, as it follows from the proof, if a line $\ell$ through a point $O$ intersects $\Cset$ in $n$ distinct points then any line through $O,$ sufficiently close to $\ell,$ has the same property.
Finally, let us present a well-known
\begin{lem} \label{2cor}
\textit {Suppose that $m$ linearly independent polynomials vanish at the set ${\mathcal X}.$
Then for any node $A\notin {\mathcal X}$ there are $m-1$ linearly independent polynomials, in their linear span, vanishing at $A$ and the set ${\mathcal X}.$}
\end{lem}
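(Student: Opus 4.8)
The plan is to recognize this as a statement about a single linear functional on a finite-dimensional space. Let $p_1,\dots,p_m$ be the given linearly independent polynomials, all vanishing on $\mathcal X$, and set $V:=\operatorname{span}\{p_1,\dots,p_m\}$, so that $\dim V=m$ and every element of $V$ vanishes on $\mathcal X$. The object to study is the evaluation functional $\lambda(p)=p(A)$ on $V$, which is linear and takes values in the (one-dimensional) field of scalars.

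Since the image of $\lambda$ is a subspace of the scalar field, its rank is at most $1$, so by the rank--nullity theorem $\dim\ker\lambda\ge m-1$. Every polynomial in $\ker\lambda$ vanishes at $A$ (by the definition of $\lambda$) and on $\mathcal X$ (being an element of $V$); hence any basis of $\ker\lambda$ furnishes at least $m-1$ linearly independent polynomials, lying in the linear span of $p_1,\dots,p_m$, that vanish at $A$ and on $\mathcal X$, as desired.

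For a fully explicit construction one may argue as follows. If every $p_i(A)=0$, the conclusion is immediate. Otherwise, after relabelling, assume $p_m(A)\ne 0$ and set $q_i:=p_i-\dfrac{p_i(A)}{p_m(A)}\,p_m$ for $i=1,\dots,m-1$. Each $q_i$ lies in $V$, vanishes on $\mathcal X$, and satisfies $q_i(A)=0$. The linear independence of $q_1,\dots,q_{m-1}$ follows from that of $p_1,\dots,p_m$: a relation $\sum_{i=1}^{m-1}c_iq_i=0$ rewrites as $\sum_{i=1}^{m-1}c_ip_i-\big(\sum_{i=1}^{m-1}c_i\,p_i(A)/p_m(A)\big)p_m=0$, and comparing coefficients of $p_1,\dots,p_{m-1}$ forces all $c_i=0$.

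There is no genuine obstacle here: the entire content is the observation that point evaluation is a linear functional, so imposing the single interpolation condition $p(A)=0$ can lower the dimension by at most one. The only point requiring a line of verification is the linear independence of the explicit family $q_1,\dots,q_{m-1}$, which I would handle exactly as above.
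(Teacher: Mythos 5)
Your proof is correct. The paper states this lemma as ``well-known'' and gives no proof of its own; your argument --- that evaluation at $A$ is a linear functional on the $m$-dimensional span, so its kernel has dimension at least $m-1$, together with the explicit elimination $q_i = p_i - \bigl(p_i(A)/p_m(A)\bigr)p_m$ --- is exactly the standard reasoning the authors are implicitly invoking, and both the rank--nullity step and the linear-independence check are carried out correctly.
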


\section {Main results and a series of results}
Let us start with the first result of a series of results:
\begin{thm}[Th.~1, \cite{HT}] \label{ht}
\textit {Assume that ${\mathcal X}$ is an $n$-independent set of $d(n, k-1)+2$ nodes lying in a curve of degree $k$ with $k\le n.$ Then the curve
is determined uniquely by these nodes.}
\end{thm}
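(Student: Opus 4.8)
The plan is to prove the equivalent statement that the space $\mathcal{P}_{k,\mathcal{X}}$ of polynomials of degree at most $k$ vanishing on $\mathcal{X}$ is one-dimensional and generated by a polynomial of degree exactly $k$; since $\mathcal{X}$ lies on a curve of degree $k$ by hypothesis, this is precisely the asserted uniqueness. First I would dispose of curves of degree strictly less than $k$: if some $q\in\Pi_{k'}$ with $1\le k'\le k-1$ vanished on all of $\mathcal{X}$, then the $n$-independent set $\mathcal{X}$ would lie on a curve of degree $k'\le n$, so Proposition~\ref{maxcurve}, $i)$, would force $\#\mathcal{X}\le d(n,k')$. But $d(n,\cdot)$ is strictly increasing on $\{0,\dots,n\}$ because $d(n,i)-d(n,i-1)=n-i+2>0$, whence $d(n,k')\le d(n,k-1)<d(n,k-1)+2=\#\mathcal{X}$, a contradiction. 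Thus every nonzero element of $\mathcal{P}_{k,\mathcal{X}}$ has degree exactly $k$, and it remains to exclude two linearly independent ones.

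The engine of the proof is a dimension count in degree $n$. Since $\mathcal{X}$ is $n$-independent it is its own maximal $n$-independent subset, so Proposition~\ref{PnX} gives
\[
\dim\mathcal{P}_{n,\mathcal{X}}=N_n-\#\mathcal{X}=N_n-d(n,k-1)-2=N_{n-k+1}-2,
\]
using $d(n,k-1)=N_n-N_{n-k+1}$. Suppose, for contradiction, that $q_1\ne q_2$ are two distinct curves of degree $k$ through $\mathcal{X}$. For every $r\in\Pi_{n-k}$ the products $q_1r$ and $q_2r$ lie in $\Pi_n$ and vanish on $\mathcal{X}$, so the subspace $V:=q_1\Pi_{n-k}+q_2\Pi_{n-k}$ is contained in $\mathcal{P}_{n,\mathcal{X}}$. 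I would compute $\dim V$ through the greatest common divisor: write $c=\gcd(q_1,q_2)$ with $\deg c=j\le k-1$ and $q_i=c\,r_i$, so that $r_1,r_2$ are coprime of degree $k-j\ge 1$. A polynomial lies in $q_1\Pi_{n-k}\cap q_2\Pi_{n-k}$ exactly when it is a multiple of $c\,r_1r_2$ (of degree $2k-j$) by an element of $\Pi_{n-2k+j}$; hence that intersection has dimension $N_{(n-2k+j)^+}$, and
\[
\dim V=2N_{n-k}-N_{(n-2k+j)^+}.
\]

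It then suffices to show $\dim V>\dim\mathcal{P}_{n,\mathcal{X}}$, which, using $N_{n-k+1}-N_{n-k}=n-k+2$, reduces to the single inequality $N_{n-k}-N_{(n-2k+j)^+}>n-k$. When $n-2k+j<0$ the subtracted term vanishes and the claim becomes $N_{n-k}=\tfrac12(n-k+1)(n-k+2)>n-k$, which is immediate. When $n-2k+j\ge 0$, setting $m=k-j\ge 1$ and $t=n-k$ (so that $n-2k+j\ge0$ means $t\ge m$) and applying $N_a-N_b=\tfrac12(a-b)(a+b+3)$, the inequality becomes $2t(m-1)>m(m-3)$, which holds since $t\ge m$ gives $2t(m-1)\ge 2m(m-1)=m(m-3)+m(m+1)>m(m-3)$. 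In either regime $\dim V>\dim\mathcal{P}_{n,\mathcal{X}}$ contradicts $V\subseteq\mathcal{P}_{n,\mathcal{X}}$, forcing $q_1=q_2$. I expect the main obstacle to be the bookkeeping around the common component: the shared factor $c$ is exactly what shrinks $V$ below the naive value $2N_{n-k}$, and the whole argument hinges on confirming that the resulting inequality $N_{n-k}-N_{(n-2k+j)^+}>n-k$ persists uniformly across every admissible degree $j$ of $\gcd(q_1,q_2)$, including the two boundary regimes treated above.
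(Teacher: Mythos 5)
Your argument is correct and complete, but it takes a genuinely different route from the source of this theorem. The paper only quotes Theorem~\ref{ht} from \cite{HT}; the proof there, like every analogous argument carried out in this paper (Lemma~\ref{mainl}, Proposition~\ref{pr3k-1}, Theorem~\ref{hkv}), is geometric: assuming two curves $q_1\neq q_2$ of degree $k$ contain ${\mathcal X}$, one enlarges ${\mathcal X}$ to a maximal $n$-independent set ${\mathcal Z}\subset q_1$ by adding $n-k$ nodes (Proposition~\ref{extcurve}), draws through each added node a line that is not a component of $q_1$, observes that $q_2\ell_1\cdots\ell_{n-k}\in\Pi_n$ vanishes on ${\mathcal Z}$, and factors via Proposition~\ref{maxcurve}, $ii)$, to force $q_2$ to be a multiple of $q_1$. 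You instead run a pure dimension count: $V=q_1\Pi_{n-k}+q_2\Pi_{n-k}\subseteq{\mathcal P}_{n,{\mathcal X}}$, with $\dim{\mathcal P}_{n,{\mathcal X}}=N_{n-k+1}-2$ by Proposition~\ref{PnX}, and $\dim V=2N_{n-k}-\dim\left(q_1\Pi_{n-k}\cap q_2\Pi_{n-k}\right)$ computed exactly through $\gcd(q_1,q_2)$ in the UFD $\Pi$; the resulting inequality $N_{n-k}-N_{n-2k+j}>n-k$ does hold in both regimes, including the edge cases $m=1$ and $n=k$. Your version buys self-containedness: no auxiliary nodes, no lines, no continuity or genericity arguments, only Proposition~\ref{PnX} and unique factorization, and a possible common component of $q_1,q_2$ is handled by bookkeeping rather than by geometry. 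The paper's technique buys scalability: it is the engine behind Theorems~\ref{ht2}, \ref{hk} and \ref{mainth}, where one must not only bound $\dim{\mathcal P}_{k,{\mathcal X}}$ but also extract the structure of the extremal sets (almost all of ${\mathcal X}$ on a maximal curve of lower degree), which a bare dimension count does not produce. Two minor points: in your opening step Proposition~\ref{maxcurve}, $i)$, requires a curve with no multiple components, so you should first pass to the square-free part of the hypothetical low-degree curve (same zero set, degree still at most $k-1$); and your notation $N_{(n-2k+j)^+}$ must be read as $0$, not as $N_0=1$, when $n-2k+j<0$, as you in fact do.
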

The second result in this series is the following
\begin{thm}[Th.~4.2,  \cite{HakTor}]\label{ht2}
\textit {Assume that ${\mathcal X}$ is an $n$-independent set of $d(n, k-1)+1$ nodes with $2\le k\le n-1.$ Then at most two different curves of degree
$\le k$ may pass through all the nodes of ${\mathcal X}.$ Moreover, there are such two curves for the set ${\mathcal X}$ if and only if all the nodes of ${\mathcal X}$ but one lie in a maximal curve of degree $k-1.$}
\end{thm}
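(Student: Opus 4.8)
The plan is to reformulate the assertion as a computation of the single number $m:=\dim\Pset_{k,\Xset}$, the number of linearly independent curves of degree $\le k$ through $\Xset$, and to show $m\le 2$ with $m=2$ forcing the stated configuration. Since $\Xset$ is $n$-independent, Proposition \ref{PnX} gives $\dim\Pset_{n,\Xset}=N_n-\#\Xset=N_{n-k+1}-1$, and this number will govern everything. The first observation I would record is that every nonzero $q\in\Pset_{k,\Xset}$ has degree \emph{exactly} $k$: if $\deg q=k'\le k-1$, then $q\,\Pi_{n-k'}\subseteq\Pset_{n,\Xset}$ has dimension $N_{n-k'}\ge N_{n-k+1}>\dim\Pset_{n,\Xset}$, which is impossible. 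In particular $\Pset_{k,\Xset}\neq\{0\}$ forces $\Xset$ onto a curve of degree $k$, and the same multiplication trick shows that no two elements of $\Pset_{k,\Xset}$ are coprime: two coprime curves $q_1,q_2$ of degree $k$ would make $q_1\Pi_{n-k}+q_2\Pi_{n-k}$, of dimension $2N_{n-k}-N_{n-2k}$, sit inside $\Pset_{n,\Xset}$, and a short estimate resting on $d(n-k,k)>n-k+1$ for $2\le k\le n-1$ rules this out.

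To prove $m\le 2$ I would argue by contradiction and reduce to Theorem \ref{ht}. Suppose $m\ge 3$. As $\#\Xset=d(n,k-1)+1<N_n$, Lemma \ref{ext} lets me adjoin one node to get an $n$-independent set $\Xset'=\Xset\cup\{A'\}$ with $\#\Xset'=d(n,k-1)+2$. By Lemma \ref{2cor}, among the $m$ independent curves vanishing on $\Xset$ at least $m-1\ge 2$ independent ones also vanish at $A'$, so $\dim\Pset_{k,\Xset'}\ge 2$; since $\Pset_{k,\Xset'}\subseteq\Pset_{k,\Xset}$, by the first paragraph these have degree exactly $k$. Thus $\Xset'$ is an $n$-independent set of $d(n,k-1)+2$ nodes lying on a curve of degree $k$ through which more than one curve passes, contradicting the uniqueness in Theorem \ref{ht}. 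Hence $m\le 2$.

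For the characterization I would treat the two implications separately. If all nodes of $\Xset$ but one, say $A$, lie on a maximal curve $\mu$ of degree $k-1$, then Proposition \ref{maxcor} gives $p=\mu s$ for every $p\in\Pset_{k,\Xset}$, where $\deg p\le k$ forces $s\in\Pi_1$; together with $p(A)=0$ and $\mu(A)\ne 0$ this yields $s(A)=0$, so $\Pset_{k,\Xset}=\mu\cdot\{s\in\Pi_1:s(A)=0\}$ is exactly two-dimensional. Conversely, assume $m=2$, write $\Pset_{k,\Xset}=\langle q_1,q_2\rangle$, and set $g=\gcd(q_1,q_2)$, nonconstant by the non-coprimality above; writing $q_i=g h_i$ with $h_1,h_2$ coprime of degree $j=k-\deg g\ge 1$, the space $g\bigl(h_1\Pi_{n-k}+h_2\Pi_{n-k}\bigr)$, of dimension $2N_{n-k}-N_{n-k-j}$, lies in $\Pset_{n,\Xset}$. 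Comparison with $\dim\Pset_{n,\Xset}=N_{n-k+1}-1$ forces $N_{n-k}-N_{n-k-j}\le (n-k)+1$; since the left side is at least $(n-k)+1$ with equality only for $j=1$, I obtain $\deg g=k-1$ and $\langle h_1,h_2\rangle$ a pencil of lines through a point $A$. Every node of $\Xset$ then satisfies $g=0$ or equals $A$, and since a curve of degree $k-1$ carries at most $d(n,k-1)$ independent nodes (Proposition \ref{maxcurve}), the only possibility is that $A\in\Xset$ lies off $g$ while the remaining $d(n,k-1)$ nodes lie on $g$, making $g$ a maximal curve of degree $k-1$ through all of $\Xset$ but $A$.

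The main obstacle, and the step I would be most careful with, is this converse: extracting from the bare equality $m=2$ the rigid geometric picture. The non-coprimality of $q_1,q_2$ and the passage to the common factor $g$ are what convert a dimension count into structure, and the delicate point is the sharp inequality $N_{n-k}-N_{n-k-j}\ge (n-k)+1$ with equality exactly when $j=1$, which pins the moving part to a pencil of lines and thereby isolates the single exceptional node. Once $\deg g=k-1$ is established, recognizing $g$ as a \emph{maximal} curve is immediate from $\#\Xset=d(n,k-1)+1$ and the maximal-curve bound; the remaining degree and dimension computations are routine once the multiplication maps into $\Pset_{n,\Xset}$ have been set up.
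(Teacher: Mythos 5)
Your argument is correct, but be aware that the paper itself contains no proof of Theorem \ref{ht2}: it is quoted from \cite{HakTor}, and the paper only proves the higher-order siblings, Theorems \ref{hkv} and \ref{mainth}. Those proofs (and the original one in \cite{HakTor}) are geometric: one enlarges $\Xset$ to a maximal $n$-independent set inside one of the curves via Proposition \ref{extcurve}, threads auxiliary lines through the added nodes, and extracts factorizations from Proposition \ref{maxcor}. Your route is genuinely different and, for this particular case, leaner: everything is reduced to the single number $\dim\Pset_{k,\Xset}$ and to comparing dimensions of multiplication images sitting inside $\Pset_{n,\Xset}$, whose dimension $N_{n-k+1}-1$ comes from Proposition \ref{PnX}; the only external geometric input is Theorem \ref{ht}, invoked after adjoining one node by Lemma \ref{ext} and Lemma \ref{2cor}, which gives the bound $\dim\Pset_{k,\Xset}\le 2$ cleanly. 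What this buys is a short, self-contained, essentially linear-algebraic proof in which the key structural step --- forcing the common factor $g$ of degree exactly $k-1$ and a pencil of lines as the moving part --- drops out of the sharp inequality $N_{n-k}-N_{n-k-j}\ge n-k+1$ with equality only at $j=1$. What it does not buy is scalability: for Theorems \ref{hkv} and \ref{mainth} the vanishing polynomials need not share a common factor, and the paper's node-addition machinery becomes unavoidable; your interpretation of ``two different curves'' as $\dim\Pset_{k,\Xset}=2$ is the one consistent with Theorems \ref{hk} and \ref{mainth} and with the count $3-1=2$ in the extremal configuration, so that is not an issue.

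Two small repairs. First, coprime linear polynomials $h_1,h_2$ need not meet: they may define parallel lines, in which case there is no common point $A$; but then every node of $\Xset$ would lie on $g\in\Pi_{k-1}$, which your opening observation (no nonzero element of $\Pset_{k,\Xset}$ has degree below $k$) already excludes, so the common point exists and the argument closes. Second, your citation of Proposition \ref{maxcurve} to bound the number of independent nodes on $g$ tacitly assumes $g$ has no multiple components; it is cleaner (and sufficient) to exclude $\Xset\subset g$ by that same opening observation, after which $g$ passes through exactly $d(n,k-1)$ nodes of $\Xset$ and is maximal by Definition \ref{def:maximal}.
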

Next result is the following
\begin{thm}[Th.~3,  \cite{HK}]\label{hk}
\textit {Assume that ${\mathcal X}$ is an $n$-independent set of $d(n, k-2)+2$ nodes with $3\le k\le n-1.$ Then at most four linearly independent curves of degree
$\le k$ may pass through all the nodes of ${\mathcal X}.$ Moreover, there are such four curves for the set ${\mathcal X}$ if and only if all the nodes of ${\mathcal X}$ but one lie in a maximal curve of degree $k-2.$}
\end{thm}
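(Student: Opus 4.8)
The plan is to reformulate the claim as an upper bound on $m:=\dim{\mathcal P}_{k,\mathcal X}$, the number of linearly independent polynomials of degree $\le k$ vanishing on $\mathcal X$, and then to dissect ${\mathcal P}_{k,\mathcal X}$ through its fixed component. If $m=0$ there is nothing to prove, so I assume $m\ge 1$ and let $F$, of degree $f$, be the greatest common divisor of a basis of ${\mathcal P}_{k,\mathcal X}$. Factoring $p=F\hat p$ writes ${\mathcal P}_{k,\mathcal X}=F\cdot\widehat{\mathcal L}$, where $\widehat{\mathcal L}\subseteq\Pi_{e}$, $e:=k-f$, is a linear system of dimension $m$ with no fixed component. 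Splitting $\mathcal X=\mathcal X_F\sqcup\mathcal X_L$ into the nodes lying on $F$ and the rest, I note that since $F$ does not vanish on $\mathcal X_L$ while every member of ${\mathcal P}_{k,\mathcal X}$ vanishes on all of $\mathcal X$, each node of $\mathcal X_L$ is a base point of $\widehat{\mathcal L}$.

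Two estimates then control $m$. First, the radical of $F$ is a square-free curve of degree $\le f$ through the $n$-independent set $\mathcal X_F$, so Proposition~\ref{maxcurve}(i) yields $\#\mathcal X_F\le d(n,f)$ and hence $\#\mathcal X_L\ge d(n,k-2)+2-d(n,f)$. Second, because $\widehat{\mathcal L}$ has no fixed component, a classical general-position criterion---provable from B\'ezout's theorem and the degree-$e$ form of Proposition~\ref{maxline}---applies: no $e+1$ of the base points $\mathcal X_L$ can be collinear (they would force a common line factor) and no $2e+1$ of them can lie on a conic (a common conic factor), so $\mathcal X_L$ imposes $\#\mathcal X_L$ independent conditions on $\Pi_e$, giving $m\le N_e-\#\mathcal X_L$. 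Combining the two bounds with the identity $d(n,k-2)-d(n,f)=(k-2-f)(n-k+4)+\binom{k-2-f}{2}$ leaves only finitely many admissible pairs $(f,e)$: the forced growth of $\#\mathcal X_L$ as $f$ decreases (capped by B\'ezout, $\#\mathcal X_L\le e^{2}$) keeps $N_e-\#\mathcal X_L$ small, and a short check shows $m\le 4$ in every case, the value $4$ being attainable only when $e=2$.

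For the characterization I analyse equality. If $m=4$ then $e=2$, forcing $f=k-2$; then $\#\mathcal X_L\ge 2$, while $m\le N_2-\#\mathcal X_L=6-\#\mathcal X_L$ forces $\#\mathcal X_L=2$ and $\#\mathcal X_F=d(n,k-2)$. Equality in Proposition~\ref{maxcurve}(i) now forces the radical of $F$ to have degree exactly $k-2$, so $F=:\mu$ is a square-free maximal curve of degree $k-2$ carrying $d(n,k-2)$ of the $d(n,k-2)+2$ nodes, the remaining two nodes $\{B_1,B_2\}$ lying off it---precisely the asserted configuration. Conversely, given this configuration, $\mu$ is maximal and $\mathcal X\cap\mu=\mathcal X_F$, so by Proposition~\ref{maxcor} every $p\in\Pi_n$ vanishing on $\mathcal X_F$ is of the form $p=\mu s$; requiring $\deg p\le k$ gives $s\in\Pi_2$, and $p(B_1)=p(B_2)=0$ gives $s(B_1)=s(B_2)=0$ since $\mu(B_i)\ne 0$. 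Hence ${\mathcal P}_{k,\mathcal X}=\{\mu s:\ s\in\Pi_2,\ s(B_1)=s(B_2)=0\}$ has dimension $N_2-2=4$, so there are exactly four such curves.

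I expect the decisive step to be the second estimate, namely converting ``no fixed component'' into genuine independence of the off-curve nodes $\mathcal X_L$ on $\Pi_e$. The collinearity obstruction is immediate from Proposition~\ref{maxline}, but excluding the remaining conic special positions---which, for the small range of $n-k$ where $e$ may reach $3$, still suffices by the classical criterion for cubics---is where the real work lies; once this is in place the finite case analysis and the equality discussion are routine. A secondary technical point is the bookkeeping when $F$ is reducible and the nodes split among its components, which is handled by moving nodes onto single components via Lemma~\ref{lem44} before applying the maximal-curve results.
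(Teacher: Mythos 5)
Your fixed-component reduction $\mathcal P_{k,\mathcal X}=F\cdot\widehat{\mathcal L}$ and your converse direction are fine, and the equality configuration you reach --- \emph{two} nodes off a maximal curve of degree $k-2$, giving $\dim\Pi_2-2=4$ --- is the arithmetically correct reading of the statement (a degree-$(k-2)$ curve cannot carry $d(n,k-2)+1$ $n$-independent nodes, so ``but one'' in the printed theorem must be ``but two''). The gap is your second estimate. The criterion you invoke --- no $e+1$ base points collinear and no $2e+1$ on a conic implies $\mathcal X_L$ imposes independent conditions on $\Pi_e$ --- is not a theorem. The classical Eisenbud--Green--Harris statement (reference \cite{E} of this paper) applies only to sets of at most $2e+2$ points (and requires $e+2$ collinear, or exactly $2e+2$ on a conic, for failure); for larger sets the implication is simply false. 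For instance, the $3e$ points cut out on an irreducible cubic by a curve of degree $e$ impose only $3e-1$ conditions on $\Pi_e$ by Cayley--Bacharach, yet no $e+1$ of them are collinear and no $2e+1$ lie on a conic, and they do not violate the ``no fixed component'' exclusions in any degree. Your own bounds do not confine $\#\mathcal X_L$ to the range $\le 2e+2$: B\'ezout only gives $\#\mathcal X_L\le e^2$, and nothing forces $e\le 3$. In the extreme case $f=0$ one has $e=k$ and $\mathcal X_L=\mathcal X$; for $n=k+1$, $\#\mathcal X=\tfrac{(k-2)(k+7)}{2}+2\le k^2$, so B\'ezout excludes nothing, and for $k\ge 7$ one even has $\#\mathcal X>N_k$, so the points cannot impose independent conditions and $m\le N_e-\#\mathcal X_L$ is vacuous. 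At that point your argument has reduced the theorem to ``$\mathcal X$ imposes at least $N_k-4$ conditions on $\Pi_k$,'' which is the theorem itself, and the $n$-independence hypothesis --- the only substantive input --- is no longer being used. So ``a short check shows $m\le 4$ in every case'' is not available.

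For comparison, this theorem is imported from \cite{HK} and the present paper proves its degree-$(k-3)$ analogue (Theorem \ref{mainth}) by an entirely different mechanism: extend $\mathcal X$ to a maximal $n$-independent subset of one vanishing curve $\sigma_0$ (Proposition \ref{extcurve}), multiply a second vanishing curve $\sigma^*$ by auxiliary lines through the added nodes to land in $\Pi_n$, and use the factorization property of maximal curves (Propositions \ref{maxcurve}, \ref{maxcor}) to force explicit line and conic components of $\sigma_0$; the residual configurations are then settled by the earlier members of the series (Theorems \ref{ht}, \ref{ht2}). That machinery is precisely what substitutes for the ``imposing independent conditions'' step your proposal leaves open, and it is where the $n$-independence of $\mathcal X$ actually enters.
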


Now let us present the main result of this paper:
\begin{thm}\label{mainth}
\textit {Assume that ${\mathcal X}$ is an $n$-independent set of $d(n, k-3)+3$ nodes with $4\le k\le n-1.$ Then at most seven linearly independent curves of degree
$\le k$ may pass through all the nodes of ${\mathcal X}.$ Moreover, there are such seven curves for the set ${\mathcal X}$ if and only if all the nodes of ${\mathcal X}$ but three lie in a maximal curve of degree $k-3.$}
\end{thm}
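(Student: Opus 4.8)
The plan is to treat the two directions separately, beginning with sufficiency. Assume all nodes but three, say $B_1,B_2,B_3$, lie on a maximal curve $\mu$ of degree $k-3$, so that $\#(\mathcal X\cap\mu)=d(n,k-3)$. Since $\mathcal X$ is $n$-independent and $\mu$ is maximal, none of the $B_i$ can lie on $\mu$: otherwise $\mu$ would carry $d(n,k-3)+1$ $n$-independent nodes, contradicting Proposition~\ref{maxcurve}. Now any $p\in\Pi_k$ vanishing on $\mathcal X$ vanishes in particular on $\mathcal X\cap\mu$, so Proposition~\ref{maxcor} (applied to the maximal curve $\mu$ of degree $k-3$) forces $p=\mu s$ with $s\in\Pi_{n-(k-3)}$; comparing degrees gives $s\in\Pi_3$, while $p(B_i)=0$ together with $\mu(B_i)\neq0$ yields $s(B_i)=0$. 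Hence $\mathcal P_{k,\mathcal X}=\mu\cdot\{s\in\Pi_3:\ s(B_1)=s(B_2)=s(B_3)=0\}$, whose dimension is $N_3-3=7$, because three points always impose independent conditions on cubics. This already pins down the count as $7=N_3-3$ and explains the shape of the extremal configuration.

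For the upper bound and the forward characterization I would study the linear system $\mathcal P_{k,\mathcal X}$ through its fixed part, paralleling the proof of Theorem~\ref{hk}. The governing dichotomy is whether two general members share a common component. If they do not, then by B\'ezout their common zero locus --- which contains all $d(n,k-3)+3$ nodes --- is finite of size at most $k^2$; but $d(n,k-3)+3>k^2$ as soon as $n-k$ is large enough, so the system is forced to possess a fixed curve $\sigma$. One then argues that $\sigma$ is maximal and factors it out: every $p\in\mathcal P_{k,\mathcal X}$ vanishes on $\mathcal X\cap\sigma$, whence Proposition~\ref{maxcor} gives $p=\sigma s$, reducing the problem to curves of degree $k-\deg\sigma$ through the nodes off $\sigma$ (with Proposition~\ref{PnX} used on $\mathcal X\cap\sigma$ and its complement to keep track of the counts). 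Driving this reduction until the complementary factor has degree $3$ is what both yields the bound $7$ and, in the equality case, exhibits the maximal curve of degree $k-3$ with exactly three nodes off it.

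To cover the range where B\'ezout does not immediately force a fixed component (small $n-k$), I would induct, reducing $(n,k)$ to $(n-1,k-1)$ by peeling a maximal line. If some line $\ell$ carries $n+1$ nodes, then every $p\in\mathcal P_{k,\mathcal X}$ contains $\ell$ by Proposition~\ref{maxline}; moreover $\mathcal X\setminus\ell$ is $(n-1)$-independent, its $(n-1)$-fundamental polynomials being obtained by dividing the $n$-fundamental polynomials of $\mathcal X$ by $\ell$. Since $\#(\mathcal X\setminus\ell)=d(n-1,k-4)+3$, the inductive hypothesis applies at $(n-1,k-1)$ and the reconstructed fixed curve is $\mu=\ell\cdot\mu'$. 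The base case $k=4$, where the sought maximal curve is itself a line, is handled directly: if a maximal line exists the computation above gives $m=7$, and otherwise one shows $m\le 6$. Throughout, Theorem~\ref{intn} supplies transversal lines meeting auxiliary curves in the maximal number of distinct points, Lemma~\ref{lem44} repositions nodes onto chosen components while preserving $n$-independence, and Lemma~\ref{2cor} records how the dimension falls as nodes are adjoined.

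The hard part will be the configurations that are extremal yet carry no maximal line --- which is exactly the generic equality case, where the $d(n,k-3)$ nodes lie on a curve of degree $k-3>1$ meeting every line in fewer than $n+1$ points, so that the line-peeling induction cannot reach them. There one must show \emph{directly} that $\dim\mathcal P_{k,\mathcal X}=7$ forces the seven curves to share a common factor of degree exactly $k-3$, and that this factor is maximal with precisely three nodes off it. Orchestrating the B\'ezout bound, the transversality of Theorem~\ref{intn}, and the dimension count of Proposition~\ref{PnX} so as to produce this common factor --- rather than merely bounding the base locus --- is the step I expect to demand the most care.
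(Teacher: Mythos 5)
Your sufficiency direction is correct and coincides with the paper's own remark after the theorem: Proposition~\ref{maxcor} applied to the maximal curve $\mu$ of degree $k-3$ gives $\mathcal P_{k,\mathcal X}=\mu\cdot\mathcal P_{3,\{B_1,B_2,B_3\}}$, of dimension $10-3=7$. The paper also observes that this computation makes the upper bound ``at most seven'' a consequence of the forward implication, so everything reduces to showing that seven independent curves force the configuration. It is exactly there that your proposal has a genuine gap rather than a proof. The B\'ezout branch only applies when $d(n,k-3)+3>k^2$, which for the admissible range $k\le n-1$ fails for all $n$ up to roughly $3k/2$; and even where it applies, knowing that two general members share \emph{some} component is far from producing a single common factor of all seven curves, let alone one of degree exactly $k-3$ that is maximal for $\mathcal X$ --- the sentence ``one then argues that $\sigma$ is maximal and factors it out'' is precisely the theorem being assumed. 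The line-peeling induction covers only sets containing $n+1$ collinear nodes, which, as you yourself note, excludes the generic extremal configuration (a maximal curve of degree $k-3\ge 1$ with no line component meeting no line in $n+1$ nodes). Your final paragraph concedes that the remaining case must be handled ``directly'' but supplies no mechanism for it, so the central implication is not established.

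For comparison, the paper closes this gap by a quite different route: it first normalizes the seven curves to have exact degree $k$ and no multiple factors (Proposition~\ref{nomult}, resting on the perturbation result Proposition~\ref{eps}), shows that some member of the span differs from $\sigma_0$ by a factor of degree at least three (Proposition~\ref{factor3}), and then, in Proposition~\ref{pr3k-1}, manufactures the needed common structure by adjoining $4$ or $5$ collinear nodes on an auxiliary line $\ell_0$ disjoint from $\mathcal X$, extending to a maximal $n$-independent subset of $\sigma_0$ via Proposition~\ref{extcurve} with intersection points supplied by Theorem~\ref{intn}, and concluding that $\ell_0$ must divide $\sigma_0$ --- which yields three linearly independent curves of degree $k-1$ through $\mathcal X$. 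Theorem~\ref{hkv} (itself proved from Theorem~\ref{hk}, the previous result of the series) applied in degree $k-1$, together with Lemma~\ref{mainl} to dispose of the alternative that all nodes lie on a curve of degree $k-2$, then delivers the maximal curve of degree $k-3$. If you want to salvage your plan, the device to import is this ``auxiliary collinear nodes plus maximal-curve division'' argument, which is what actually produces a common factor in the absence of any maximal line.
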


Let us mention that the inverse implication in the ``Moreover'' part is straightforward. Indeed, assume that $d(n, k-3)$ nodes of ${\mathcal X}$ are located in a curve $\mu$ of degree $k-3$.
Therefore, the curve $\mu$ is maximal and the remaining three nodes of ${\mathcal X},$ denoted by $A, B$ and $C,$ are outside of it: $A,B,C\notin \mu.$ Hence, in view of Proposition \ref{maxcor}, we have that

\noindent $\quad{\mathcal P}_{k,{\mathcal X}}=\left\{{p : p\in \Pi_{k}}, p_{\mathcal X}=0\right\}= \left\{{q\mu : q\in \Pi_{3}}, q(A)=q(B)=q(C)=0\right\}.$

\noindent Thus we get readily that
$\dim{\mathcal P}_{k,{\mathcal X}}=\dim\left\{{q\in \Pi_{3}}: q(A)=q(B)=q(C)=0\right\}$ $=\dim{\mathcal P}_{3, \{A,B,C\}}=10-3=7. $
Note that in the last equality we use Proposition \ref{PnX} and the fact that any three nodes are $3$-independent.

We get also that it is enough to prove only the ``Moreover'' part. Indeed, assume that the ``Moreover'' part is proved.  Assume also that there are $\ge 7$ linearly independent curves satisfying the conditions mentioned in Theorem \ref{mainth}.  Then, as we showed above, we have that $\dim{\mathcal P}_{k,{\mathcal X}}=7,$ i.e., there are exactly $7$ such curves, Q.E.D.

It is worth mentioning that in the proof of Theorem \ref{mainth} we prove some interesting version of Theorem \ref{hk}, where we increase  the number of nodes by one and decrease  the number of linearly independent curves by one:

\begin{thm}\label{hkv}
\textit {Assume that ${\mathcal X}$ is an $n$-independent set of $d(n, k-2)+3$ nodes with $3\le k\le n-2.$ Then at most three linearly independent curves of degree
$\le k$ may pass through all the nodes of ${\mathcal X}.$ Moreover, there are such three curves for the set ${\mathcal X}$ if and only if all the nodes of ${\mathcal X}$  lie in a  curve of degree $k-1,$ or all the nodes of  ${\mathcal X}$ but three lie in a (maximal) curve of degree $k-2.$}
\end{thm}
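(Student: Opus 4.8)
The plan is to prove Theorem \ref{hkv} by reducing the dimension count of $\mathcal{P}_{k,\mathcal{X}}$ through careful analysis of the curves through $\mathcal{X}$. Since $\#\mathcal{X}=d(n,k-2)+3$, I first compute the baseline: if all nodes lie on a curve $\mu$ of degree $k-1$ (maximal), then by Proposition \ref{maxcor} every $p\in\Pi_k$ vanishing on $\mathcal{X}$ factors as $p=\ell\mu$ with $\ell\in\Pi_1$, so $\dim\mathcal{P}_{k,\mathcal{X}}=\dim\Pi_1=3$. Likewise, if $d(n,k-2)$ nodes lie on a maximal curve $\sigma$ of degree $k-2$ with the three remaining nodes $A,B,C\notin\sigma$, then $p=q\sigma$ with $q\in\Pi_2$, $q(A)=q(B)=q(C)=0$, giving $\dim\mathcal{P}_{k,\mathcal{X}}=\dim\{q\in\Pi_2:q(A)=q(B)=q(C)=0\}=6-3=3$ (using that any three nodes are $2$-independent). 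This establishes the reverse implication and pins down the target value $3$.

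For the forward direction, I would assume $\dim\mathcal{P}_{k,\mathcal{X}}\ge 3$ and deduce the structural conclusion. The strategy is to lean on Theorem \ref{hk}: by Lemma \ref{2cor}, removing one node $A$ from $\mathcal{X}$ produces a set $\mathcal{X}'=\mathcal{X}\setminus\{A\}$ of $d(n,k-2)+2$ nodes for which at least $\dim\mathcal{P}_{k,\mathcal{X}}+1\ge 4$ linearly independent curves of degree $\le k$ vanish. Theorem \ref{hk} then forces the extremal configuration for $\mathcal{X}'$: all nodes of $\mathcal{X}'$ but one lie on a maximal curve $\sigma$ of degree $k-2$. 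The core of the argument is to analyze how the removed node $A$ and the one exceptional node of $\mathcal{X}'$ interact with $\sigma$, and to show that the only ways to achieve the full count $3$ for $\mathcal{X}$ itself are the two listed alternatives (all of $\mathcal{X}$ on a degree $k-1$ curve, or all but three on $\sigma$). Here I would run over the choice of which node to delete and use Lemma \ref{lem44} to reposition nodes off unwanted components when needed, and use Theorem \ref{ht} to argue uniqueness of the degree-$(k-1)$ or degree-$(k-2)$ curve once enough nodes are shown to lie on it.

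The main obstacle will be the case analysis distinguishing the two outcomes. After applying Theorem \ref{hk} to $\mathcal{X}'$, the exceptional node (call it $D$) may or may not lie on $\sigma$, and the deleted node $A$ may or may not lie on $\sigma$; I must show that whenever $\dim\mathcal{P}_{k,\mathcal{X}}=3$, either the three off-$\sigma$ nodes among $\{A,D\}$ plus one more collapse into the ``all but three'' pattern, or the presence of extra vanishing curves forces all of $\mathcal{X}$ onto a single degree-$(k-1)$ curve $\mu$. The delicate point is ruling out intermediate configurations: I expect to argue that if exactly one or two nodes lie off $\sigma$, the dimension of $\mathcal{P}_{k,\mathcal{X}}$ drops below $3$, which requires a precise dimension count for the spaces $\{q\sigma:q\in\Pi_2,\ q\text{ vanishes at the off-}\sigma\text{ nodes}\}$ combined with any curves not divisible by $\sigma$. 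Controlling those non-$\sigma$ curves—showing they can exist only when all nodes fit a degree-$(k-1)$ curve—is where I anticipate the real work, and I would handle it by choosing the deleted node $A$ judiciously so that the maximality and uniqueness results apply cleanly.
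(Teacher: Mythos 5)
Your reverse implication and the ``at most three'' bound are fine and match the paper (the latter is exactly the paper's opening move: four curves through $\mathcal X$ would give four through a $d(n,k-2)+2$-point subset, Theorem \ref{hk} then puts all but three nodes on a maximal curve of degree $k-2$, and the resulting dimension count $6-3=3$ contradicts having four). The forward direction, however, rests on a step that is false. You invoke Lemma \ref{2cor} to claim that deleting a node $A$ from $\mathcal X$ yields a set $\mathcal X'$ with $\dim\mathcal P_{k,\mathcal X'}\ge \dim\mathcal P_{k,\mathcal X}+1\ge 4$. Lemma \ref{2cor} goes in the opposite direction: adding a point costs at most one dimension. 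Deleting a point gains a dimension only if $A$ has a $k$-fundamental polynomial relative to $\mathcal X$, and here $\#\mathcal X=d(n,k-2)+3$ typically far exceeds $N_k=\dim\Pi_k$, so $\mathcal X$ is nowhere near $k$-independent and most nodes have no such fundamental polynomial; in general $\mathcal P_{k,\mathcal X\setminus\{A\}}=\mathcal P_{k,\mathcal X}$.

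This is not a repairable technicality but the reason the theorem has \emph{two} alternatives. Since $k\le n-2$ gives $d(n,k-1)-d(n,k-2)=n-k+3\ge 5$, all of $\mathcal X$ can sit on an irreducible curve $\mu$ of degree $k-1$ with no curve of degree $k-2$ through $d(n,k-2)$ of the nodes (Bezout: a degree-$(k-2)$ curve meets $\mu$ in at most $(k-1)(k-2)<d(n,k-2)$ points). There $\dim\mathcal P_{k,\mathcal X}=3$ via $\mu,x\mu,y\mu$, yet your reduction --- if it worked --- would force the ``all but three on a maximal degree-$(k-2)$ curve'' conclusion; what actually happens is that no deletion produces a fourth curve, so the reduction to Theorem \ref{hk} never gets off the ground. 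The paper's argument is structurally different and much heavier: assuming all three curves $\sigma_0,\sigma_0',\sigma_0''$ have exact degree $k$ (otherwise the degree-$(k-1)$ alternative holds at once), it extends $\mathcal X$ to a maximal $n$-independent set $\mathcal Z\subset\sigma_0$ by $2(n-k)+2$ nodes $\mathcal A$, shows via auxiliary conics and lines (Proposition \ref{maxcorr} is not used here; rather Propositions \ref{extcurve}, \ref{maxcurve} and Lemma \ref{lem44}) that $\mathcal A$ is forced onto a conic component $\beta_0$ of $\sigma_0$, so $\sigma_0=\beta_0 q$ with $q\in\Pi_{k-2}$ maximal for $\mathcal X$; the remaining branch, where line components of the $\sigma$'s satisfy the ``$(-1)$-node condition,'' is closed with Theorems \ref{ht} and \ref{ht2}. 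None of that machinery is present or replaceable in your outline, so the proof as proposed does not go through.
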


 \section{Some preliminaries}

We will start the proof of Theorem \ref{mainth} in Section \ref{proofs}. Since then we need to do considerable amount of preliminary work.
\begin{lem}\label{mainl}
\textit {Assume that the hypotheses of Theorem \ref{mainth} hold and assume additionally that there is a curve $\sigma_0\in \Pi_{k-2}$ passing through all the nodes of ${\mathcal X}.$
Then all the nodes of ${\mathcal X}$ but three (collinear) lie in a maximal curve $\mu$ of degree $k-3.$}
\end{lem}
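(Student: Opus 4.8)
The plan is to first pin $\sigma_0$ down completely, then to show that it splits off a line carrying exactly three of the nodes, and finally to deduce that those three are collinear.

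First I would normalize $\sigma_0$. If some curve of degree $\le k-3$ passed through all of $\mathcal X$, then, after discarding multiple components, we would obtain a reduced curve of degree $\le k-3$ containing the $d(n,k-3)+3$ nodes of $\mathcal X$; since $d(n,j)$ is increasing in $j$ for $j\le n$, this curve would carry more than $d(n,k-3)$ nodes, contradicting the $n$-independence of $\mathcal X$ by Proposition~\ref{maxcurve}, $i)$. Hence the minimal degree of a curve through $\mathcal X$ is exactly $k-2$, and $\sigma_0$ has no multiple components. Moreover, any $d(n,k-3)+2=d(n,(k-2)-1)+2$ of the nodes lie in the degree-$(k-2)$ curve $\sigma_0$, so by Theorem~\ref{ht} this is the unique curve of degree $k-2$ through them; consequently $\dim\mathcal P_{k-2,\mathcal X}=1$ and $\sigma_0$ is the only curve of degree $\le k-2$ through $\mathcal X$. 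This uniqueness is the property I will lean on repeatedly.

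The heart of the argument, and the step I expect to be the main obstacle, is to prove that $\sigma_0$ factors as $\sigma_0=\ell\mu$, where $\ell$ is a line containing exactly three nodes of $\mathcal X$ and $\mu$ is a curve of degree $k-3$ containing the remaining $d(n,k-3)$ nodes. I would approach this by extending $\mathcal X$ to a maximal $n$-independent set $\wti{\mathcal X}\subset\sigma_0$ of cardinality $d(n,k-2)$ via Proposition~\ref{extcurve}, which by Proposition~\ref{maxcor} turns $\sigma_0$ into a maximal curve with a controlled residual. Applying Theorem~\ref{hkv} with $k$ replaced by $k-1$ (legitimate since $3\le k-1\le n-2$) to the $d(n,k-3)+3$ nodes, the disjunct ``all nodes lie in a curve of degree $k-2$'' holds, whence there are exactly three linearly independent curves of degree $\le k-1$ through $\mathcal X$; as the products $\ell'\sigma_0$ with $\ell'\in\Pi_1$ already span a three-dimensional space, \emph{every} degree-$(k-1)$ curve through $\mathcal X$ must contain $\sigma_0$ as a component. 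The task is then to convert this rigidity, together with a B\'ezout count (which, at least for large $n$, forces any degree-$(k-3)$ curve carrying $d(n,k-3)$ of the nodes to share a component with $\sigma_0$), into the splitting $\sigma_0=\ell\mu$ with the precise node distribution $(d(n,k-3),3)$. Ruling out four or more nodes on the linear component, so that the distribution is exactly this, is the delicate point, and is where I expect the node-replacement Lemma~\ref{lem44} and careful maximal-set bookkeeping to be required.

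Granting the splitting, the collinearity of the three exceptional nodes $A,B,C$ (those off $\mu$) follows cleanly. Since $\mu$ carries $d(n,k-3)$ nodes it is maximal by Definition~\ref{def:maximal}. I would form the degree-$(k-2)$ curve $\ell_{AB}\mu$, where $\ell_{AB}$ is the line through $A$ and $B$: it vanishes on the $d(n,k-3)$ nodes lying on $\mu$ and on $A,B$, hence on all of $\mathcal X\setminus\{C\}$. But $\mathcal X\setminus\{C\}$ is an $n$-independent set of $d(n,k-3)+2=d(n,(k-2)-1)+2$ nodes lying on the degree-$(k-2)$ curve $\sigma_0$, so by Theorem~\ref{ht} the curve of degree $k-2$ through it is unique; therefore $\ell_{AB}\mu=\sigma_0$. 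Consequently $C\in\sigma_0=\ell_{AB}\mu$, and since $C\notin\mu$ we conclude $C\in\ell_{AB}$, i.e. $A,B,C$ are collinear, which completes the proof.
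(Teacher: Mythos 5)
Your opening normalization and your closing collinearity argument are both sound (and getting collinearity via the uniqueness statement of Theorem~\ref{ht} is a clean alternative to the paper, where collinearity simply falls out of the factorization). But there is a genuine gap at exactly the point you flag as ``the heart of the argument'': the splitting $\sigma_0=\ell\mu$ with $\mu\in\Pi_{k-3}$ carrying $d(n,k-3)$ nodes is never established. Phrases like ``the task is then to convert this rigidity \dots into the splitting'' and ``is where I expect Lemma~\ref{lem44} \dots to be required'' describe a goal, not an argument, and the ingredients you assemble beforehand do not suffice: the fact that $\dim{\mathcal P}_{k-1,{\mathcal X}}=3$ with the space spanned by the $\ell'\sigma_0$ is automatic once $\sigma_0$ is unique and carries no further information, while the B\'ezout remark presupposes a degree-$(k-3)$ curve through $d(n,k-3)$ of the nodes, which is precisely what must be produced.

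More seriously, the route you sketch never invokes the hypothesis that \emph{seven} linearly independent curves of degree $\le k$ pass through ${\mathcal X}$ (which is what ``the hypotheses of Theorem~\ref{mainth} hold'' amounts to here, since only the ``Moreover'' part is at stake). Without it the lemma is false: take $\sigma_0$ irreducible of degree $k-2$ and ${\mathcal X}$ any $n$-independent set of $d(n,k-3)+3$ nodes on it; then no line component can split off and, by B\'ezout, no curve of degree $k-3$ can contain $d(n,k-3)$ of the nodes. Any correct proof must therefore use the seven curves, and the paper does so at the decisive moment: it extends ${\mathcal X}$ to a maximal $n$-independent set in $\sigma_0$ by $n-k+1$ nodes $A_0,\dots,A_{n-k}$ (Proposition~\ref{extcurve}, adjusted via Lemma~\ref{lem44}), multiplies an arbitrary $\sigma^*\in{\mathcal P}_{k,{\mathcal X}}$ by the line $\ell_{01}$ through $A_0,A_1$ and by auxiliary lines through the remaining $A_i$, and divides by $\sigma_0$ (Proposition~\ref{maxcurve}) to get $\sigma^*\ell_{01}=\sigma_0\gamma$ with $\gamma\in\Pi_3$. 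If $\ell_{01}$ were not a component of $\sigma_0$, every $\sigma^*$ would equal $\sigma_0$ times a conic, giving $\dim{\mathcal P}_{k,{\mathcal X}}\le 6<7$, a contradiction; hence $\sigma_0=\ell_{01}\sigma_{k-3}$, all the added nodes land on $\ell_{01}$, and $\mu=\sigma_{k-3}$ is forced to be maximal. That contradiction with the count of seven is the engine that produces the line component, and it is absent from your proposal.
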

\begin{proof}
First note that the curve $\sigma_0$ is of exact degree $k-2,$ since it passes through more than $d(n, k-3)$ $n$-independent nodes. This implies also that $\sigma_0$ has no multiple components.
Therefore, in view of Proposition \ref{extcurve}, we can enlarge the set ${\mathcal X}$ to  a maximal $n$-independent set ${\mathcal Z}\subset \sigma_0,$ by adding $d(n,k-2)-d(n,k-3)-3=n-k+1$ nodes, i.e.,
${\mathcal Z} = {\mathcal X}\cup{\mathcal A},\ \hbox{where}\ {\mathcal A}=\{A_0,\ldots,A_{n-k}\}.$

In view of Lemma \ref{lem44}, $i)$, we may suppose that the nodes from ${\mathcal A}$ are not intersection points of the components of $\sigma_0.$

Next, we are going to prove that these $n-k+1$ nodes are collinear together with $m\ge 3$ nodes from ${\mathcal X}.$
To this end denote the line through the nodes $A_0$ and $A_{1}$ by $\ell_{01}.$ Then for each $i=2\ldots,n-k,$ choose a line $\ell_i$ passing through the node $A_{i},$ which is not a component of $\sigma_0.$ We require also that $\ell_i$ does not pass through other nodes of $\Aset$ and therefore the lines are distinct.

 Now suppose that $\sigma^*\in\Pi_k$ vanishes on ${\mathcal X}.$  Consider the polynomial  $p=\sigma^*\ell_{01}\ell_2\cdots\ell_{n-k}.$ We have that $p\in\Pi_n$ and $p$ vanishes on the node set ${\mathcal Z},$ which is a maximal $n$-independent set in the curve $\sigma_0.$ Therefore, we obtain that 
$$p=\sigma^*\ell_{01}\ell_2\cdots\ell_{n-k}=\sigma_0 r,\ \hbox{where}\ r\in\Pi_{n-k+2}.$$
The lines $\ell_i,\ i=2,\ldots,n-k,$ are not components of $\sigma_0.$ Therefore, they are  components of the polynomial $r.$ Hence we obtain that
$$\sigma^*\ell_{01}=\sigma_0\gamma,\ \hbox{where}\ \gamma\in\Pi_{3}.$$
Now let us verify that $\ell_{01}$ is a component of $\sigma_0.$ Indeed, otherwise it is a component of the cubic $\gamma$ and we get that
$$\sigma^*\in \Pi_k, \ \sigma^*\big\vert_{{\mathcal X}} = 0
 \ \implies\ \sigma^*=\sigma\beta,\ \hbox{where}\ \beta\in\Pi_{2}.$$
Therefore, we obtain that $\dim{\mathcal P}_{k,\mathcal X}\le 6,$ which contradicts the hypothesis.

Thus we have that
\begin{equation}\label{01k-3}\sigma_0=\ell_{01}\sigma_{k-3},\ \hbox{where}\ \sigma_{k-3}\in\Pi_{k-3}.\end{equation}

Now let us show that all the nodes of ${\mathcal A}$ belong to $\ell_{01}.$
Suppose conversely that a node from ${\mathcal A},$ say $A_2,$ does not belong to the line $\ell_{01}.$ Then in the same way
as in the case of the line $\ell_{01}$ we get that $\ell_{02}$ is a component of $\sigma_0.$ Therefore the node $A_0$ is an intersection point of two components of $\sigma_0,$ i.e., $\ell_{01}$
and $\ell_{02},$ which contradicts our assumption.

Thus we get that $\Aset\subset \ell_{01}.$ Note that $\ell_{01}$ is not a component of $\sigma_{k-3}$ since then it will be a multiple component of $\sigma_0.$ 

Next, let us verify that when enlarging the set $\Xset\subset \sigma_0$ to an $n$-maximal set one has to locate the added nodes outside the component $\sigma_{k-3}.$ Indeed, what was proved already implies that the only possible location of such a node in $\sigma_{k-3}$ is an intersection point with $\ell_{01}.$
 But in the latter case, by using Lemma \ref{lem44},  we can replace the node, say $A_1,$ with one belonging only to the component $\sigma_{k-3},$ say $A_1',$which is a contradiction. Indeed, again $A_0$ is the intersection point of two components of $\sigma_0,$ the line through $A_0, A_1$ and the line through $A_0, A_1'.$

Hence, in view of Proposition \ref{extcurve} we get that $\mu=\sigma_{k-3}$ is a maximal curve for $\Xset.$ Therefore, it vanishes at exactly $d(n,k-3)$ nodes of $\Xset.$ The remaining three nodes, according to \eqref{01k-3}, belong to the line $\ell_{01}.$
\end{proof}

The next result we prove with tools of mathematical analysis. 
\begin{proposition}\label{eps} Assume that $p_1,p_2\in\Pi,$  $\deg p_2\le \deg p_1+1,$ and $p_1$ has no multiple factors. Then, for sufficiently small $\epsilon,$ the polynomial $p_1+\epsilon p_2$ has no multiple factors either.
\end{proposition}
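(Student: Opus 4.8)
The plan is to reduce the statement to a closedness-plus-continuity argument by passing to homogeneous coordinates, where the degree of the perturbed polynomial no longer jumps as $\epsilon$ varies. Write $d=\deg p_1$, so that $\deg p_2\le d+1$, and put $D=\max(\deg p_1,\deg p_2)$. Since a real polynomial is square-free over $\mathbb{R}$ if and only if it is square-free over $\mathbb{C}$, I may work over $\mathbb{C}$ throughout. Homogenize each polynomial to degree $D$ by inserting a third variable $z$, and let $F_\epsilon(x,y,z)$ be the degree-$D$ homogenization of $p_1+\epsilon p_2$; then $\epsilon\mapsto F_\epsilon$ is an affine-linear, hence continuous, map into the (finite-dimensional) space of ternary forms of degree $D$. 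The observation underlying everything is that, for the relevant range of $\epsilon$, the form $F_\epsilon$ is the genuine homogenization of $p_1+\epsilon p_2$ (the line $z=0$ is not a spurious component), and in that case $p_1+\epsilon p_2$ is square-free in $\mathbb{C}[x,y]$ if and only if $F_\epsilon$ is square-free in $\mathbb{C}[x,y,z]$, because homogenization and dehomogenization induce a bijection between the irreducible factorizations.

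The key step is to show that the set $\mathcal{N}_D$ of non-square-free forms of degree $D$ is closed. For each $e$ with $1\le e\le \lfloor D/2\rfloor$, the assignment $(G,H)\mapsto G^2H$ is a morphism from the product of the projective spaces of forms of degrees $e$ and $D-2e$ into the projective space of forms of degree $D$; its image is closed because projective varieties are proper, and $\mathcal{N}_D$ is the finite union of these images over $e$. Hence $\mathcal{N}_D$ is closed and the square-free locus is open. Equivalently, after a generic rotation of coordinates making the leading form of $p_1$ monic in $y$, one can phrase this with the tools of analysis: the discriminant $\mathrm{Disc}_y(F_\epsilon)$ is a polynomial in the coefficients of $F_\epsilon$, hence a polynomial in $\epsilon$, and one invokes the elementary fact that a polynomial in $\epsilon$ that is nonzero at $\epsilon=0$ stays nonzero for $\epsilon$ near $0$.

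It remains to verify that the unperturbed form $F_0$ lies in this open set, and here I separate two cases according to $\deg p_2$. If $\deg p_2\le d$, then $D=d$ and $F_0=P_1$, the degree-$d$ homogenization of $p_1$; since $P_1(x,y,0)$ is the leading form of $p_1$, we have $z\nmid P_1$, so $P_1$ is square-free precisely because $p_1$ is. If $\deg p_2=d+1$, then $D=d+1$ and $F_0=zP_1$; again $z\nmid P_1$, so $z$ is a simple factor coprime to the distinct factors of the square-free $P_1$, whence $zP_1$ is square-free. In both cases $F_0\notin\mathcal{N}_D$. Finally, for $\epsilon=0$ the conclusion is the hypothesis itself; for small $\epsilon\neq0$ one checks that $\deg(p_1+\epsilon p_2)=D$, so that $F_\epsilon$ is a genuine homogenization, and by continuity of $\epsilon\mapsto F_\epsilon$ together with the openness of the square-free locus we get $F_\epsilon\notin\mathcal{N}_D$ for all sufficiently small $\epsilon$; the correspondence above then yields that $p_1+\epsilon p_2$ is square-free.

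The point I expect to demand the most care is precisely the degree jump when $\deg p_2=d+1$: as $\epsilon\to0$ the curve loses a branch at infinity, so one cannot homogenize to a single small degree and argue by naive continuity. Homogenizing to degree $d+1$ cures this, and the argument hinges on the clean fact that the limiting form $zP_1$ remains square-free, the extra factor $z$ being the line at infinity appearing as a new simple component rather than a repeated one. This is also what marks $\deg p_2\le\deg p_1+1$ as the natural boundary for the method: with a larger gap the limiting form would acquire the repeated factor $z^2$, so $F_0$ itself would fail to be square-free and the continuity argument would no longer start inside the open square-free locus.
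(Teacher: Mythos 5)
Your argument is correct, but it takes a genuinely different route from the paper's. The paper argues by contradiction with sequences: assuming $p_1+\epsilon_n p_2=q_nr_n^2$ with $\epsilon_n\to0$, it fixes the degrees of $q_n,r_n$ along a subsequence, normalizes the coefficients of $r_n$, and applies Bolzano--Weierstrass, splitting into two cases according to whether the coefficients of $q_n$ stay bounded; the limit yields either $p_1=q_0r_0^2$ with $\deg r_0\ge1$ or $0=q_*r_*^2$ with $q_*,r_*\neq0$, a contradiction either way. In effect this is a hand-made proof that the non-square-free locus is closed in a suitably normalized coefficient space, with the degree bookkeeping $\deg q_n+2\deg r_n\le\deg p_1+1$ absorbing the possible degree jump. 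You obtain the same closedness statement at one stroke by homogenizing to the fixed degree $D=\max(\deg p_1,\deg p_2)$, so that $\epsilon\mapsto F_\epsilon$ becomes affine-linear, and observing that the non-square-free forms of degree $D$ are a finite union of images of the proper maps $(G,H)\mapsto G^2H$, hence closed; your identification of the limiting form $F_0=zP_1$ as still square-free when $\deg p_2=\deg p_1+1$ is precisely what makes the hypothesis sharp, and it parallels the paper's remark that $p_2=xp_1$ breaks the irreducibility analogue. What the paper's approach buys is self-containedness (nothing beyond Bolzano--Weierstrass); what yours buys is brevity and a transparent explanation of where the degree hypothesis comes from. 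One small caution: your alternative phrasing via $\mathrm{Disc}_y$ needs a generic linear change of all three projective coordinates rather than a rotation of $(x,y)$ only, since when $D=\deg p_1+1$ the form $F_0=zP_1$ has vanishing $y^{D}$-coefficient for every choice of affine axes; the properness argument you lead with is unaffected by this.
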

\begin{proof} Assume by way of contradiction that there is a  sequence $\epsilon_n$ such that
\begin{equation}\label{qr}p_1+\epsilon_n p_2=q_n r_n^2,\ \hbox{where}\ q_n,r_n\in \Pi,\ \deg r_n\ge 1,\hbox{and}\ \epsilon_n\rightarrow 0.\end{equation}
We have that $\deg(p_1+\epsilon_n p_2)\le\max (\deg p_1,\deg p_2),$ and hence
\begin{equation}\label{degp+1}\deg q_n+2 \deg r_n\le\max (\deg p_1,\deg p_2)\le \deg p_1+1.\end{equation}
We deduce from here that there is a subsequence $n_k$ such that
$$\deg q_{n_k}=m_1=const. \ \hbox{and}\ \deg r_{n_k}=m_2=const.$$

Without loss of generality assume that 
\begin{equation}\label{sub}\{\epsilon_n\}\equiv \{\epsilon_{n_k}\}.\end{equation}
Thus we have that
$$q_{n}=\sum_{i+j\le m_1}a_{ij}^{(n)}x^iy^j,\quad r_{n}=\sum_{i+j\le m_2}b_{ij}^{(n)}x^iy^j.$$
In view of \eqref{qr}, by a normalization of $r_{n},$ i.e., by multiplying it by a constant $c$ and dividing $q_n$ by $c^2$, we may assume that
\begin{equation}\label{b1}\max|b_{ij}^{(n)}|=1\  \forall n.\end{equation}

Now, let us denote $M_n:=\max|a_{ij}^{(n)}|.$

{Case 1.} Assume that (a subsequence of) $M_n$ is bounded: $M_n\le M.$
Note that in the case of the subsequence we may use again a replacement \eqref{sub} and have that the whole sequence $M_n$ is bounded.
In this case, by using the Bolzano–Weierstrass theorem, we have for a subsequence  $\{n_k\}$  that
$$a_{ij}^{(n_k)}\rightarrow a_{ij}^0\ \hbox{and}\ b_{ij}^{(n_k)}\rightarrow b_{ij}^0, \ \forall i,j.$$
Here, we use the fact that the number of the coefficients is finite.

By setting $n=n_k$ in \eqref{qr} and tending $k\rightarrow \infty$ we obtain that
$p_1=q_0 r_0^2,$
where
$$q_{0}=\sum_{i+j\le m_1}a_{ij}^{0}x^iy^j,\quad r_{0}=\sum_{i+j\le m_2}b_{ij}^{0}x^iy^j.$$
This contradicts the hypothesis for $p_1$ if $\deg r_0\ge 1.$

Let us verify the latter inequality. Since $\deg r_n\ge 1,$ we get from \eqref{degp+1} that $\deg q_n\le \deg p_1-1.$
Therefore $m_1\le \deg p_1 -1$ and hence $\deg r_0\ge 1.$

{Case 2.} By taking into account a replacement \eqref{sub} it remains to consider the case $M_n\rightarrow +\infty.$

There are numbers $i_0,j_0,i_1,j_1$ and a subsequence $n=\{n_k\},$ such that
\begin{equation}\label{ab0}|a_{i_0j_0}^{(n_k)}|=\max_{i,j} |a_{ij}^{(n_k)}|\ \hbox{and}\ |b_{i_1j_1}^{(n_k)}|= \max_{i,j} |b_{ij}^{(n_k)}|=1\ \forall k.\end{equation}
Here, again we use   the fact that the number of the coefficients is finite. In the last equality we use \eqref{b1}.

Now, let us set $n=n_k$ in \eqref{qr} and divide both sides by $M_{n_k}$ to get
 \begin{equation}\label{qr'}{1\over {M_{n_k}}} p_1+{\epsilon_{n_k}\over {M_{n_k}}} p_2= \left({1\over{M_{n_k}}}q_{n_k}\right) r_{n_k}^2.\end{equation}
 Evidently, the left hand side here tends to zero. For the right hand side we have that the coefficients of the polynomials ${1\over{M_{n_k}}}q_{n_k}$ and $r_{n_k}$ are bounded by $1.$ As above by using  the Bolzano–Weierstrass theorem and passing to a new subsequence $\{n_k'\}\subset
\{n_k\}$  we obtain that
$${1\over{M_{n_k'}}}a_{ij}^{({n_k'})}\rightarrow a_{ij}^*\ \hbox{and}\ b_{ij}^{({n_k'})}\rightarrow b_{ij}^*, \ \forall i,j.$$ In view of \eqref{ab0} we have that
\begin{equation}\label{apq}|a_{i_0j_0}^*|=1\ \hbox{and}\ |b_{i_1j_1}^*|=1.
\end{equation}
Now, by setting $n=n_k'$ in \eqref{qr} and tending $k\rightarrow \infty$ we get that
$0=q_* r_*^2,$
where
$$q_{*}=\sum_{i+j\le m_1}a_{ij}^{*}x^iy^j,\quad r_{*}=\sum_{i+j\le m_2}b_{ij}^{*}x^iy^j.$$
In view of \eqref{apq} this is a contradiction.
\end{proof}
\begin{rem} In the same way one can prove the following statement:
\label{rem} Assume that $p_1,p_2\in\Pi,\ \deg p_2\le \deg p_1,$ and $p_1$ is not reducible. Then, for sufficiently small $\epsilon,$ the polynomial $p_1+\epsilon p_2$  is not reducible either.
\end{rem}
\noindent Note that, as the example of $p_2=xp_1$ shows, the condition $\deg p_2\le \deg p_1$  is essential here.

\noindent Next result will help to make  the hypotheses of Theorem \ref{mainth} more precise.
\begin{proposition} \label{nomult} Suppose that there are seven linearly independent polynomials from $\Pi_k$ vanishing on a set ${\mathcal X}.$
Then, there are seven linearly independent polynomials  vanishing on a set ${\mathcal X},$ each of which is of exact degree $k$ and has no multiple factors, or, alternatively there are three  linearly independent polynomials from $\Pi_{k-1}$ vanishing on ${\mathcal X}.$
\end{proposition}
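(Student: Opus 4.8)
The goal is to show that a seven-dimensional space $V:=\mathcal{P}_{k,\mathcal{X}}$ of polynomials (or, more precisely, a seven-dimensional subspace of it) can be spanned by seven polynomials each of exact degree $k$ and each without multiple factors, \emph{unless} the alternative holds, namely that $\dim\mathcal{P}_{k-1,\mathcal{X}}\ge 3$. The plan is to proceed in two stages: first arrange that all seven basis polynomials have exact degree $k$, and then perturb them to kill multiple factors while keeping them linearly independent. The second stage is exactly what Proposition \ref{eps} was built for, so the main conceptual work is the first stage.

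First I would analyze the leading (degree-$k$) homogeneous parts. Write $V=\mathcal{P}_{k,\mathcal{X}}$ with $\dim V\ge 7$; let $V'\subseteq V$ be the subspace of polynomials of degree $\le k-1$, i.e. those whose degree-$k$ homogeneous part vanishes. If $\dim V'\ge 3$, then we are exactly in the alternative case, since $V'\subseteq\Pi_{k-1}$ and these polynomials vanish on $\mathcal{X}$; so assume $\dim V'\le 2$. The map sending $p\in V$ to its degree-$k$ homogeneous part has kernel $V'$, so its image has dimension $\ge 7-2=5\ge 1$; in particular $V$ contains polynomials of exact degree $k$. More carefully, choosing a complement, I can pick a basis $p_1,\dots,p_7$ of a seven-dimensional subspace in which as many basis vectors as possible have exact degree $k$. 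The standard trick is: given any two members $p,q\in V$ with $p$ of exact degree $k$ and $q$ of degree $\le k-1$, the combination $p+q$ still has exact degree $k$; and if $q$ itself has exact degree $k$, then for all but finitely many scalars $c$ the combination $p+cq$ has exact degree $k$ as well (the degree drops only when the leading forms cancel, i.e. for at most one value of $c$ when the two leading forms are proportional). Using these two observations repeatedly, and using that the exact-degree-$k$ members span a subspace of dimension $\ge 5$, I would replace the at most two ``low-degree'' basis vectors by exact-degree-$k$ ones: for each low-degree $q_i$, add to it a suitable exact-degree-$k$ member $p_j$, obtaining $q_i+p_j$ of exact degree $k$; linear independence is preserved because this is a triangular change of basis. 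This yields seven linearly independent polynomials in $V$, each of exact degree $k$.

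Second, I would remove multiple factors by a perturbation argument. Fix the seven exact-degree-$k$ polynomials $f_1,\dots,f_7$ just obtained. I would treat them one at a time. Suppose $f_1$ has a multiple factor. Choose any $g\in V$ of exact degree $k$ that is \emph{not} a scalar multiple of $f_1$ and has no multiple factor common with the square part of $f_1$ — generically this is automatic — and consider $f_1+\epsilon g$. Here $\deg g=\deg f_1=k\le\deg f_1+1$, so Proposition \ref{eps} does not apply verbatim to $f_1+\epsilon g$; instead I would invoke it in the form that handles a single squarefree anchor. The clean way is to first establish that $V$ contains at least one exact-degree-$k$ polynomial $h$ with no multiple factor: pick $h$ of exact degree $k$, and if it has a multiple factor, perturb within the exact-degree-$k$ members using Proposition \ref{eps} (writing $h=p_1+\epsilon p_2$ with a squarefree $p_1$ obtained from a generic member of the at-least-five-dimensional space of leading forms). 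Once one squarefree anchor $h\in V$ of exact degree $k$ is in hand, I replace each $f_i$ by $f_i+\epsilon_i h$: for small $\epsilon_i$ each such sum has exact degree $k$ (same leading behavior controlled as above) and, by applying Proposition \ref{eps} with $p_1:=h$ squarefree and $p_2:=f_i$ of degree $\le k\le\deg h+1$ after rescaling, the polynomial $h+\delta f_i$ is squarefree for small $\delta$; dividing by $\delta$ shows $f_i+\delta^{-1}h$ is squarefree, and a diagonal choice of the $\epsilon_i$ keeps the seven resulting polynomials linearly independent (again a triangular/generic-perturbation change of basis).

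The step I expect to be the main obstacle is guaranteeing that the perturbations achieving squarefreeness can be chosen \emph{simultaneously} compatible with linear independence of all seven polynomials. Each individual application of Proposition \ref{eps} is for all sufficiently small $\epsilon$, but linear independence is an open condition while ``squarefree'' is only guaranteed on small neighborhoods; I would handle this by noting that both ``exact degree $k$'', ``squarefree'', and ``the seven polynomials are linearly independent'' are each Zariski-open (equivalently, complements of proper algebraic conditions on coefficients) and nonempty on the relevant parameter space of perturbations, so their intersection is nonempty. Making this precise — choosing the seven perturbation parameters from a common nonempty open set rather than seven separate limits — is the delicate bookkeeping, but it is exactly the kind of genericity argument that Proposition \ref{eps} and Remark \ref{rem} are designed to supply.
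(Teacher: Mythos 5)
Your first stage (forcing exact degree $k$) is fine and matches the paper's, which replaces the seven polynomials by $\sigma_0$ and $\sigma_0+\epsilon\sigma_i$ once one polynomial $\sigma_0$ of exact degree $k$ is known to exist. The gap is in your second stage: everything there rests on first producing a squarefree ``anchor'' $h$ of exact degree $k$ inside the space, and you obtain it only by asserting that ``generically this is automatic'' and by invoking Proposition \ref{eps}. But Proposition \ref{eps} takes squarefreeness of $p_1$ as a \emph{hypothesis}; it cannot manufacture a squarefree polynomial out of a space that contains none. Such spaces exist: if, say, ${\mathcal P}_{k,{\mathcal X}}\subseteq \ell^2\,\Pi_{k-2}$ for a line $\ell$, then every member has a multiple factor, the squarefree locus you want to intersect with your other open conditions is empty, and no choice of perturbation parameters helps. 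This is not a pathological side case --- it is exactly the situation in which the conclusion of the proposition switches to the alternative, and after stage one your argument never makes contact with the alternative at all.

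The paper's proof spends most of its effort on precisely this branch. Assuming every one of the seven polynomials has a multiple factor, it first argues (via Lemma \ref{mainl}) that each multiple factor must be a squared line, $\sigma_i=\ell_i^2 q_i$; it then passes to $\check\sigma_i=\ell_i q_i\in\Pi_{k-1}$, which still vanish on ${\mathcal X}$, and shows by a dimension count that at least three of the $\check\sigma_i$ are linearly independent: if all of them lay in the span of two, say $\check\sigma_0,\check\sigma_1$, then the original seven $\sigma_i=(A_ix+B_iy+C_i)\check\sigma_i$ would lie in the span of the six polynomials $\check\sigma_j$, $x\check\sigma_j$, $y\check\sigma_j$, $j=0,1$, contradicting their linear independence. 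That yields the alternative conclusion. Only after this dichotomy is settled, i.e., once a squarefree $\sigma_0$ of exact degree $k$ is known to exist, does the paper apply Proposition \ref{eps} to $\sigma_0+\epsilon\sigma_i$ to finish (note it perturbs all six remaining polynomials by the \emph{same} anchor $\sigma_0$, which sidesteps most of the simultaneity bookkeeping you worry about at the end). To repair your proposal you must supply the missing case: show that if no member of the seven-dimensional space is squarefree of exact degree $k$, then there are at least three linearly independent members of $\Pi_{k-1}$ vanishing on ${\mathcal X}$.
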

\begin{proof} Let $\sigma_{i}\in\Pi_k, 0\le i\le 6,$ be the given polynomials. We may assume that a polynomial, say $\sigma_0,$ is of exact degree $k.$ Indeed, if the degree of each of  seven polynomials is less than $k$ then the conclusion of Proposition holds.

Therefore  we may assume that all the polynomials $\sigma_{i}, 0\le i\le 6$ are of exact degree $k.$  Indeed,  it suffices to replace these polynomials with the seven polynomials $\sigma_0$ and $\sigma_0+\epsilon\sigma_i, 1\le i\le 6,\ \forall \epsilon\neq 0.$

Next, let us prove that a polynomial, say $\sigma_{0},$ has no multiple factors. Indeed
assume conversely that each of the seven polynomials has a multiple factor. In view of Lemma \ref{mainl} the multiple factors are lines with multiplicity two. Thus, we have that
\begin{equation}\label{l2}\sigma_i=\ell_i^2q_i,\ 0\le i \le 6,\ \hbox{where}\ \ell_i\in\Pi_1,\  q_i\in\Pi_{k-2}.\end{equation}
Then we replace
these polynomials with the seven polynomials $\check\sigma_i=\ell_i q_i\in\Pi_{k-1},\ 0\le i \le 6,$ which clearly vanish at the node set $\Xset.$ Let us verify that among these latter seven polynomials there are at least three linearly independent ones. Conversely assume that the seven polynomials are linear combinations of two  of them, say $\check\sigma_i=\ell_iq_i,\ i=0,1.$ Then we get readily that the seven linearly independent polynomails in \eqref{l2} are linear combinations of the following six polynomials:
$$\check\sigma_i,x \check\sigma_i, y\check\sigma_i,\  i=0,1,$$
which is a contradiction.
Indeed, assume that $\ell_i=A_ix+B_iy+C_i,\ i=0,\ldots,6.$ Then  for $i=0,1,$ we have that
$$
\sigma_i=\ell_i^2q_i=(A_ix+B_iy+C_i)\check\sigma_i=Ax\check\sigma_i+B_iy\check\sigma_i+C_i\check\sigma_i.$$
\noindent Now, assume that $\check\sigma_i=a_i\check\sigma_0+b_i\check\sigma_1,$  for $i=2,\ldots,6.$ Then we have that
$$\check\sigma_i=\ell_i^2q_i=(A_ix+B_iy+C_i)\check\sigma_i=
 (A_ix+B_iy+C_i)(a_i\check\sigma_0+b_i\check\sigma_1)$$
 $$=a_iA_ix\check\sigma_0+a_iB_iy\check\sigma_0
+a_iC_i\check\sigma_0+b_iA_ix\check\sigma_1+b_iB_iy\check\sigma_1+b_iC_i\check\sigma_1.$$

Finally, by assuming that  $\sigma_{0},$ has no multiple factors, let us again replace the seven polynomials $\sigma_{i}, 0\le i\le 6,$ with the seven polynomials $\sigma_0$ and $\sigma_0+\epsilon\sigma_i, 1\le i\le 6,$ for a sufficiently small $\epsilon>0.$
This, in view of Proposition \ref{eps}, completes the proof.
\end{proof}

\begin{proposition} \label{factor3} Suppose that $\sigma_i,\ i=0,\ldots,6,$ are linearly independent polynomials of exact degree $k$ and have no multiple factors. Then there is a polynomial in the linear span of  $\sigma_i,\ i=1,\ldots,6,$ which has no multiple factors and differs from $\sigma_0$ with a factor of degree at least three.
\end{proposition}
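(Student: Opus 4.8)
The plan is to produce the required polynomial as a suitably generic element $\tau=\sum_{i=1}^{6}c_i\sigma_i$ of the linear span $\Lset:=\langle\sigma_1,\dots,\sigma_6\rangle$. I read the phrase ``differs from $\sigma_0$ with a factor of degree at least three'' as the condition $\deg\gcd(\sigma_0,\tau)\le k-3$: writing $g=\gcd(\sigma_0,\tau)$ and $\sigma_0=g\alpha$, the factor $\alpha$ then has degree $k-\deg g\ge 3$, which is exactly the factor by which $\tau$ and $\sigma_0$ differ. So the two things to achieve are $(a)$ that $\tau$ has no multiple factors, and $(b)$ that $\deg\gcd(\sigma_0,\tau)\le k-3$.

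The core of the argument is a linear-algebra estimate guaranteeing $(b)$. As $\sigma_0$ has no multiple factors it is a product of finitely many distinct irreducible polynomials, hence has only finitely many divisors; let $g_1,\dots,g_N$ be those of degree $\ge k-2$. For each such $g$ put $V_g:=\{\tau\in\Lset:\ g\mid\tau\}$, a subspace of $\Lset$. I claim every $V_g$ is \emph{proper}. Indeed, if $V_g=\Lset$ then $g\mid\sigma_i$ for $i=1,\dots,6$, so $\sigma_i=g h_i$ with $h_i\in\Pi_{k-\deg g}\subseteq\Pi_2$; multiplication by $g$ being injective, the $h_i$ are linearly independent, and since $\dim\Pi_2=6$ they form a basis of $\Pi_2$. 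But $h_0:=\sigma_0/g\in\Pi_2$, so $h_0=\sum_{i=1}^{6}c_i h_i$, whence $\sigma_0=g h_0=\sum_{i=1}^{6}c_i\sigma_i$, contradicting the independence of $\sigma_0,\dots,\sigma_6$. This step is the crux, and it pins down the numerology: $\dim\Pi_2=7-1$ and the gap $k-(k-3)=3$ are precisely what make ``seven curves'' correspond to ``a factor of degree three.'' Consequently, if $\deg\gcd(\sigma_0,\tau)\ge k-2$ then $\gcd(\sigma_0,\tau)$ is one of the $g_j$ and $\tau\in V_{g_j}$; thus every $\tau$ violating $(b)$ lies in the finite union $\bigcup_{j}V_{g_j}$ of proper subspaces.

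To meet $(a)$ and $(b)$ at once I would fix $\sigma_1$, which has no multiple factors, and choose a vector $\tau_1\in\Lset\setminus\bigcup_j V_{g_j}$; such a $\tau_1$ exists because a vector space over an infinite field is never a finite union of proper subspaces. Along the line $\tau(\epsilon)=\sigma_1+\epsilon\tau_1$ two things happen. On the one hand, for each $j$ the line meets $V_{g_j}$ in at most one point (it cannot lie inside $V_{g_j}$, since its direction $\tau_1\notin V_{g_j}$), so $\tau(\epsilon)\notin\bigcup_j V_{g_j}$ for all but finitely many $\epsilon$. On the other hand, since $\deg\tau_1\le k\le\deg\sigma_1+1$, Proposition \ref{eps} gives that $\sigma_1+\epsilon\tau_1$ has no multiple factors for all sufficiently small $\epsilon$. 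Choosing $\epsilon$ small and avoiding the finitely many exceptional values yields $\tau:=\tau(\epsilon)\in\Lset$ with no multiple factors and $\deg\gcd(\sigma_0,\tau)\le k-3$; for small $\epsilon$ its leading form is a small perturbation of that of $\sigma_1$, so $\tau$ even has exact degree $k$. This $\tau$ is the desired polynomial.

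The main obstacle is the properness of the subspaces $V_g$, that is, the dimension count resting on $\dim\Pi_2=6$; one must also make sure it is applied to all finitely many ``large'' divisors of $\sigma_0$ simultaneously. The only other delicate point is merging the analytic condition $(a)$, supplied by Proposition \ref{eps}, with the linear condition $(b)$; this is handled cheaply by restricting to a single line through $\sigma_1$, which lets the perturbation lemma and the finite-union-of-subspaces fact cooperate without any appeal to the openness of the set of square-free polynomials.
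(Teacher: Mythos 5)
Your proof is correct, but it takes a genuinely different route from the paper's. The paper argues by contradiction: assuming every element of the span shares with $\sigma_0$ a common factor of degree $\ge k-2$, it invokes Lemma \ref{nom} --- a case analysis of how the degree-$\le 2$ cofactors can share linear or conic components --- to force a single divisor $\tilde\sigma_0$ with $\sigma_i=\tilde\sigma_0\beta_i$, $\beta_i\in\Pi_2$, for all seven $i$, and then derives a contradiction from $\dim\Pi_2=6<7$. You instead cover the ``bad'' set directly: since $\sigma_0$ is squarefree it has only finitely many divisors $g$ of degree $\ge k-2$; each $V_g=\{\tau : g\mid\tau\}$ is a proper subspace of the span (your dimension count is the same $\dim\Pi_2=6$ numerology as the paper's final step, but applied only to the six spanning polynomials together with $\sigma_0$, so no gcd analysis of an arbitrary element is needed); and a vector space over an infinite field is not a finite union of proper subspaces. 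Restricting to the line $\sigma_1+\epsilon\tau_1$ lets Proposition \ref{eps} supply squarefreeness for the same small $\epsilon$. What you gain is the complete elimination of Lemma \ref{nom} with its delicate subcases (reducible conics with shared lines, the alternative (i)/(ii), the pigeonhole over $k+1$ components), and a direct rather than contradictory argument; what the paper's route additionally exhibits is the explicit common component $\tilde\sigma_0$ of the degenerate configuration, which is not used elsewhere. Your reading of ``differs from $\sigma_0$ with a factor of degree at least three'' as $\deg\gcd(\sigma_0,\tau)\le k-3$ agrees with how the conclusion is applied in the proof of Proposition \ref{pr3k-1}, and your observation that $\tau$ has exact degree $k$ for small $\epsilon$ is precisely what that application requires.
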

Let us prove first
\begin{lem} \label{nom} Let $\sigma_0, s_1,s_2,$ be linearly independent polynomials of exact degree $k,$ with no multiple factors. Suppose also that any linear combination of  $s_i,\ i=1,2,$ differs from $\sigma_0$ with a factor from $\Pi_2.$ Then we have that
\begin{equation} \label{sb} \sigma_0=\tilde \sigma_0\beta_0, \ \  s_1=\tilde \sigma_0\beta_1,\ \  s_2=\tilde \sigma_0\beta_2,\ \hbox{where}\ \tilde \sigma_0\in\Pi_{k-1},\ \beta_i\in \Pi_2.\end{equation}
Moreover, $\tilde\sigma_0$ is uniquely determined
 from the first two relations  here, if $\beta_0$ and $\beta_1$ are relatively prime.
 
\noindent Furthermore, if $\beta_0$ has a common factor with $\beta_1$ and a common factor with $\beta_2$ then the following alternative takes place:
Either,

(i) $\beta_i=\ell\ell_i,\ i=0,1,2,$ i.e., they have a common linear factor, or

(ii) $\beta_0$ and $\beta_1+\epsilon\beta_2$ are relatively prime $\forall \epsilon>0.$
\end{lem}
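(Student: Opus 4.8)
The plan is to work in the polynomial ring as a unique factorization domain and to track which irreducible factors of $\sigma_0$ are shared with the members of the pencil $\{s_1+\epsilon s_2\}$. I set $\tilde\sigma_0:=\gcd(\sigma_0,s_1,s_2)$, understood as the product of the distinct irreducible factors of $\sigma_0$ that divide both $s_1$ and $s_2$; this is well defined up to a scalar because $\sigma_0,s_1,s_2$ have no multiple factors. The key observation is that every irreducible factor $\pi$ of $\sigma_0$ with $\pi\nmid\tilde\sigma_0$ can divide at most one member of the pencil: if $\pi\mid(s_1+\epsilon s_2)$ and $\pi\mid(s_1+\epsilon' s_2)$ with $\epsilon\neq\epsilon'$, then subtracting gives $\pi\mid s_2$ and hence $\pi\mid s_1$, so $\pi\mid\tilde\sigma_0$, a contradiction.

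Since $\sigma_0$ has only finitely many irreducible factors, this shows that for all but finitely many $\epsilon$ no such ``extraneous'' factor divides $s_1+\epsilon s_2$; for any such generic $\epsilon$ the factors of $\sigma_0$ dividing $s_1+\epsilon s_2$ are exactly those of $\tilde\sigma_0$, so $\gcd(\sigma_0,s_1+\epsilon s_2)=\tilde\sigma_0$ up to a scalar. The hypothesis says this gcd differs from $\sigma_0$ by a factor from $\Pi_2$, i.e. has degree at least $k-2$; hence $\deg\tilde\sigma_0\ge k-2$. Putting $\beta_0:=\sigma_0/\tilde\sigma_0,\ \beta_1:=s_1/\tilde\sigma_0,\ \beta_2:=s_2/\tilde\sigma_0$, each quotient lies in $\Pi_2$, and $\beta_0$ is nonconstant (else $s_1$ would be a scalar multiple of $\sigma_0$), so $\tilde\sigma_0\in\Pi_{k-1}$, which is exactly \eqref{sb}. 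The uniqueness assertion is then immediate: if $\beta_0,\beta_1$ are relatively prime then $\gcd(\sigma_0,s_1)=\tilde\sigma_0\gcd(\beta_0,\beta_1)=\tilde\sigma_0$, and a gcd is determined up to a scalar by $\sigma_0$ and $s_1$.

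For the ``Furthermore'' part I would first record that $\beta_0,\beta_1,\beta_2$ inherit squarefreeness from $\sigma_0,s_1,s_2$ and that $\beta_0$ is not proportional to $\beta_1$ or to $\beta_2$ (otherwise $\sigma_0$ would be proportional to $s_1$ or $s_2$). Hence the common factor of $\beta_0$ with $\beta_1$ is a single, uniquely determined line $\ell_{01}$, and likewise $\beta_0$ shares a unique line $\ell_{02}$ with $\beta_2$; note $\deg\beta_0\in\{1,2\}$, a constant $\beta_0$ sharing no nonconstant factor. If $\ell_{01}$ and $\ell_{02}$ are the same line $\ell$ (in particular whenever $\deg\beta_0=1$), then $\ell$ divides all three $\beta_i$ and alternative (i) holds. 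Otherwise $\deg\beta_0=2$ and, the two lines being distinct, $\beta_0=\ell_{01}\ell_{02}$ up to a scalar; assuming (i) fails forces $\ell_{01}\nmid\beta_2$ and $\ell_{02}\nmid\beta_1$. Then for every $\epsilon\neq0$ one checks that $\ell_{01}\mid(\beta_1+\epsilon\beta_2)$ would force $\epsilon=0$, while $\ell_{02}\mid(\beta_1+\epsilon\beta_2)$ would force $\ell_{02}\mid\beta_1$; both fail, so neither linear factor of $\beta_0$ divides $\beta_1+\epsilon\beta_2$ and alternative (ii) holds.

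The main obstacle is the first paragraph: everything downstream is bookkeeping once one knows that passing to a generic member of the pencil pins the common factor with $\sigma_0$ down to exactly $\tilde\sigma_0$. The point demanding care throughout is the squarefreeness of $\sigma_0$, which is what lets me identify $\gcd(\sigma_0,\,s_1+\epsilon s_2)$ with an honest divisor (a product of distinct irreducibles) rather than merely bounding its degree, and which in the last step guarantees that the shared factors $\ell_{01},\ell_{02}$ are genuine simple lines, so that the divisibility alternative is exhaustive.
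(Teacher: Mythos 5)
Your proof is correct, but the route to \eqref{sb} is genuinely different from the paper's. The paper applies the hypothesis to $s_1+cs_2$ for $k+1$ small values of $c$, pigeonholes on the finitely many components of $\sigma_0$ of degree $\le 2$ to find $c_1\neq c_2$ with the same cofactor $\beta(c_1)=\beta(c_2)=:\beta_0$, subtracts to obtain the cross-multiplication identities $s_i\beta_0=\sigma_0\beta_i$, and then needs a further case analysis (splitting $\beta_0=\ell_0\ell_0'$ and cancelling lines) to conclude that $\beta_0$ actually divides $\sigma_0$. You instead define $\tilde\sigma_0=\gcd(\sigma_0,s_1,s_2)$ at the outset, observe that each irreducible factor of $\sigma_0$ not dividing $\tilde\sigma_0$ kills at most one member of the pencil, and apply the hypothesis to a single generic member to force $\deg\tilde\sigma_0\ge k-2$; divisibility of $\sigma_0$, $s_1$, $s_2$ by $\tilde\sigma_0$ is then built in, so the paper's case analysis and its appeal to Proposition \ref{eps} (squarefreeness of $s_1+cs_2$) are both avoided, and the ``Moreover'' uniqueness claim becomes transparent since $\tilde\sigma_0$ is a gcd by construction. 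What the paper's version buys is the explicit identities $s_i\beta_0=\sigma_0\beta_i$, which mirror the manipulations used elsewhere in the article; what yours buys is brevity and the fact that squarefreeness of $\sigma_0$ is isolated as the one place where it is really needed (to identify the gcd with the product of distinct common irreducibles). Your treatment of the ``Furthermore'' alternative is essentially the paper's argument, reorganized around the two gcd lines $\ell_{01},\ell_{02}$, and is likewise correct.
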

\begin{proof}
Consider the polynomials $\sigma_0, s_1$ and $s_2.$
In view of the hypotheses and Proposition \ref{eps} for sufficiently small $\epsilon$ we have that
\begin{equation}\label{sc}(s_1+cs_2)\beta(c)=\sigma_0 \overline\beta(c),\end{equation}
where $\beta(c),\overline\beta(c)\in\Pi_2$ are relatively prime.

Then we have that $\beta(c)$ is a linear or conic component of $\sigma_0.$ Suppose that $\sigma_0$ has $k$ such components. By considering $k+1$ sufficiently small values of $c$ we get that there are constants $c_1$ and $c_2$ such that $\beta(c_1)=\beta(c_2)=:\beta_0$.

Then we readily obtain from \eqref{sc} that
\begin{equation}\label{s12}s_1\beta_0=\sigma_0 \beta_1\ \hbox{and}\ s_2\beta_0=\sigma_0 \beta_2,\ \hbox{where}\ \beta_1,\beta_2\in\Pi_2.\end{equation}
In the case when $\beta_0$ is relatively prime with $\beta_1$ or $\beta_2$ then it clearly divides
$\sigma_0.$ 
By denoting $\tilde\sigma_0=\sigma_0 /\beta_0\in\Pi_{k-1},$
we get \eqref{sb} from \eqref{s12}.

\noindent It remains to consider the case when $\beta_0$ is a reducible conic and has a common linear component with $\beta_1$ as well as with $\beta_2.$ Below everywhere the letter $\ell$ denotes a linear polynomial. Thus suppose that $\beta_0=\ell_0\ell_0'.$ After a cancellation with a linear polynomial in \eqref{s12} two cases are possible:
 
\noindent Case 1. $s_1\ell_0=\sigma_0 \ell_1\ \hbox{and}\ s_2\ell_0'=\sigma_0 \ell_2;$ \quad
Case 2. $s_1\ell_0=\sigma_0 \ell_1\ \hbox{and}\ s_2\ell_0=\sigma_0 \ell_2.$

In Case 1 $\beta_0=\ell_0\ell_0'$ again divides $\sigma_0$ and we get \eqref{sb}.
In Case 2 $\beta_0=\ell_0$ divides $\sigma_0$ and we get \eqref{sb}, where $\beta_0$ therefore $\beta_1$ and $\beta_2$ are linear.
 Thus \eqref{sb} is proved.

Note that if $\beta_0$ and $\beta_1$ are relatively prime then $\tilde\sigma_0$  is uniquely determined from the first two relations in \eqref{sb} as the greatest common divisor of $\sigma_0$ and $s_1.$ 

Now, consider the ``Furthermore'' statement. Assume that the pairs $\beta_0, \beta_1,$ and $\beta_0, \beta_2,$ have a common factor.
Set $\beta_0=\ell\ell_0$ and  $\beta_1=\ell\ell_1.$ Then we have that either $\beta_2=\ell\ell_2,$ or $\beta_2=\ell_0\ell_3.$ The first case reduces to the item (i).
Let us consider the second case.
It is easily seen that the polynomials $\beta_0=\ell\ell_0$ and $\beta_1+\epsilon\beta_2=\ell\ell_1+\epsilon\ell_0\ell_3$  have no common factor. 

Indeed, conversely suppose that $\ell$ is a common factor. Then the last equality implies that $\ell=\ell_0,$ or $\ell=\ell_3.$ In the first case we get that $\beta_0$ and hence, in view of \eqref{sb}, $\sigma_0$ has a duble component $\ell,$ while in the second case we get that $\beta_0=\beta_2$ and hence $\sigma_0=\sigma_2.$ 

Now conversely suppose that $\ell_0$ is a common factor.
In this case the same equality implies that $\ell_0=\ell,$ or $\ell_0=\ell_1.$ The first case was considered already, while the second case implies that $\beta_0=\beta_1$ and hence $\sigma_0=\sigma_1.$ 
   \end{proof}

Now we are in a position to present

\emph{Proof of Proposition \ref{factor3}.} Assume by way of contradiction that
any polynomial from
$S:=\hbox{Linear span}\{\sigma_1,\ldots,\sigma_6\},$
differs from $\sigma_0$ with a factor of degree at most two.
By Lemma \ref{nom}, for the polynomial $\sigma_0$ and any two polynomials from  $S,$ the relation \eqref{sb} holds.

Case 1. Assume that there is a polynomial $s_1\in S,$ say it is $s_1=\sigma_1$, for which the relation \eqref{sb} holds with $\beta_1$ being relatively prime with $\beta_0.$ Note that this evidently takes place if $\beta_0$ is linear.

Then, according to Lemma \ref{nom},  $\tilde\sigma_0$ is determined uniquelly.

Now, let us apply Lemma \ref{nom} successively with the triples of polynomials
$\sigma_0,\sigma_1,\sigma_i,\ i=2,\ldots,6.$ Then we get that
 \begin{equation*} \label{sbs} \sigma_i=\tilde \sigma_0\beta_i, \ i=0,\ldots,6,\   \hbox{where}\ \beta_i\in \Pi_2.\end{equation*}

Clearly the seven polynomials $\beta_i$ here, and consequently the seven polynomials $\sigma_i$
are linearly dependent, which contradicts our assumption.

Case 2. Assume that for any triple of polynomials $\sigma_0, s_1:=\sigma_i, s_2:=\sigma_j$ the relation \eqref{sb} holds with $\beta_0$ having a common factor with $\beta_i$ as well as with $\beta_j.$ Hence all three are of degree two.

Now, if for some triple the alternative (ii) holds then we have Case 1 with $s_1:=\sigma_i+\epsilon\sigma_j.$ Note that, in view of Proposition \ref{eps}, $s_1$ has no multiple factors if $\epsilon$ is sufficiently small.

\noindent Next, suppose that the alternative (i) holds:
$\beta_0=\ell\ell_0, \beta_i=\ell\ell_i, \beta_j=\ell\ell_j.$

This reduces to Case 1 since here \eqref{sb} holds also with linear $\beta$'s:
\begin{equation*} \label{bsss} \sigma_0=\overline \sigma_0\ell_0,\  \sigma_i=\overline \sigma_0\ell_i,\ \sigma_j=\overline \sigma_0\ell_j,\ \hbox{where}\ \overline \sigma=\tilde\sigma\ell.\qquad\Box\end{equation*} 
\section{The existence of three curves of degree $k-1$}

\begin{proposition}\label{pr3k-1}
\textit {Assume that the hypotheses of Theorem \ref{mainth} hold. Then, there are three linearly independent curves of degree $k-1$ passing through all the nodes of the set ${\mathcal X}.$}
\end{proposition}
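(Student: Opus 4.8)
The plan is to reduce to the structured situation supplied by Propositions \ref{nomult} and \ref{factor3}, extract a common factor of the pencil they produce, and then lower the degree by one by dividing out that factor. Throughout I work under the standing assumption of the ``Moreover'' part, namely that seven linearly independent curves of degree $\le k$ pass through $\mathcal X$. First I would apply Proposition \ref{nomult}. If its second alternative holds, there are already three linearly independent polynomials from $\Pi_{k-1}$ vanishing on $\mathcal X$ and the proof is complete. Hence I may assume the first alternative: there are seven linearly independent polynomials $\sigma_0,\ldots,\sigma_6$ vanishing on $\mathcal X$, each of exact degree $k$ and with no multiple factors.

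Next I would invoke Proposition \ref{factor3} to find a polynomial $s$ in the linear span of $\sigma_1,\ldots,\sigma_6$ which has no multiple factors and differs from $\sigma_0$ by a factor of degree at least three. Writing $\tau=\gcd(\sigma_0,s)$, this gives $\sigma_0=\tau\beta_0$ and $s=\tau\beta_1$ with $\beta_0,\beta_1\in\Pi_m$ relatively prime, $m\ge 3$, and $\deg\tau=k-m$. The whole pencil $\tau(\lambda\beta_0+\mu\beta_1)$ then lies in $\mathcal P_{k,\mathcal X}$, so every node of $\mathcal X$ lying off $\tau$ must lie on both $\beta_0$ and $\beta_1$; denote this (finite) set by $\mathcal X'$ and put $\mathcal X_\tau=\mathcal X\setminus\mathcal X'$, the nodes carried by $\tau$.

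The crux is to produce three linearly independent curves of degree $m-1$ through $\mathcal X'$: since the polynomial ring is a domain, multiplication by the fixed $\tau$ maps such a space injectively into $\mathcal P_{k-1,\mathcal X}$, which is exactly the desired conclusion. To this end I would first bound the nodes on $\tau$: as $\mathcal X_\tau$ is $n$-independent and lies in the curve $\tau$ of degree $k-m$, part $i)$ of Proposition \ref{maxcurve} gives $\#\mathcal X_\tau\le d(n,k-m)$, whence $\#\mathcal X'\ge d(n,k-3)+3-d(n,k-m)$, and I expect to show that $\tau$ is in fact maximal for $\mathcal X_\tau$. Combined with $\mathcal X'\subseteq\beta_0\cap\beta_1$ and the $n$-independence of $\mathcal X'$, this should pin down $\mathcal X'$: in the critical case $m=3$ one obtains $\#\mathcal X'=3$, and any three points are $2$-independent, so $\dim\mathcal P_{2,\mathcal X'}=N_2-3=3$; for $m>3$ the space $\Pi_{m-1}$ is larger and leaves at least as much room.

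The main obstacle I anticipate is precisely this counting step: showing that $\tau$ is maximal for the nodes it carries and that $\mathcal X'$ therefore imposes at most $N_{m-1}-3$ conditions on $\Pi_{m-1}$. Here the coprimality of $\beta_0,\beta_1$ (a B\'ezout-type bound on $\#(\beta_0\cap\beta_1)$), the $n$-independence of $\mathcal X$, and the exact cardinality $d(n,k-3)+3$ must be used simultaneously; this is also where the hypothesis that the differing factor has degree \emph{at least three} is essential, the number three matching the three curves sought. Once $\dim\mathcal P_{m-1,\mathcal X'}\ge 3$ is established, choosing a basis $\gamma_1,\gamma_2,\gamma_3$ and forming $\tau\gamma_1,\tau\gamma_2,\tau\gamma_3$ yields three linearly independent curves of degree $k-1$ passing through all the nodes of $\mathcal X$, completing the proof.
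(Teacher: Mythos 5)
There is a genuine gap, and it sits exactly where you flag it: the counting step is not a technical obstacle you can expect to clear by the means you describe, it is essentially the content of the theorem itself. After writing $\sigma_0=\tau\beta_0$, $s=\tau\beta_1$ with $\beta_0,\beta_1$ coprime of degree $m\ge 3$, all you actually know about $\mathcal X'=\mathcal X\setminus\tau$ is that $\mathcal X'\subseteq\beta_0\cap\beta_1$, so by B\'ezout $\#\mathcal X'\le m^2$; the lower bound $\#\mathcal X'\ge d(n,k-3)+3-d(n,k-m)$ (which equals $3$ when $m=3$) gives no upper bound. In the case $m=3$ the set $\mathcal X'$ could a priori consist of up to $9$ points of a complete intersection of two coprime cubics, and for such a configuration a maximal $2$-independent subset typically has more than $3$ points, so $\dim\mathcal P_{2,\mathcal X'}\ge 3$ fails. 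Your proposed fix --- ``show that $\tau$ is maximal for $\mathcal X_\tau$'' --- is, for $m=3$, precisely the ``Moreover'' conclusion of Theorem \ref{mainth} (all nodes but three on a maximal curve of degree $k-3$), so you would be assuming the hard part. For $m>3$ the remark that $\Pi_{m-1}$ ``leaves at least as much room'' is also unsupported: $\#\mathcal X'$ can grow like $m^2$ while $N_{m-1}-3$ grows like $m^2/2$, and nothing forces the nodes of $\mathcal X'$ to impose only $N_{m-1}-3$ conditions on $\Pi_{m-1}$. A further symptom of the problem is that beyond Propositions \ref{nomult} and \ref{factor3} you use only the two curves $\sigma_0$ and $s$; the hypothesis of \emph{seven} independent curves enters nowhere else, whereas the count of seven is what the argument must consume.

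For comparison, the paper's proof goes a different way after the same initial reduction via Propositions \ref{nomult} and \ref{factor3}. It never divides out a common factor. Instead it adjoins auxiliary collinear nodes $B_1,\dots,B_5$ (then $B_1,\dots,B_4$) on a line $\ell_0$ with $\ell_0\cap\mathcal X=\emptyset$, uses Lemma \ref{2cor} to retain two (respectively three) curves through the enlarged set, and uses Proposition \ref{factor3} together with Theorem \ref{intn} only to build a carefully placed set $\mathcal A(-1)$ of points on lines through an external point $O$, so that the enlarged set is a maximal $n$-independent set in $\sigma_0$. Proposition \ref{maxcurve} then forces $\sigma^*\ell_0=\sigma_0\ell$, hence $\ell_0$ divides $\sigma_0$, and $q_0=\sigma_0/\ell_0\in\Pi_{k-1}$ vanishes on $\mathcal X$; running this for three curves yields the three independent curves of degree $k-1$. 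If you want to salvage your outline, you would need to replace the coprime-cofactor counting by some device of this kind that actually forces a degree drop.
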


\begin{proof}
Let  $\sigma_0,\ldots,\sigma_6,$ be the seven curves of degree $\le k$ that pass through all the nodes of the $n$-independent set ${\mathcal X}$ with $\#{\mathcal X} = d(n, k-3)+3$.

In view of Proposition \ref{nomult}, without loss of generality, assume that each of these polynomials is of exact degree $k$ and has no multiple factors.

{\bf Step 1.} Here we will prove that there is at least one curve of degree $\le k-1$ passing through all the nodes of the set ${\mathcal X}.$

We start by choosing two  nodes  $B_1,B_2\notin{\mathcal X}$ such that the following two conditions are satisfied:

\noindent $i)$ the set ${\mathcal X}\cup \{B_1,B_2\}$ is $n$-independent;

\noindent $ii)$ the line $\ell_0$ through $B_1$ and $B_2$ does not pass through any node from ${\mathcal X}.$

Let us verify that one can find such nodes. Indeed, in view of Lemma \ref{ext}, we can start by choosing some nodes $B_i=B_i',~i=1,2,$ satisfying the condition
$i)$.  Then, according to Lemma \ref{eps'}, for some positive $\epsilon$ all the nodes $B_i,\ i=1,2,$
in $\epsilon$ neighborhoods of $B_i', i=1,2,$ respectively, satisfy the condition $i)$.
Finally, from these  neighborhoods we can choose the nodes $B_i,\ i=1,2,$
satisfying the condition $ii)$ too.

Next we find one more node $B_3\in \ell_0$ such that
the set ${\mathcal X}\cup \{B_1,B_2,B_3\}$ is $n$-independent.
Indeed, if there is no such node then we obtain that
$$p\in\Pi_k,\ p|_{\Xset\cup\{B_1,B_2\}}=0 \Rightarrow p|_{\ell_0}=0.$$
Therefore $p=\ell_0 q,$ where $q\in\Pi_{k-1}$ and, in view of the condition ii), $q|_{\Xset}=0.$
Hence, if there is no $B_3$ then, according to Lemma \ref{2cor}, there are five linearly independent polynomials $p\in\Pi_k$ satisfying the condition $p|_{\Xset\cup\{B_1,B_2\}}=0.$ Therefore,
there are five linearly independent $q\in\Pi_{k-1}$ satisfying the condition $q|_{\Xset}=0.$

Next, we find successively two more nodes $B_4,B_5\in \ell_0$ such that
the set ${\mathcal X}\cup \Bset_5$ is $n$-independent,
where $\Bset_5:= \{B_1,B_2,B_3,B_4,B_5\}.$
Indeed, if one cannot find the node $B_4$ or $B_5$ then, in the same way as above, we obtain that there are four or three linearly independent polynomials $q\in\Pi_{k-1}$ satisfying the condition $q|_{\Xset}=0,$ respectively.

Then, in view of Lemma \ref{2cor}, there are two curves of degree $\le k,$  which pass through all the nodes of ${\mathcal X}\cup \Bset_5.$
Denote one of them by $\sigma_0.$ We may assume that it is of exact degree $k$
 and has no multiple factors.
We may assume also that $\ell_0$ is not a component of $\sigma_0.$ Otherwise as above, we find a desired polynomial $q.$

Now, in view of Proposition \ref{extcurve}, we enlarge the set ${\mathcal X}\cup\Bset_5$ to a maximal $n$-independent set ${\mathcal Z}\subset \sigma_0,$ by adding $d(n,k)-(d(n,k-3)+3)-5=3(n-k)+1$ nodes, i.e.,
$${\mathcal Z} = {\mathcal X}\cup\Bset_5\cup{\mathcal A},\ \hbox{where}\ \#{\mathcal A}=3(n-k)+1=[3(n-k-1)-1]+5.$$

Let us start with the description of the choice of $3(n-k-1)-1$ nodes of $\mathcal A.$
By using Proposition \ref{factor3} we find a curve $\sigma$ in the linear span of  $\sigma_i,\ i=1,\ldots,6,$ which has no multiple factors and differs from $\sigma_0$ with a factor of degree at least three:
$\sigma=\gamma r,\ \sigma_0=\gamma_0 r,$  with $d:=\deg\gamma=\deg\gamma_0\ge 3$ and $r\in \Pi_{k-d}.$ We have that $\gamma_0$ and $\sigma$ are relatively prime. 

Below we are using Theorem \ref{intn} with respect to the curve $\Cset:=\gamma_0.$ Choose a point $O\notin \gamma_0\cup\sigma.$ Since $O\notin \sigma_0$ no line through the point $O$ will be a component of $\sigma_0.$ Consider a line $\ell_1$ through $O$ which intersects $\Cset$ at distinct points not belonging to $\ell_0\cup\sigma.$  Let  $A_1,A_2$ and $A_3,$ be three of those intersection points.  By using a continuity argument we may assume that the lines $\ell_i,\ i=2,\ldots,n-k-1,$ pass through $O$ and are enough close to $\ell_1$ so that each of them intersects $\Cset$ at distinct points, which do not belong to $\ell_0\cup\sigma.$ We assume also that $\ell_i\cap(\Xset\cup\Bset_5)=\emptyset,\ i=1,\ldots,n-k-1.$    As in the case of the line $\ell_1$ let $A_{3i-2},A_{3i-1}$ and $A_{3i},$ be three of those intersection points belonging to $\gamma_0\cap\ell_i,\ i=2,\ldots,n-k+1.$ Finally, let us dismiss an intersection point, say $A_1,$ and denote the desired set of the remaining  $3(n-k-1)-1$ intersection nodes 
$\{A_i\}$ by $\Aset(-1).$

Let us prove that the set $\Yset:=\Xset\cup\Bset_5\cup\Aset(-1)$ is $n$-independent.

We have that the set $\Aset(-1)$ is a subset of Berzolari-Radon construction of degree $n-k-1.$
Hence it is $(n-k-1)$-independent. Now suppose that $p^\star_{A,\Aset(-1)}$ is a fundamental polynomial of a node $A\in\Aset(-1)$ of degree $n-k-1.$ Then the polynomial
$\sigma\ell_0p^\star_{A,\Aset(-1)}$ is an $n$-fundamental polynomial of the node $A$ for the set $\Yset.$ Here we use the fact that no node from $\Aset(-1)$ belongs to $\ell_0$ or $\sigma.$ Thus, according to Lemma \ref{XA}, the set $\Yset$ is $n$-independent.

Finally, in view of Proposition \ref{extcurve}, we enlarge the set $\Yset\subset\sigma_0$ with a set $\Aset_5$ of the last $5$ nodes to a maximal $n$-independent set $\Zset\subset\sigma_0.$
Thus we have that  $\Zset:=\Yset\cup\Aset_5$ and $\Aset=\Aset(-1)\cup\Aset_5.$

Now suppose that $\sigma^*\in\Pi_k$ vanishes on ${\mathcal X}$ and $\Aset_5.$
 According to Lemma \ref{2cor} there are $2=7-5$ such polynomials. Hence we may assume that $\sigma^*\neq\sigma_0.$ Then consider the polynomial $p=\sigma^*\ell_0\ell_1\cdots\ell_{n-k-1}.$ We have that $p\in\Pi_n$ vanishes on the maximal $n$-independent set $\Zset\subset\sigma_0.$ Therefore,  we have that
$p=\sigma^*\ell_0\ell_1\cdots\ell_{n-k-1}=\sigma_0s,\ \hbox{where}\ s\in\Pi_{n-k}.$

The lines $\ell_i,\ i=1,\ldots,n-k-1,$ are not components of $\sigma_0$ since they  pass through $O\notin\sigma_0.$ Therefore, they are  components of the polynomial $s.$ Thus we obtain
$$\sigma^*\ell_{0}=\sigma_0\ell,\ \hbox{where}\ \ell\in\Pi_{1}.$$
Since $\sigma^*\neq\sigma_0$ therefore $\ell_0\neq \ell.$ Whence $\ell_{0}$ is a component of $\sigma_0:$
$\sigma_0=\ell_{0}q_0,\ \hbox{where}\ q_0\in\Pi_{k-1}.$
As above we get that $q_0$ vanishes on $\Xset.$

{\bf Step 2.} Here we will prove that there are three linearly independent curves of degree $\le k-1$ passing through all the nodes of the set ${\mathcal X}.$

We find a line $\ell_0$ and  collinear nodes $B_1,\ldots,B_4\in\ell_0,$ in the same way as in the Step 1,  such that $\ell_0\cap\Xset=\emptyset$ and
the set ${\mathcal X}\cup \Bset_4$ is $n$-independent,
where $\Bset_4:= \{B_1,B_2,B_3,B_4\}.$

Next, in view of Proposition \ref{2cor}, there are three linearly independent curves of degree at most $k,$  which pass through all the nodes of the set ${\mathcal X}\cup \Bset_4.$
Denote these curves by $\sigma_0, \sigma_0', \sigma_0''.$
If a curve here, say $\sigma_0,$ is of degree $\le k-1$ and has no multiple components then
instead of given triple of curves we consider the curves
$\ell_1\sigma_0, \ell_2\sigma_0,\ell_3 \sigma_0,$ where the lines $\ell_i$ are chosen such that these three curves are linearly independent and have no multiple factors.

Next, if a curve $\sigma_0, \sigma_0',\sigma_0'',$ has a multiple factor then by throwing away the excessed factor we are in the situation considered in the previous paragraph.
Hence, we may consider only the case when each of theses three polynomials is of exact degree $k$ and has no multiple components.

Now consider the curve $\sigma_0.$
In view of Proposition \ref{extcurve}, we enlarge the set ${\mathcal X}\cup \Bset_4$ to a maximal $n$-independent set ${\mathcal Z}\subset \sigma_0,$ by adding $d(n,k)-(d(n,k-3)+3)-4=3(n-k)+2$ nodes, i.e.,
$${\mathcal Z} = {\mathcal X}\cup\Bset_4\cup{\mathcal A},\ \hbox{where}\ \#{\mathcal A}=3(n-k)+2=[3(n-k-1)-1]+1+5.$$

We find the set of $3(n-k-1)-1$ points from $\mathcal A$ in the same way as in Step 1  and denote it again by $\Aset(-1).$
Then, in the same way as in Step 1, we prove the independence of the set $\Yset:={\mathcal X}\cup\Bset_4\cup{\mathcal A}(-1).$

Next, in view of Theorem \ref{intn},
we choose a node  $\tilde A_1\in\ell_1$ such that $\tilde A_1\in\sigma_0\setminus q_0,$
where  $q_0$ is the polynomial of degree $\le k-1$ vanishing on $\Xset,$ found in Step 1.
Note that the line $\ell_1$ is not a component of $q_0$ since $\ell_1\cap\Xset=\emptyset.$

Then consider the case when $\tilde A_1\in\Aset(-1),$ i.e., $\tilde A_1$ coincides with one of the nodes $A_2,A_3\in\Aset(-1)\cap\ell_1,$ say $\tilde A_1=A_2.$ In this case instead of $\Aset(-1)$ we would start with the set $\Aset(-1)'=\Aset(-1)\cup\{A_1\}\setminus \{A_2\}$ and we will have already that $\tilde A_1\notin\Aset(-1)'.$

Since $\ell_0$ is not a component of $\sigma_0$ therefore the set $F:=\ell_0\cap \sigma_0$ is a finite set and we could suppose beforehand that $\ell_1\cap F=\emptyset.$ This will ensure that $\tilde A_1\notin\ell_0.$  Also we have that $\tilde A_1\neq O$ since $O\notin \sigma_0.$

Now let us prove the independence of the set $\tilde{\mathcal Y}:={\mathcal Y}\cup\{ \tilde A_1\}.$
For this end, in view of Lemma \ref{XA}, it suffices to find a fundamental polynomial of the node $\tilde A_1$ with respect to the set $\tilde{\mathcal Y}.$
We readily verify that $p^\star_{\tilde A_1, \tilde\Yset}=q_0\ell_0\ell_2\cdots\ell_{n-k-1}\ell'\ell'',$ where $\ell'$ and $\ell''$ are lines different from $\ell_1$ and pass through the nodes $A_2$ and $A_3,$ respectively. 

Finally, according to Proposition \ref{extcurve}, let us enlarge the set $\tilde\Yset\subset\sigma_0$  with the set of last $5$ nodes, denoted by $\Aset_5,$ to a maximal $n$-independent set.
Thus the set $\Zset:=\tilde\Yset\cup\Aset_5$ is a maximal $n$-independent set in $\sigma_0.$

Now suppose that $\sigma^*\in\Pi_k$ vanishes on ${\mathcal X}$ and the $5$ nodes of $\Aset_5.$
 According to Lemma \ref{2cor} there are at least two such polynomials. Hence we may assume that $\sigma^*\neq\sigma_0.$ Then consider the polynomial   $p=\sigma^*\ell_0\ell_1\cdots\ell_{n-k-1}.$ We have that $p\in\Pi_n$ and $p$ vanishes on the
 node set $\Zset,$ which is a maximal $n$-independent set in the curve $\sigma_0.$ Therefore,  we have that
 
 $\qquad\qquad p=\sigma^*\ell_0\ell_1\cdots\ell_{n-k-1}=\sigma_0s,\ \hbox{where}\ s\in\Pi_{n-k}.$

The lines $\ell_i,\ i=1,\ldots,n-k-1,$ are not components of $\sigma_0.$ Therefore, they are  components of the polynomial $s.$ Thus we get that
$\sigma^*\ell_{0}=\sigma_0\ell,\ \hbox{where}\ \ell\in\Pi_{1}.$
Since $\sigma^*\neq \sigma_0$ therefore $\ell_0\neq \ell.$ Hence $\ell_{0}$ is a component of $\sigma_0:$
$$\sigma_0=\ell_{0}q_{k-1},\ \hbox{where}\ q_{k-1}\in\Pi_{k-1}.$$
In the same way for the curves $\sigma_0'$ and $\sigma_0''$ we get
$\sigma_0'=\ell_{0}q_{k-1}',\ \hbox{where}\ q_{k-1}'\in\Pi_{k-1},\ $ and
$\ \sigma_0''=\ell_{0}q_{k-1}'',\ \hbox{where}\ q_{k-1}''\in\Pi_{k-1}.$

Obviously the  curves $q_{k-1}, q_{k-1}', q_{k-1}'',$ are linearly independent.
\end{proof}

\section{\label{proofs} Proofs of Theorems \ref{mainth} and \ref{hkv}}

Let us start with
\begin{proof}[Proof of Theorem \ref{hkv}.]
Assume by way of contradiction that there are four curves passing through all the nodes of the set $\Xset.$ Then, according to Theorem \ref{hk}, all the nodes of $\Xset$ but three belong to a maximal curve $\mu$ of degree $k-2.$
The curve $\mu$ is maximal and the remaining three nodes of ${\mathcal X},$ denoted by $A, B$ and $C,$ are outside of it: $A,B,C\notin \mu.$ Hence we have that
$${\mathcal P}_{k,{\mathcal X}}=\left\{{p : p\in \Pi_{k}}, p_{\mathcal X}=0\right\}= \left\{{q\mu : q\in \Pi_{2}}, q(A)=q(B)=q(C)=0\right\}. $$
Thus we get readily that
$\dim{\mathcal P}_{k,{\mathcal X}}=\dim\left\{{q\in \Pi_{2}}: q(A)=q(B)=q(C)=0\right\}$ $=\dim{\mathcal P}_{2, \{A,B,C\}}=6-3=3,$
which contradicts our assumption.
Note that in the last equality we use Proposition \ref{PnX} and the fact that any three nodes are $2$-independent.

Now, let us verify the part ``if''. By assuming that there is a curve $\sigma$ of degree $k-1$ passing through the nodes of $\Xset$ we find readily three linearly independent curves of degree $\le k:\ $ $\sigma, x\sigma,
y\sigma,$ passing through $\Xset.$  While if we assume that all the nodes of  ${\mathcal X}$ but three lie in a curve $\mu$ of degree $k-2$ then above evaluation shows that $\dim{\mathcal P}_{k,{\mathcal X}}=3.$

Finally, let us verify the part ``only if''.
Denote the three  curves passing through all the nodes of ${\mathcal X}$ by $\sigma_0, \sigma_0', \sigma_0''.$
If one of them is of degree $k-1$ then the conclusion of Theorem is satisfied and we are done. Thus, we may assume that each curve is of degree $k$ and has no multiple components.
Now consider the curve $\sigma_0.$

By using Proposition \ref{extcurve} let us enlarge the set ${\mathcal X}$ to a maximal
$n$-independent set ${\mathcal Z}\subset\sigma_0.$ Since
$\#{\mathcal Z} = d(n,k)$, we need to add  a set of $d(n, k)-(d(n, k-2)+3) = 2(n-k)+2$
nodes, denoted by $$\mathcal A:=\{A_1,\ldots,A_{2(n-k)+2}\}.$$
Thus we have that ${\mathcal Z}:={\mathcal X}\cup \mathcal A.$
In view of  Lemma \ref{lem44}, $i)$, we require that each node of $\mathcal A$ may belong only to one component of the curve $\sigma_0.$

{\bf Case 1,} $n=k+2,\ \mathcal A:=\{A_1,\ldots,A_6\}.$

Consider $5$ nodes from $\Aset$ and a conic $\beta^*$ passing through them. Denote the sixth node by $A^*.$
We have three polynomials from $\Pi_{k}$ vanishing on $\Xset.$
By using Lemma \ref{2cor} we get two linearly independent curves  of degree at most $k,$  that pass through all
the nodes of ${\mathcal X}$ and the node $A^*\in\Aset.$
Thus we may consider a such  curve $\sigma^*\in\Pi_k$ by assuming that $\sigma^*\neq\sigma_0.$
Now, notice that the  polynomial
$
\sigma^* \, \beta^*
$
of degree $n$ vanishes at all the nodes of ${\mathcal Z}\subset\sigma_0.$ Consequently, according to Proposition \ref{maxcurve}, $\sigma_0$ divides this polynomial:
\begin{equation}\label{sigmalq}
\sigma^*\, \beta^*= \sigma_0 \, \beta, \quad \beta \in\Pi_{2}.
\end{equation}

We have that $\beta^*\neq\beta$ since $\sigma^*\neq \sigma_0.$
Hence if  $\beta^*$ is irreducible then it divides $\sigma_0.$
Now suppose that $\beta^*$  is reducible: $\beta^*=\ell_1\ell_2,$
where $\ell_i\in\Pi_1.$
Then we have that both lines $\ell_1,\ell_2,$ cannot divide $\beta$, hence either $\ell_1\ell_2$ or only one of them is a component of $\sigma_0.$

Let us consider the latter case. Suppose that  the line $\ell_1$ is a component of $\sigma_0$ and $\ell_2$ is a component of $\beta.$ Then we get from \eqref{sigmalq}
that
\begin{equation}\label{sigmalq1}\sigma^*\, \ell_1= \sigma_0 \, \ell, \ \hbox{where}\ \ell\in\Pi_1.\end{equation}
Now, we have that $\sigma_0=\ell_1 q,$ where $\deg q={k-1}.$
Then we get from \eqref{sigmalq1}  that $\sigma^*=\ell q.$ From the last two equalities we conclude that  $\Xset\subset q \cup \{E\},$
where $E=\ell_1\cap\ell.$

Therefore all the nodes of $\Xset,$ except possibly $E,$ belong to the curve $q.$ Here $q$ is a component of $\sigma_0$ of degree $k-1$ and $E$ belongs to its line component $\ell_1.$

We briefly express the above conditions by saying that the line component  $\ell_1$ of $\sigma_0$ satisfies $(-1)$-node condition for $\Xset.$

At the end we will see that if this property holds for all three given curves $\sigma_0, \sigma_0', \sigma_0'',$ then we can readily complete the proof of Theorem.

Therefore, from now on we may assume that the equality \eqref{sigmalq} implies that $\deg\beta^*=2$ and  $\beta^*$  is a component of $\sigma_0.$  Thus we obtain also that $\beta^*$ is determined uniquely by the  $5$ nodes from $\Aset.$

Next, we are going to prove that there is a conic passing through all the six nodes of $\Aset.$ Assume conversely that there is no such conic. Denote by $\beta_i$ the conic passing through the five nodes of   $\Aset_6\setminus\{A_i\},\ i=1,2.$

We have that these two conics  are different
components of $\sigma_0.$ First assume that one of these two conics, say, $\beta_1$, is irreducible. Then consider a common node of $\beta_1$ and $\beta_2$, say, $A_3.$ It is easily seen that $A_3$ belongs to two different components of $\sigma_0,$ which contradicts our assumption. Indeed, one is $\beta_1$ and another is $\beta_2$ if it is irreducible or a line component of $\beta_2$ if it is reducible.

 Now, assume that both $\beta_1$ and $\beta_2$ are reducible:
$\beta_1=\ell_1\ell_1', \quad  \beta_2=\ell_2\ell_2'.$
Wihout loss of generality assume that
\begin{equation}\label{llll}\ell_1\neq \ell_2, \quad \ell_1\neq \ell_2'.\end{equation}

We have that $\ell_1$ passes through at least one of the common nodes $A_3,\ldots,A_6,$ say $A_3.$ Then $A_3$ belongs either to $\ell_2$ or to $\ell_2'.$ In both cases, in view of \eqref{llll}, we have that $A_3$ belongs to two different line components of $\sigma_0,$ which is a contradiction.
Thus we proved that $\Aset\subset\beta_0,$ where $\beta_0\in\Pi_2.$

Next let us show that $\beta_0$ divides $\sigma_0.$
Consider a polynomial $\sigma\in\Pi_{k}$ that vanishes on $\Xset$ and $\sigma\neq\sigma_0.$
Notice that the following polynomial
$
\sigma \, \beta_0
$
of degree $k+2\le n$ vanishes at all the $d(n,k)$ nodes of ${\mathcal Z}\subset\sigma_0.$ Consequently, according to Proposition \ref{maxcurve}, $\sigma_0$ divides this polynomial:
\begin{equation}\label{sigmalqlq}
\sigma\, \beta_0
= \sigma_0 \beta, \quad \beta \in\Pi_{2}.
\end{equation}
This is a type \eqref{sigmalq} equality which, as we mentioned above, implies that $\deg\beta_0=2$ and $\beta_0$ is a component of $\sigma_0,$ i.e.,
$\sigma
= \beta_0 q, \quad q \in\Pi_{k-2}.$
We conclude also that $\beta_0$ is uniquely determined by any $5$ nodes from $\Aset.$

Thus to enlarge the set $\Xset\subset\sigma_0$ to a maximal $n$-independent set $\Zset=\Xset\cup\Aset$ we  have to add all the six nodes of $\Aset$  to the conic $\beta_0.$ Let us verify that the added nodes cannot belong to the component $q.$ Indeed, suppose conversely that a node belongs to $\beta_0\cap q.$ Then, in view of Lemma \ref{lem44}, we can move the node to $q\setminus\beta_0$ such that the resulted set is also $n$-independent. This is a contradiction, since now the six nodes do not belong to a conic. Indeed, the five nodes determine a unique conic and the sixth node is outside of it.
Thus the factor $r\in\Pi_{k-2}$ to which one can not add a new independent node is merely maximal with respect to $\Xset.$ This means that $r$ passes through exactly $d(n,k)$ nodes of $\Xset.$

{\bf Case 2,} $n\ge k+3.$

Consider a subset of $\mathcal A$ of cardinality $4$ and denote it by $\Aset_4.$
Denote also by $\bar \Aset:=\Aset\setminus\Aset_4.$ We have that $\#\bar\Aset=2(n-k)-2.$

There are three linearly independent polynomials $\sigma_0, \sigma_0', \sigma_0''\in\Pi_k,$ vanishing on $\Xset.$
Now suppose that $\sigma^*\in\Pi_k$ vanishes on ${\mathcal X}$ and at an arbitrary node $A^*\in\bar\Aset,$ which will be specified below.
 According to Lemma \ref{2cor} there are two such polynomials. Hence we may assume that $\sigma^*\neq\sigma_0.$ 
 We call the node $A^*$ associated with $\sigma^*.$

We associate another node $A'\in\bar\Aset$ with the set $\Aset_4$ and denote by $\beta'$ a conic that passes through $A'$ and the four nodes of $\Aset_4.$ 

For any line component $\ell$ of $\sigma_0$ denote by $r_\ell\in\Pi_{k-1}$ for which
\begin{equation}\label{rell}\sigma_0=\ell r_\ell.\end{equation} 
Assume that a line component $\ell$ of the curve $\sigma_0,$  passes through exactly $m$ nodes from $\mathcal X,$ at which $r_\ell$ does not vanish.
Then we obtain from \eqref{rell} that $r_\ell\in\Pi_{k-1}$ vanishes at the all nodes of the set $\Xset$ except $m$ nodes, which belong to $\ell.$

Note that if for a line $\ell$ we have that $m\le 1,$  then  the line component $\ell$ of $\sigma_0$ satisfies the $(-1)$-node condition for $\Xset.$

Therefore we may suppose that $m\ge 2$ for all lines $\ell,$ meaning that the following condition takes place:

({\bf C}) Any line component of the curve $\sigma_0,$ passes through at least two nodes from $\mathcal X,$ at which $r_\ell$ does not vanish.

Later, in Section \ref{div}, by using the condition (C), we divide the set of nodes $\bar\Aset$ into $n-k-2$ pairs such that the lines
$\ell_1,\ldots,\ell_{n-k-2},$ through them, respectively, are not components of $\sigma_0.$ The remaining two nodes denoted by  $A^*$ and $A',$ are associated with the curve $\sigma^*$ and $\Aset_4,$ respectively.

Now, let us continue the proof by assuming that the above-described  division of $\bar{\mathcal A}$ is established.

Notice that the following polynomial
$
\sigma^* \, \beta' \, \ell_1 \dots \,
\ell_{n-k-2}
$
of degree $n$ vanishes at all the $d(n,k)$ nodes of ${\mathcal Z}\subset\sigma_0.$ Consequently, according to Proposition \ref{maxcurve}, $\sigma_0$ divides this polynomial:
\begin{equation}\label{sigmalqq}
\sigma^*\, \beta'\, \ell_1 \, \dots \, \ell_{n-k-2}
= \sigma_0 \, r, \quad r \in\Pi_{n-k}.
\end{equation}
The distinct lines  $\ell_1, \dots , \ell_{n-k-2}$ do not divide the polynomial $\sigma_0 \in\Pi_{k}$, therefore, all they have
to divide $r .$  Hence, we get from \eqref{sigmalqq} that
$\sigma^* \, \beta' = \sigma_0 \, \beta, \ \hbox{where}\ \beta\in\Pi_2.$
Then, we have that  $\beta'\neq\beta$ since $\sigma^*\neq\sigma_0.$
Now, in the same way as in Case 1 we obtain that
$
\sigma_0= \beta' q \ \hbox{where}\ q\in\Pi_{k-2}.
$

Next, we are going to prove that there is a conic passing through all the nodes of $\Aset.$
Assume by way of contradiction that there is no such conic. Then, in view of Proposition \ref{hm}, we have that there is a set of six nodes, say $\Aset_6:=\{A_1,\ldots,A_6\}\subset\Aset,$ that does not lie in a conic.

Now, let us choose three noncollinear nodes in $\Aset_6,$ say  $A_1,A_2,A_3,$ and consider the following sets of four nodes:
\begin{equation*}\label{tt}A_1,A_2,A_3,A_4;\quad A_1,A_2,A_3,A_5;\quad A_1,A_2,A_3,A_6.\end{equation*}

Then, consider these three sets with the respective associated nodes:
\begin{equation}\label{tta}A_1,A_2,A_3,A_4,A';\quad A_1,A_2,A_3,A_5,A'';\quad A_1,A_2,A_3,A_6,A'''.\end{equation}
We have that the three conics through these sets are components of $\sigma_0.$
Since $\Aset_6$ does not lie in a conic we obtain that these three conics cannot coincide. Hence there are two different conics, say the conics $\beta'$ and $\beta'',$ passing through the first two sets in \eqref{tta}, respectively.

 First assume that one of these two conics, say, $\beta'$, is irreducible. Then consider a common node, say, $A_1.$ It is easily seen that $A_1$ belongs to two different components of $\sigma_0,$ which contradicts our assumption. Indeed, one is $\beta'$ and another is $\beta'',$ if it is irreducible too, or a line component of $\beta'',$ if it is reducible.

 Next, assume that both $\beta'$ and $\beta''$ are reducible: $\beta'=\ell_1\ell_1', \quad  \beta''=\ell_2\ell_2'.$
Without loss of generality assume that
\begin{equation}\label{llll'}\ell_1\neq \ell_2, \quad \ell_1\neq \ell_2'.\end{equation}
Note that $\ell_1$ passes through at least one of the common nodes $A_1,A_2,A_3,$ say $A_1.$ Indeed, if $\ell_1$ passes through only $A'$ and $A_4$ then we obtain that $\ell_1'$ passes through the three noncolinear  nodes $A_1,A_2,A_3.$  Now, we have that $A_1$ belongs either to $\ell_2$ or $\ell_2'.$ In both cases, in view of \eqref{llll'}, we have that $A_1$ belongs to two different line components of $\sigma_0,$ which is a contradiction.

Thus we proved that $\Aset\subset\beta_0,$ where $\beta_0\in\Pi_2.$
Next, in the same way as in Case 1, we show that $\beta_0$ divides $\sigma_0:$
$\sigma_0
= \beta_0 q, \quad q \in\Pi_{k-2}.$
Also we have that $\beta_0$ is uniquely determined by the nodes of $\Aset\setminus\{A\},\ \forall A\in\Aset.$

Indeed, assume conversely that $\beta_0$ is not uniquely determined by the nodes from $\Aset\setminus\{A_0\},$ where $A_0\in\Aset.$ Therefore there are infinitely many conics $\beta_0$ passing through the nodes of $\Aset\setminus\{A_0\}.$
Recall that for (any) $A_0$ one can find a curve, denoted by $\sigma^*,$ of degree at most $k,$  that passes through all
the nodes of ${\mathcal X}$ and is different from $\sigma_0.$
Then, as in Case 1, we readily get $\sigma^*\beta_0=\sigma_0 \beta,$ where $\beta\in\Pi_2.$ This implies that  $\beta_0$ is a component of $\sigma_0.$ Therefore $\sigma_0$ has infinitely many components, which is a contradiction.

Thus to enlarge the set $\Xset\subset\sigma_0$ to a maximal $n$-independent set $\Zset=\Xset\cup\Aset$ we  have to add all the nodes of $\Aset$  to the conic $\beta_0.$ Let us verify that the added nodes do not belong to the component $q.$ Suppose conversely that a node $A_0\in\Aset$ belongs to $\beta_0\cap q.$ Then, in view of Lemma \ref{lem44}, let us move  $A_0$ to $q\setminus\beta_0$ such that the resulted set $\Aset$ remains $n$-independent. This is a contradiction, since now the nodes of $\Aset$ do not belong to a conic. Indeed, the nodes $\Aset\setminus\{A_0\}$ determine a unique conic and the moved node is outside of it.
Therfore, the factor $r\in\Pi_{k-2}$ to which one cannot add a new independent node is merely maximal with respect to $\Xset.$ Hence, $r$ passes through exactly $d(n,k)$ nodes of $\Xset.$

At the end, before establishing the division of the set $\bar\Aset,$ it remains to consider the case when the division may be not possible for all three curves $\sigma_0, \sigma_0', \sigma_0'',$ i.e., the case when the condition (C) does not hold.
Then, we obtain three curves $q, q',q'',$ which are components of degree $k-1$ of the curves $\sigma_0, \sigma_0', \sigma_0'',$ respectively,  passing through all the nodes of $\Xset$ except possibly one. 

Assume that $q, q',q'',$ pass through all the nodes of $\Xset$ except $E, E',E'',$ respectively.
First assume that two of these three nodes are different, say $E\neq E'.$
We have that $q$ and $q'$ pass through all the nodes of the set $\Yset:=\Xset\setminus \{E,E'\},\ \#\Yset=d(n,k-3)+1.$
If $q=q'$ then  we have that $E=E',$ contradicting our assumption. If $q\neq q'$ then, according to Theorem \ref{ht2}, all the nodes of $\Yset$ except one belong to a (maximal) curve $\mu$ of degree $k-2.$
Thus all the nodes of $\Xset$ except three belong to $\mu.$

It remains to consider the case $E = E'= E''.$
Then we have that $q,q',q'',$ pass through all the nodes of the set $\Yset:=\Xset\setminus \{E\},\ \#\Yset=d(n,k-2)+2.$ We get from Theorem \ref{ht} that $q=q'=q''=:q.$

Next, in view of the condition (C), we get that $\sigma=\ell  q,\  \sigma'=\ell'  q,\  \sigma''=\ell'',$ where
$\ell,\ell'\ell''\in\Pi_1.$ This contradicts the linear independence of $\sigma,\sigma'\sigma'',$ since we have that $E\in \ell\cap\ell'\cap\ell''.$
\end{proof}
{\bf Proof of Theorem \ref{mainth}.}
It is easily seen that Theorem \ref{mainth} follows from Proposition \ref{pr3k-1}, Theorem \ref{hkv} and Lemma \ref{mainl}. $\hfill\Box$

\subsection{\label{div}The division of the set $\bar\Aset$}
Next let us establish the above mentioned division of the node set  $\bar \Aset:=\Aset\setminus\Aset_4$ in the case $n\ge k+3.$ Note that this is the case when we need the division.

Recall that each node of $\mathcal A$ belongs only to one component of the curve $\sigma_0.$ 
By using induction on $m$ one can prove easily the following
\begin{lem}[Proof of Th. 3, \cite{HK}]\label{lemA0} Suppose that a finite set of lines ${\mathcal L}$ and  $2m$ nodes lying in these lines are given. Suppose also that no node is an intersetion point of two lines. Then one can divide the node set into $m$ pairs such that no pair belongs to the same line from ${\mathcal L}$ if and only if each line from ${\mathcal L}$ contains no more than $m$ nodes.
\end{lem}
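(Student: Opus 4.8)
The statement to prove is Lemma~\ref{lemA0}: given a finite set of lines $\mathcal{L}$ and $2m$ nodes lying on these lines, with no node being an intersection of two lines, the nodes can be split into $m$ pairs with no pair lying on a common line of $\mathcal{L}$ if and only if every line of $\mathcal{L}$ contains at most $m$ nodes.

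\emph{Proof approach.} The plan is to prove both directions, with the forward (``if'') direction being the substantive one. The necessity (``only if'') is immediate: if some line $\ell\in\mathcal{L}$ contains $m+1$ or more of the $2m$ nodes, then in any pairing at least two of these nodes must be paired together (by pigeonhole, since there are only $m$ pairs available), forcing a pair on $\ell$; hence no valid pairing exists. I would dispose of this in one sentence.

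For the sufficiency, I would argue by induction on $m$, exactly as the attribution ``By using induction on $m$'' suggests. The base case $m=1$ is trivial: two nodes, each line carrying at most one node, so the two nodes lie on different lines (using that no node is an intersection point, each node belongs to a unique line of $\mathcal{L}$) and form the single required pair. For the inductive step, assume the result for $m-1$. The natural move is to pick a line $\ell_{\max}\in\mathcal{L}$ carrying the maximum number of nodes, say $t\le m$. I would pair off one node from $\ell_{\max}$ with one node chosen from a \emph{different} line, remove these two nodes, and verify that the remaining $2(m-1)$ nodes still satisfy the hypothesis that every line contains at most $m-1$ nodes, so that the inductive hypothesis applies.

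\emph{The main obstacle} is making the greedy removal preserve the bound ``at most $m-1$ nodes per line'' for the reduced set of $2m-2$ nodes. The delicate point: after removal we need every line to drop to at most $m-1$ nodes. The line $\ell_{\max}$ drops from $t$ to $t-1\le m-1$, which is fine. The danger is a \emph{second} line $\ell'$ that also carries exactly $m$ nodes: if the node I remove from the non-maximal side does not come from $\ell'$, then $\ell'$ still has $m>m-1$ nodes and the hypothesis fails. The way to handle this is to choose the partner node carefully: if two lines each carry $m$ nodes (which forces $t=m$ and accounts for $2m$ nodes total, so these are the only two lines with nodes and they are disjoint in nodes since no node is a double point), I pair one node from each, and the remainder trivially has at most $m-1$ per line. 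If only $\ell_{\max}$ attains the maximum $t$, then after removing one node from $\ell_{\max}$ and one node from any other line, every other line had at most $t-1\le m-1$ nodes and stays within the bound. I would organize the step into these cases, checking in each that both removed nodes lie on distinct lines (using the no-double-point assumption to assign each node a well-defined line) and that the count condition survives; the inductive hypothesis then pairs the rest, completing the argument.
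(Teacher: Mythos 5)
Your proof is correct and follows exactly the route the paper indicates: the paper gives no written argument for Lemma~\ref{lemA0} beyond the remark that it follows ``by using induction on $m$'', and your induction (pairing a node of a maximally occupied line $\ell_{\max}$ with a node of a different line, with a case split guaranteeing that every line keeps at most $m-1$ nodes) is a sound realization of that plan, including the pigeonhole argument for necessity. The only cosmetic point is that your two cases do not literally cover the situation where the maximum $t$ is strictly less than $m$ and is attained by several lines; but there every line already has at most $m-1$ nodes before removal, so the same greedy step goes through and nothing breaks.
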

Thus the above mentioned division of the node set $\bar{\mathcal A}$ into $n-k-2$ pairs is possible if and only if no $n-k-1$ nodes of $\bar{\mathcal A}_0:=\mathcal A\setminus \{A^*,A'\}$ are located in a line component of $\sigma_0,$ where the nodes $A^*$ and $A'$ are the nodes associated with the curve $\sigma^*$ and $\Aset_4,$ respectively. Observe also that we may associate any  two nodes $A^*$ and $A'$ of $\mathcal A$ with $\sigma^*$ and $\Aset_4,$

Now notice that, in view of $\#\bar\Aset=2(n-k-1),$ there can be at most two
undesirable line components for the set $\bar{\mathcal A},$ i.e., lines containing at least $n-k-1$ nodes from it. In this case a node from each line we assign as associated and leave in the two lines $\le n-k-2$ nodes.

Then assume that we have one
undesirable line component for the set $\bar{\mathcal A},$ containing  $\le n-k$ nodes from it.  In this case two nodes from this line we nominate as associated and leave in the line $\le n-k-2$ nodes.

Finally consider the case of one undesirable line component $\ell$ of $\sigma_0$ with $m \ge n-k+1$ nodes. We have that
$$\sigma_0=\ell r_\ell,\ \hbox{where}\ r_\ell\in\Pi_{k-1}.$$
 Now we are going to move $m-n+k$ nodes, one by one, from $\ell$ to the other component $r_\ell$ such that the set ${\mathcal Z}:={\mathcal X}\cup \mathcal A$ remains $n$-independent. Again, in view of Lemma \ref{lem44}, i), we require that each moved node belongs only to one component of the curve $\sigma_0.$

 To establish each described movement, in view of Lemma \ref{lem44}, ii), it suffices to prove that during this process each node $A\in\ell\cap \mathcal A,$ has no $n$-fundamental polynomial for which the curve $r_\ell$ is a component.  Suppose conversely that 
 \begin{equation}\label{pstar}
 p^\star_A=r_\ell s, \ s\in \Pi_{n-k+1}.
 \end{equation} Now, we have that $s$ vanishes at $\ge n-k$ nodes in $\ell\cap \mathcal A\setminus\{A\}.$ Indeed, the nodes of the set $\Aset$ in the line $\ell$ do not belong to another component. Therefore, $r_\ell$ does not vanish at these nodes and hence, in view of \eqref{pstar}, $s$ vanishes. According to the condition (C) $r_\ell$ does not vanish also at least at two nodes from $\ell\cap \mathcal X,$ and hence $s$ vanishes  there too. Thus the number of zeroes of $s$ in the line $\ell$ is greater or equal to $n-k+2$ and $s$ together with $p^\star_A$ vanishes at the whole line $\ell,$ including the node $A,$
which is a contradiction.

It remains to note that there will be no more undesirable lines, except $\ell,$ in the resulted set $\mathcal A,$ after the described movement of the nodes, since we finish by keeping exactly
$n-k$ nodes in $\ell\cap \mathcal A$ and outside of it there are only $n-k-2$ nodes.

\section{An application to bivariate  interpolation}

A $GC_n$ set ${\mathcal X}$ in the plane is an $n$-poised set of nodes, where the fundamental polynomial of each node is a product of $n$ linear factors.
The Gasca--Maeztu conjecture states that any $GC_n$-set possesses a subset of $n+1$ collinear nodes.

Recall that a node $A\in {\mathcal X}$ uses a line $\ell$ means that $\ell$ is a factor of the fundamental polynomial,  i.e., $p^\star_{A} = \ell r$ for some $r \in \Pi_{n-1}.$

It was proved by Carnicer and Gasca  in \cite{CG}, that any line passing through exactly $2$ nodes of a  $GC_n$-set ${\mathcal X}$ can be used at most by one
node from ${\mathcal X}.$
Next, it was proved in \cite{HT} that any used  line passing through exactly $3$ nodes of a $GC_n$-set ${\mathcal X}$ can be used either by exactly one
or three nodes from ${\mathcal X}.$
In \cite{HK} was proved that a line $\ell$ passing through exactly $4$ nodes can be used at most  by six nodes from ${\mathcal X}.$  Moreover, if it is used by at least four nodes then it is used by exactly six nodes from ${\mathcal X}.$

Below we consider the case of lines passing through exactly $5$ nodes.
\begin{cor}
Let ${\mathcal X}$ be an $n$-poised set of nodes and  $\ell$ be a line which passes through exactly $5$ nodes. Then $\ell$ can be used at most  by ten nodes from ${\mathcal X}.$ 
Moreover, if $\ell$ is used by at least seven nodes from ${\mathcal X}$ then it is used by exactly ten nodes from ${\mathcal X}.$ Furthermore, if it is
used by ten nodes, then they form a $3$-poised set.  In the latter case, if ${\mathcal X}$ is a $GC_n$ set then the ten nodes form a $GC_3$ set too.
\end{cor}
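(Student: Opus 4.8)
The plan is to reduce everything to the case of a line that is \emph{maximal} for a poised set of very low degree, where the count becomes transparent, and to let Theorem \ref{mainth} manufacture the maximal curve that drives this reduction. Write $\mathcal{V}:=\mathcal{X}\setminus\ell$ for the $N_n-5$ nodes off $\ell$, and let $t$ be the number of nodes using $\ell$. First I record the basic translation: a node $A$ uses $\ell$ if and only if $A\in\mathcal{V}$ and $A$ has an $(n-1)$-fundamental polynomial with respect to $\mathcal{V}$. Indeed $p^\star_A=\ell q_A$ forces $A\notin\ell$ and $q_A\in\Pi_{n-1}$ to vanish on $\mathcal{V}\setminus\{A\}$ with $q_A(A)\neq0$, while conversely any such $q_A$ produces the (unique) fundamental polynomial $\ell q_A$. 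Moreover $\mathcal{P}_{n-1,\mathcal{V}}=\{0\}$, since for $p\in\mathcal{P}_{n-1,\mathcal{V}}$ the product $\ell p$ vanishes on all of the $n$-poised set $\mathcal{X}$ and hence is $0$. Consequently, for any set $\mathcal{S}$ of $j$ nodes using $\ell$, the polynomials $q_A$ ($A\in\mathcal{S}$) are linearly independent, and since deleting one node raises $\dim\mathcal{P}_{n-1,\cdot}$ by at most one we get $\dim\mathcal{P}_{n-1,\mathcal{V}\setminus\mathcal{S}}=j$.

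The heart of the argument is the single implication ``$t\ge 7\Rightarrow t=10$''. It already yields the whole statement: the bound $t\le 10$ holds (if $t\ge 11$ then, as $11\ge 7$, the implication forces $t=10$, a contradiction), and it contains the dichotomy and the description of the extremal case. So assume $t\ge 7$ and fix seven using nodes $\mathcal{M}_7$. Then $\mathcal{X}':=\mathcal{V}\setminus\mathcal{M}_7$ is $n$-independent with $\#\mathcal{X}'=(N_n-5)-7=N_n-12=d(n,n-4)+3$, and by the preceding paragraph $\dim\mathcal{P}_{n-1,\mathcal{X}'}=7$. Applying Theorem \ref{mainth} with $k=n-1$ (so $k-3=n-4$), we land exactly in the extremal ``seven curves'' case, whence all but three nodes of $\mathcal{X}'$ lie on a maximal curve $\mu$ of degree $n-4$. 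Since $\mu$ carries $d(n,n-4)$ nodes of the $n$-independent set $\mathcal{X}$, it is maximal for $\mathcal{X}$ (Proposition \ref{maxcor}); and as $\mu$ contains no node of $\ell$, the $15=N_4$ nodes off $\mu$ consist of the five nodes on $\ell$ together with ten further nodes off $\ell$.

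Now I reduce by $\mu$: for $A$ off $\mu$ one has $p^\star_{A,\mathcal{X}}=\mu\,s_A$ with $s_A\in\Pi_4$ (Proposition \ref{maxcor}), so the fifteen off-$\mu$ nodes form a $4$-poised set $\mathcal{Y}$ in which $\ell$ is a \emph{maximal} line, because $d(4,1)=5$. For each node $A$ of $\mathcal{Y}$ lying off $\ell$, Proposition \ref{maxline} gives $\ell\mid s_A$, so all ten off-$\ell$ nodes of $\mathcal{Y}$ use $\ell$ in $\mathcal{X}$; hence $t\ge 10$. To obtain $t\le 10$ I must rule out that any $A_0\in\mathcal{X}\cap\mu$ uses $\ell$. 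If it did, its cofactor $g:=q_{A_0}\in\Pi_{n-1}$ would vanish on $(\mathcal{X}\cap\mu)\setminus\{A_0\}$ with $g(A_0)\neq0$; choosing a line $\ell'$ through $A_0$ that is not a component of $\mu$, the product $\ell' g\in\Pi_n$ vanishes on all of $\mathcal{X}\cap\mu$, so by maximality $\ell' g=\mu r$, which forces $\mu\mid g$ and the contradiction $g(A_0)=0$. Thus $t=10$, and the ten using nodes are precisely the off-$\ell$ part of $\mathcal{Y}$; applying the maximal-line reduction once more \emph{inside} $\mathcal{Y}$ shows they form a $3$-poised set ($N_3=10$).

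For the last assertion, in the $GC_n$ case the two reductions combine to $p^\star_{A,\mathcal{X}}=\mu\,\ell\,w_A$ for each of the ten nodes $A$, with $w_A\in\Pi_3$ its degree-$3$ fundamental polynomial with respect to the ten nodes; since $p^\star_{A,\mathcal{X}}$ is a product of $n$ lines, unique factorization and the degree count force $w_A$ to be a product of three lines, so the ten nodes form a $GC_3$ set. I expect the on-curve exclusion in the third paragraph (that no node of $\mathcal{X}\cap\mu$ can use $\ell$) to be the technically delicate point, together with the bookkeeping that legitimizes the appeal to Theorem \ref{mainth}: one needs $4\le k=n-1\le n-1$, so the main line of reasoning requires $n\ge 5$, and the residual small cases — where $\ell$ is already a maximal line of $\mathcal{X}$ — must be settled separately by the easy direct argument that then every off-$\ell$ node uses $\ell$.
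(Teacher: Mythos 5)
Your proposal is correct, and for the decisive upper bound it takes a genuinely different route from the paper. Both proofs open identically: seven users of $\ell$ give seven linearly independent cofactors in $\Pi_{n-1}$ vanishing on the $d(n,n-4)+3$ remaining nodes, Theorem \ref{mainth} (with $k=n-1$) produces the maximal curve $\mu$ of degree $n-4$, and Proposition \ref{maxcor} turns the fifteen off-$\mu$ nodes into the $N_4$-point configuration in which $\ell$ is a maximal line, whence all ten off-$\ell$, off-$\mu$ nodes use $\ell$ and (in the extremal case) form a $3$-poised, resp. $GC_3$, set. Where you diverge is in showing there is no eleventh user: the paper reruns the whole construction on $B_1,\dots,B_6,D$ to obtain a second maximal curve $\mu'$, and then invokes the uniqueness theorem (Theorem \ref{ht}, applied to the $d(n,n-5)+2$ common nodes) to force $\mu'=\mu$ and reach a contradiction. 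You instead observe that any further user would have to lie on $\mu$ itself, and kill that possibility directly: if $p^\star_{A_0}=\ell g$ with $A_0\in\mathcal X\cap\mu$, then $\ell' g$ (for a line $\ell'$ through $A_0$ not dividing $\mu$) vanishes on $\mathcal X\cap\mu$, so $\mu\mid g$ and $g(A_0)=0$, a contradiction. This is shorter, avoids Theorem \ref{ht} altogether, and makes the exact count $t=10$ fall out of a clean partition of $\mathcal X\setminus\ell$ into $\mathcal X\cap\mu$ (no users) and the ten off-$\mu$ nodes (all users). You are also more careful than the paper on two boundary points: you note that the appeal to Theorem \ref{mainth} needs $n\ge 5$ and dispose of $n=4$ separately (where $\ell$ is already maximal for $\mathcal X$), and your preliminary dimension bookkeeping ($\dim\mathcal P_{n-1,\mathcal V\setminus\mathcal S}=j$) makes explicit the linear independence that the paper leaves implicit. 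Both arguments share the same tacit step that $\mu$, already carrying $d(n,n-4)$ nodes of the $n$-independent set $\mathcal X$, can meet no node of $\ell$ or of the seven users; this follows from Proposition \ref{maxcurve}(i) and is at the same level of rigor in both texts.
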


\begin{proof}
Assume that $\ell\cap{\mathcal X}=\{A_1,\ldots,A_5\}=:\mathcal A.$ Assume also that the seven nodes in $\mathcal B:=\{B_1,\ldots,B_7\}\in {\mathcal X}$ use the line
$\ell:$
$p_{B_i}^{\star}=\ell \, q_i,\ i=1,\ldots,7,$
where $q_i \in \Pi_{n-1}.$

The polynomials $q_1,\ldots, q_7,$ vanish at $N-12$ nodes of the set ${\mathcal X}':={\mathcal X}\setminus(\mathcal A\cup\mathcal B).$ Hence through these $N-12=d(n, n-4)+3$ nodes pass seven linearly independent curves of degree $n-1.$
By Theorem \ref{mainth} there exists a maximal curve
$\mu$ of degree $n-4$ passing through $N-15$ nodes of ${\mathcal X}'$ and the remaining three nodes denoted by $C_1,C_2,C_3,$ are outside of it.
Now, according to Proposition \ref{maxcor}, the nodes $C_1,C_2,C_3,$ use $\mu:$
$p_{C_i}^{\star} = \mu r_i,\ r_i \in \Pi_4,\ i=1,2,3.$

These polynomials $r_i$ have to vanish at the five nodes of $\mathcal A\subset\ell.$
Hence $r_i=\ell\gamma_i,\ i=1,2,3,$ with $\gamma_i\in \Pi_3.$
Therefore, the nodes $C_1,C_2,C_3,$ use the line ${\ell}:$
$p_{C_i}^{\star} = \mu{\ell}\gamma_i,\ i=1,2,3.$
Hence if seven nodes in $\mathcal B\subset{\mathcal X}$ use the line ${\ell}$ then there exist three
more nodes $C_1,C_2,C_3\in{\mathcal X}$ using it and all the nodes of ${\mathcal Y}:={\mathcal X}\setminus (\mathcal A\cup\mathcal B\cup\{C_1,C_2,C_3\})$ lie in a
maximal curve $\mu$ of degree $n-4:$
\begin{equation}\label{D}{\mathcal Y}\subset\mu.\end{equation}

Next, let us show that there is no
eleventh node using ${\ell}$. Assume conversely that except of the ten nodes in $\mathcal S:=\{B_1,\ldots,B_7, C_1,C_2,C_3\},$ there is an eleventh node $D$ using ${\ell}.$ Of course we have that
$D\in{\mathcal Y}.$

Then we have that the seven nodes $B_1,\ldots,B_6$ and $D$ are using $\ell$ therefore, as was proved above, there exist three
more nodes $E_1,E_2,E_3\in{\mathcal X}$  (which may coincide or not with $B_7$ or $C_1,C_2,C_3$) using it and all the nodes of ${\mathcal Y}':={\mathcal X}\setminus (\mathcal A\cup
\{B_1,\ldots,B_6,D, E_1,E_2,E_3\})$ lie in a
maximal curve $\mu'$ of degree $n-4.$

We have also that
\begin{equation}\label{E}
p_{D}^{\star} = \mu' q', \ q' \in \Pi_4.
\end{equation}

Now, notice that both the curves $\mu$ and $\mu'$ pass through all the nodes of the set
${\mathcal Z}:={\mathcal X}\setminus (\mathcal A\cup\mathcal B\cup\{C_1,C_2,C_3,D,E_1,E_2,E_3\})$ with $\#{\mathcal Z}\ge N-19.$

Then, we get from Theorem \ref{ht}, with $k=n-5$, that $N-19 = d(n, n-5)+2$ nodes determine
the curve of degree $n-4$ passing through them uniquely. Thus $\mu$ and $\mu'$ coincide.

Therefore, in view of \eqref{D} and \eqref{E}, $p_{D}^{\star}$ vanishes at all the nodes of ${\mathcal Y}$,
which is a contradiction since $D\in{\mathcal Y}.$

Now, let us verify the ``Moreover" statement. Suppose ten nodes in $\mathcal S\subset{\mathcal X}$ use the line ${\ell}.$ Then, as we obtained earlier, the nodes ${\mathcal Y}:={\mathcal X}\setminus (\mathcal A\cup\mathcal S)$ are located in a
maximal curve $\mu$ of degree $n-4.$ Therefore the fundamental polynomial of each $A\in \mathcal S$ uses $\mu$ and hence $\ell:$
$$\ p^\star_A=\mu \ell q_{A},\ \hbox{where}\ q_{A}\in\Pi_3.$$
It is easily seen that $q_{A}$ is a $3$-fundamental polynomial of $A\in \mathcal S.$
\end{proof}

\vskip 12pt


{\def\section*#1{}
\vskip 5pt
\begin{center}
{\bf REFERENCES}
\end{center}


\end{document}